\newtheorem{theorem}{Theorem}[section]
\newtheorem{lemma}[theorem]{Lemma}
\newtheorem{corollary}[theorem]{Corollary}
\newtheorem{assumption}[theorem]{Assumption}
\newtheorem{myalg}{Algorithm}
\newcommand{\sign}{\text{sign}}
\def \R{\mathbb{R}}
\def \N{\mathbb{N}}
\def \eps{\varepsilon}
\def \Uad{{U_{\mathrm{ad}}}}
\def \tin{{\mathrm{in}}}
\def \tex{{\mathrm{ex}}}
\DeclareMathOperator*{\argmin}{arg\,min}
\numberwithin{equation}{section}
\begin{document}

\title{An inexact iterative Bregman method for optimal control problems\footnote{This work was funded by German Research Foundation DFG under project grant Wa 3626/1-1.}}

\author{Frank P\"orner\footnote{Department of Mathematics,  University of W\"urzburg, Emil-Fischer-Str. 40, 97074 W\"urzburg, Germany, E-mail: frank.poerner@mathematik.uni-wuerzburg.de}
}
\date{\today}
\maketitle

\begin{abstract}
In this article we investigate an inexact iterative regularization method based on generalized Bregman distances of an optimal control problem with control constraints.  We show robustness and convergence of the inexact Bregman method under a regularity assumption, which is a combination of a source condition and a regularity assumption on the active sets.  {We also take the discretization error into account.} Numerical results are presented to demonstrate the algorithm.\\
 
\bigskip
\textbf{AMS Subject Classification:} 49N45, 49M30, 65K10

\bigskip
\textbf{Keywords}: optimal control, source condition, Bregman distance, inexact Bregman method
\end{abstract}

\section{Introduction}
We consider an optimization problem of the following form:
\begin{equation}\label{eq:main_problem}\tag{$\textbf{P}$}
 \begin{split}
    \text{Minimize} &\quad \frac{1}{2}\|Su - z\|_Y^2  \\
    \text{such that} &\quad u_a \leq u \leq u_b \quad \text{a.e. in } \Omega.
\end{split}
\end{equation}
Here $\Omega \subseteq \R^n$, $n \geq 1$ is a bounded, measurable set, $Y$ a Hilbert space and $z \in Y$ a given function. The operator $S: L^2(\Omega) \to Y$ is supposed to be linear and continuous and inequality constraints are prescribed on the set $\Omega$. Here, we have in mind to choose $S$ as the solution operator of a linear partial differential equation. The special case where $y=Su$ is defined as the solution of
\begin{equation*}
\begin{alignedat}{3}
 {-}\Delta y &= u \;\; &&\text{in } \Omega\\
 y&=0 &&\text{on } \partial \Omega.
\end{alignedat}
\end{equation*}
will be treated in detail in section \ref{sec:numerics}.

A well-known method to solve \eqref{eq:main_problem} is the proximal point method (PPM) introduced by Martinet \cite{Martinet1970} and developed by Rockafellar \cite{Rockafellar1976}. This method is also known as iterated Tikhonov regularization, see \cite{engl1996,hankegroetsch98, obradovic2015}. The PPM is an iterative method, and the next iterate $u_{k+1}$ is given as the solution of
\begin{equation*}
 \begin{split}
    \text{Minimize} &\quad \frac{1}{2}\|Su - z\|_Y^2  + \alpha_{k+1}\|u-u_k\|_{L^2(\Omega)}^2\\
    \text{such that} &\quad u_a \leq u \leq u_b \quad \text{a.e. in } \Omega,
\end{split}
\end{equation*}
{with some given initial starting value $u_0$.} Here $(\alpha_k)_k$ is a sequence of non-negative real numbers. One can hope to obtain convergence without the additional requirement that the regularization parameters $\alpha_k$ tend to zero. Unfortunately this is not the case in general, since there exists a counter-example by G\"{u}ler \cite{gueler1991}. There only weak convergence is obtained. However this method is well understood, see e.g. \cite{tichatschke2000,tichatschke1998,tichatschke1998b,kaplan1994stable,Solodov00} and the references therein.\\
For the PPM method it is interesting to investigate the robustness with respect to numerical errors. Denote by
\[
\mathcal{P} u_k:= \argmin\limits_{{u \in \Uad}} \frac{1}{2}\|Su - z\|_Y^2  + \alpha_{k+1}\|u-u_k\|_{L^2(\Omega)}^2
\]
the exact solution, which in general cannot be computed exactly. {Since $\alpha_{k+1} > 0$ it is clear that this problem has a unique solution.} Due to numerical errors we only obtain an approximate solution $u_{k+1}$ which satisfies $\|u_{k+1} - \mathcal{P} u_k\|_{L^2(\Omega)} \leq \eps_k$. The sequence $(\eps_k)_k$ can be interpreted as the accuracy of the computed solution. One can hope to achieve convergence of the sequence $(u_k)_k$ if $(\eps_k)_k$ is chosen appropriately. The iterates generated by the proximal point method converge weakly to a solution of \eqref{eq:main_problem} if the condition
\begin{equation}\label{eq:ppm_numerror}
\sum\limits_{j=1}^\infty \frac{\eps_j}{\alpha_j} < \infty
\end{equation}
holds, see \cite{tichatschke1998,Rockafellar1976}. If the state $z$ is not attainable, i.e. there exists no feasible control $u \in \Uad$ such that $Su=z$ holds, the optimal solution might be bang-bang. {Here $\Uad$ is the set of all feasible controls 
\[
		\Uad := \{u \in L^2(\Omega): \; u_a \leq u \leq u_b\}.
\]}
This means it is a linear combination of characteristic functions. Hence the solution may not be in $H^1(\Omega)$ and it is unlikely that a source condition holds in this case, see \cite{wachsmuth2011b}.\\
To handle this non-{attainability} we considered in \cite{wachsmuth2016} an iterative method based on generalized Bregman distances. There, the iterate $u_{k+1}$ is given by the solution of
\begin{equation}\label{eq:intro_1}
\text{Minimize} \quad \frac{1}{2}\|Su-z\|_Y^2 + \alpha_{k+1}  D^{\lambda_{k}}(u,u_{k}),
\end{equation}
where $D^\lambda(u,v) := J(u) - J(v) - (u-v,\lambda)$ is called the (generalized) Bregman distance \cite{Bregman67} associated with a regularization function $J$ with subgradient $\lambda \in \partial J(v)$. Here we have additional freedom in choosing the regularization function $J$. This method was first applied to an image restoration problem, where $J$ was chosen to be the total variation, see \cite{burger2007,osher2005}. Our approach was to incorporate the control constraints into the regularization functional, resulting in
\[
J(u) := \frac{1}{2}\|u\|^2 + I_\Uad(u).
\]
Here $I$ is the indicator function from convex analysis. This choice allowed us to prove strong convergence under a suitable regularity assumption, which allows bang-bang structure and non-attainability, see \cite{wachsmuth2016}. In the case of noisy data $\|z-z^\delta\|\leq \delta$ we established an a-priori stopping rule in \cite{poerner2016b}. 

The aim of this paper is to analyse the robustness of the iterative method presented in \cite{wachsmuth2016} with respect to numerical errors. {We replace the operator $S$ in \eqref{eq:intro_1} by a linear and continuous operator $S_h$ with finite-dimensional range $Y_h\subset Y$. This makes the problem \eqref{eq:intro_1} numerically solvable, but introduces an additional discretization error. If $S$ is the solution operator of a linear elliptic partial differential equation and $Y_h$ is spanned by linear finite elements, then this can be interpreted as the variational discretization in the sense of Hinze \cite{Hinze2005}. }

We aim to establish sufficient conditions on the sequence $(\eps_k)_k$ comparable to \eqref{eq:ppm_numerror}, to ensure convergence. \\

This paper is structured as follows. In section \ref{sec:assumptions} we recall our iterative method, our regularity assumption and some convergence results. {The operator $S_h$ is then introduces in section \ref{section:descr_problem}.} Furthermore we present an a-posteriori error estimator for the {discretized} subproblem, which allows to control the accuracy of the iterates. In section \ref{sec:inexact_bregman} we establish our inexact Bregman iteration and show robustness and convergence results under the presence of numerical errors using our regularity assumption. As an example we consider in section \ref{sec:numerics} the optimal control of the heat equation. {We construct the operator $S_h$ and show its properties.}  Furthermore numerical results are presented for a bang-bang example. Finally conclusions are drawn in section \ref{sec:conclusion}.

\paragraph*{Notation.}
For elements $q \in L^2(\Omega)$, we denote the $L^2$-Norm by $\|q\| := \|q\|_{L^2(\Omega)}$. Furthermore $c$ is a generic constant, which may change from line to line, but is independent from the important variables, e.g. $k$.

\section{Assumptions and preliminary results}\label{sec:assumptions}
Let $\Omega \subseteq \R^n$, $n \in \N$ be a bounded, measurable domain, $Y$ a Hilbert space, $S: L^2(\Omega) \to Y$ linear and continuous. We are interested in the solution to problem \eqref{eq:main_problem}. Here we assume $z \in Y$ and $u_a, u_b \in L^\infty(\Omega)$ such that $u_a \leq u_b$. Hence the set of admissible controls {$\Uad$} is non-empty. By
\[
H(u) := \frac{1}{2}\|Su - z\|^{\mathbin{2}}
\]
we will denote our functional to be minimized.

\subsection{Existence of solutions}
Using classical arguments we can deduce existence of solutions.

\begin{theorem}
Under the assumptions listed above the problem \eqref{eq:main_problem} has a solution. If the operator $S$ is injective the solution is unique.
\end{theorem}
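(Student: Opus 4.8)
The plan is to use the direct method of the calculus of variations. First I would note that $H(u)=\frac12\|Su-z\|^2\ge 0$ for every admissible $u$, so that $m:=\inf_{u\in\Uad}H(u)$ is a well-defined finite number, and I would pick a minimizing sequence $(u_k)_k\subset\Uad$ with $H(u_k)\to m$. Since $u_a,u_b\in L^\infty(\Omega)$ and $\Omega$ is bounded, hence of finite measure, the admissible set $\Uad$ is a bounded subset of $L^2(\Omega)$; it is moreover convex and strongly closed, therefore weakly sequentially closed. Consequently $(u_k)_k$ admits a subsequence, not relabeled, converging weakly in $L^2(\Omega)$ to some limit $\bar u\in\Uad$.

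Next I would pass to the limit. As $S$ is linear and continuous, the affine map $u\mapsto Su-z$ is weakly continuous from $L^2(\Omega)$ to $Y$, and $v\mapsto\frac12\|v\|_Y^2$ is convex and continuous, hence weakly lower semicontinuous on $Y$; the composition $H$ is therefore weakly lower semicontinuous on $L^2(\Omega)$ (equivalently, $H$ is convex and continuous, hence weakly lsc). This gives $H(\bar u)\le\liminf_{k}H(u_k)=m$, and since $\bar u\in\Uad$ we conclude $H(\bar u)=m$, i.e.\ $\bar u$ is a solution of \eqref{eq:main_problem}.

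For the uniqueness claim, suppose $S$ is injective. Then $u\mapsto\|Su-z\|_Y^2$ is strictly convex on $L^2(\Omega)$: for $u_1\neq u_2$ one has $Su_1\neq Su_2$, and the squared Hilbert-space norm is strictly convex along the segment joining $Su_1$ and $Su_2$, which transfers the strict inequality back to $H$. If $\bar u_1\neq\bar u_2$ were two minimizers, then $\tfrac12(\bar u_1+\bar u_2)\in\Uad$ by convexity of $\Uad$, and strict convexity would force $H\bigl(\tfrac12(\bar u_1+\bar u_2)\bigr)<\tfrac12 H(\bar u_1)+\tfrac12 H(\bar u_2)=m$, contradicting the definition of $m$. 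Hence the minimizer is unique.

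The whole argument is classical; the only points requiring (routine) verification are the $L^2$-boundedness and weak sequential closedness of $\Uad$ and the weak lower semicontinuity of $H$, which together make the direct method applicable. I do not expect any genuine obstacle here.
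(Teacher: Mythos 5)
Your proof is correct and is precisely the classical direct-method argument that the paper invokes without spelling out ($\Uad$ bounded, closed, convex, hence weakly sequentially compact; $H$ convex and continuous, hence weakly lower semicontinuous; strict convexity via injectivity of $S$ for uniqueness). No gaps; all the routine verifications you list go through exactly as you describe.
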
 

Let $u^\dagger \in \Uad$ denote a solution of \eqref{eq:main_problem} with state $y^\dagger := Su^\dagger$ and adjoint state $p^\dagger := S^\ast(z-Su^\dagger)$. Note that due to the strict convexity of $H$ {with respect to $Su$} the optimal state $y^\dagger$ is uniquely defined. We now have the following result, see also \cite{wachsmuth2016}.

\begin{theorem}\label{thm:opt_udagger}
We have the relation {for almost all $ x\in \Omega$}
$$u^\dagger (x)  \begin{cases}
= u_a(x) & \text{if} \quad p^\dagger(x) < 0, \\
\in \left[u_a(x), u_b(x)\right] & \text{if} \quad p^\dagger(x) = 0, \\
= u_b & \text{if} \quad p^\dagger(x) > 0, \\
\end{cases}$$
and the following variational inequality holds:
$$(-p^\dagger, u - u^\dagger) \geq 0 \quad \forall u \in \Uad.$$
\end{theorem}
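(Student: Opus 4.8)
The plan is to derive the stated characterization of $u^\dagger$ from the first-order optimality conditions of the convex problem \eqref{eq:main_problem}. Since $H(u) = \frac{1}{2}\|Su - z\|^2$ is convex and Fréchet differentiable with derivative $H'(u) = S^\ast(Su - z)$, and $\Uad$ is a nonempty closed convex subset of $L^2(\Omega)$, a standard result of convex optimization says that $u^\dagger \in \Uad$ is a minimizer if and only if the variational inequality
\[
(H'(u^\dagger), u - u^\dagger) \geq 0 \quad \forall u \in \Uad
\]
holds. First I would substitute $H'(u^\dagger) = S^\ast(Su^\dagger - z) = -p^\dagger$ (recalling the definition $p^\dagger = S^\ast(z - Su^\dagger)$), which immediately yields the claimed variational inequality $(-p^\dagger, u - u^\dagger) \geq 0$ for all $u \in \Uad$.

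The second step is to localize this inequality to obtain the pointwise case distinction. This is the classical argument that a box-constrained variational inequality is equivalent to a pointwise condition: since $\Uad = \{u \in L^2(\Omega) : u_a \leq u \leq u_b \text{ a.e.}\}$ is defined by pointwise constraints, the global inequality $(-p^\dagger, u - u^\dagger) \geq 0$ decouples into the pointwise statement that for almost every $x \in \Omega$, the scalar $u^\dagger(x)$ minimizes $t \mapsto -p^\dagger(x)\, t$ over $t \in [u_a(x), u_b(x)]$. I would make this precise by the usual localization trick: for a measurable set $A \subseteq \Omega$ and an admissible perturbation supported on $A$, test the inequality and let $A$ shrink, using a Lebesgue point / density argument (or, more simply, testing with $u = u^\dagger + \chi_A(v - u^\dagger)$ for arbitrary $v \in \Uad$ and arbitrary measurable $A$). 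Minimizing the linear function $t \mapsto -p^\dagger(x) t$ over an interval gives exactly: $t = u_a(x)$ when $-p^\dagger(x) > 0$, i.e. $p^\dagger(x) < 0$; $t = u_b(x)$ when $p^\dagger(x) > 0$; and any $t \in [u_a(x), u_b(x)]$ when $p^\dagger(x) = 0$. This is precisely the asserted case distinction.

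The only mild technical obstacle is the rigorous passage from the integrated (global) variational inequality to the pointwise one — one must be careful that the exceptional null set is independent of the test function, which is handled by working with a countable dense family of test controls in $\Uad$ or by the standard measurable-selection/localization argument; none of this is deep, and it is entirely routine for box constraints in $L^2$. Everything else is a direct substitution. I would therefore keep the write-up short: state the convex first-order condition, plug in $-p^\dagger$, and invoke the standard pointwise reformulation of box-constrained variational inequalities, citing \cite{wachsmuth2016} for the details as the excerpt already does.
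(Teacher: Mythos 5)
Your proposal is correct and follows the standard route that the paper itself defers to (it cites \cite{wachsmuth2016} rather than proving the statement): derive the variational inequality $(H'(u^\dagger),u-u^\dagger)\ge 0$ from convexity and differentiability of $H$ over the closed convex set $\Uad$, identify $H'(u^\dagger)=-p^\dagger$, and localize the box-constrained inequality pointwise. Nothing is missing; the localization step you flag is indeed the only technical point and is handled exactly as you describe.
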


\subsection{Bregman iteration}
 In \cite{wachsmuth2016} we started to investigate an iterative method to solve \eqref{eq:main_problem} based on generalized Bregman distances. The Bregman distance \cite{Bregman67} $D^\lambda$ for a regularization functional $J$ at $u,v \in L^2(\Omega)$ is given by
$$D^\lambda(u,v) := J(u) - J(v) - (u-v, \lambda)$$
where $\lambda \in \partial J(v)$. We incorporate the control constraints into the regularization functional
$$J: L^2(\Omega) \to  \R \cup \{-\infty, +\infty\}, \quad J(u) := \frac{1}{2}\|u\|^2 + I_\Uad(u).$$
Let us recall some important properties of the regularization functional and the Bregman distance. {The next result can also be found in \cite[Lemma 2.3]{wachsmuth2016}.  }
\begin{lemma}
Let $C \subseteq L^2(\Omega)$ be non-empty, closed, and convex. The functional
$$J: L^2(\Omega) \to \mathbb{R}\cup \{ + \infty\}, \quad u \mapsto \frac{1}{2}\|u\|^2 + I_C(u)$$
is convex and nonnegative. Furthermore the Bregman distance
$$D^\lambda(u,v) := J(u) - J(v) - (u-v, \lambda), \quad \lambda \in \partial J(v)$$
is nonnegative and convex with respect to $u$.
\end{lemma}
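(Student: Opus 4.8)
The plan is to verify each assertion in the statement directly from the definition of $J$ and elementary convex analysis, treating the three claims — convexity of $J$, nonnegativity of $J$, and nonnegativity plus convexity (in $u$) of $D^\lambda$ — in turn. First I would establish convexity of $J$: the map $u \mapsto \tfrac12\|u\|^2$ is convex (indeed strongly convex) on $L^2(\Omega)$, and $I_C$ is convex precisely because $C$ is convex, so $J$ is a sum of two convex functions and hence convex. Nonnegativity is immediate since $\tfrac12\|u\|^2 \geq 0$ for all $u$ and $I_C(u) \in \{0,+\infty\}$, so $J(u) \geq 0$ (with $J(u) = +\infty$ allowed, matching the stated codomain $\mathbb{R}\cup\{+\infty\}$); I would also note $J$ is proper since $C$ is non-empty.

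Next I would turn to the Bregman distance. For nonnegativity, fix $v \in L^2(\Omega)$ with $\partial J(v) \neq \emptyset$ and $\lambda \in \partial J(v)$. By the very definition of the subdifferential of the convex function $J$, we have $J(u) \geq J(v) + (u - v, \lambda)$ for all $u \in L^2(\Omega)$, which rearranges to $D^\lambda(u,v) = J(u) - J(v) - (u-v,\lambda) \geq 0$. For convexity of $D^\lambda(\cdot,v)$ in its first argument: with $v$ and $\lambda$ fixed, the terms $-J(v)$ and $(v,\lambda)$ are constants, and $u \mapsto -(u,\lambda)$ is linear, hence convex; adding the convex function $J(u)$ gives that $u \mapsto D^\lambda(u,v)$ is convex as a sum of a convex and an affine function. (One could even observe it is strongly convex with the same modulus as $J$, though that is not asked.)

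The argument is essentially routine and I do not anticipate a genuine obstacle; the only point requiring a modicum of care is making sure the subdifferential of $J$ is non-empty at the points $v$ where the Bregman distance is used — but this is implicit in the statement, since $D^\lambda(u,v)$ is only defined for $\lambda \in \partial J(v)$, so we may simply assume such a $\lambda$ exists. It is worth remarking that $\partial J$ is in fact non-trivial on all of $C$: writing $J = \tfrac12\|\cdot\|^2 + I_C$, one has by the sum rule (valid here because $\tfrac12\|\cdot\|^2$ is continuous) $\partial J(v) = v + \partial I_C(v) = v + N_C(v)$, the normal cone to $C$ at $v$, which is non-empty for every $v \in C$ (it contains $0$). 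I would include this observation as a short closing remark so that later applications of the lemma — where $C = \Uad$ and $v$ is a previous iterate lying in $\Uad$ — are unambiguously covered.
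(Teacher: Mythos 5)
Your proof is correct and is the standard argument; the paper itself gives no proof here but defers to \cite[Lemma 2.3]{wachsmuth2016}, whose reasoning (sum of convex functions, subgradient inequality for nonnegativity of $D^\lambda$, and affineness of the remaining terms in $u$ for convexity) is exactly what you wrote. Your closing remark that $\partial J(v) = v + N_C(v) \neq \emptyset$ for $v \in C$ is a sensible addition that the application to Algorithm \ref{alg:MinEx} implicitly relies on.
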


{In the following we define $P_U$ to be the $L^2$-projection onto the set $U$.} Our algorithm is now given by: (see \cite{wachsmuth2016, burger2007})  

{
	\begin{myalg}\label{alg:MinEx}
		Let $u_0 = P_\Uad(0)  \in \Uad$, $\lambda_0=0  \in \partial J(u_0)$ and $k=1$.
		\begin{enumerate}
			\item Solve for $u_k$: \label{a0_start}
			\begin{equation*}
				\text{Minimize} \quad \frac{1}{2}\|Su-z\|_Y^2 + \alpha_{k}  D^{\lambda_{k-1}}(u,u_{k-1}).
			\end{equation*}
			\item Choose $\lambda_k \in \partial J(u_k)$.
			\item Set $k:=k+1$, go back to \ref{a0_start}.
		\end{enumerate}
	\end{myalg}
}

Here $(\alpha_k)_k$ is a bounded sequence of non-negative real numbers. In the next theorems we summarize some properties of the algorithm. The proofs can be found in \cite{wachsmuth2016}. In the following we use the abbreviation
$$\gamma_k := \sum\limits_{j=1}^k \frac{1}{\alpha_j}.$$

Let us first recall a convergence result in terms of the functional $H$.
\begin{theorem}
Algorithm \ref{alg:MinEx} is well-posed and we have $\lambda_k \in \partial J(u_k)$ for all $k \in \N_0$. Let $u^\dagger$ be a solution of \eqref{eq:main_problem}. We then have
\begin{align*}
H(u_k) &\leq H(u_{k-1}),\\
|H(u_k) - H(u^\dagger)| &= \mathcal{O}\left( \gamma_k^{-1} \right).
\end{align*}
Furthermore we have the monotonicity property of the sequence $(u_k)_k$ with respect to the Bregman distance
$$D^{\lambda_k}(u^\dagger , u_k) \leq D^{\lambda_{k-1}}(u^\dagger , u_{k-1})$$
and
$$\sum\limits_{i=1}^\infty D^{\lambda_{i-1}}(u_i,u_{i-1}) < \infty.$$
\end{theorem}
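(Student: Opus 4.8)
The plan is to establish the five assertions in the order stated, using standard convex analysis together with a single telescoping identity for the Bregman distance. For well-posedness of the subproblem in step~\ref{a0_start} of Algorithm~\ref{alg:MinEx}, note that its objective equals, up to an additive constant independent of $u$,
\[
\tfrac12\|Su-z\|_Y^2 + \tfrac{\alpha_k}{2}\|u\|^2 - \alpha_k(u,\lambda_{k-1}) + I_\Uad(u),
\]
which is proper (since $\Uad\neq\emptyset$), weakly lower semicontinuous, and — as $\alpha_k>0$ — strongly convex, hence coercive on $L^2(\Omega)$; the direct method provides a minimizer $u_k$, and strict convexity makes it unique, so in particular $u_k\in\Uad$. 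Writing the first-order optimality condition (the subdifferential sum rule applies because $H$ is finite-valued and continuous, with $\partial H(u)=\{S^\ast(Su-z)\}$) gives $0\in S^\ast(Su_k-z)+\alpha_k\big(\partial J(u_k)-\lambda_{k-1}\big)$, so that
\[
\lambda_k:=\lambda_{k-1}+\tfrac1{\alpha_k}S^\ast(z-Su_k)\in\partial J(u_k)
\]
is an admissible choice in step~2. For the induction base $\lambda_0=0\in\partial J(u_0)$ one uses $\partial J(u_0)=u_0+N_\Uad(u_0)$, so that this is equivalent to $-u_0\in N_\Uad(u_0)$, i.e.\ to $u_0=P_\Uad(0)$, which holds by construction. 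Hence $\lambda_k\in\partial J(u_k)$ for all $k$, and unrolling the recursion yields $\lambda_k=\sum_{j=1}^k\frac1{\alpha_j}S^\ast(z-Su_j)$.

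Monotonicity of $H$ follows by testing the subproblem at $u=u_{k-1}$: since $D^{\lambda_{k-1}}(u_{k-1},u_{k-1})=0$ we obtain $H(u_k)+\alpha_k D^{\lambda_{k-1}}(u_k,u_{k-1})\le H(u_{k-1})$, and nonnegativity of the Bregman distance gives $H(u_k)\le H(u_{k-1})$. For the Bregman monotonicity I would use the elementary identity, valid for any $\lambda_{k-1}\in\partial J(u_{k-1})$ and $\lambda_k\in\partial J(u_k)$,
\[
D^{\lambda_{k-1}}(u^\dagger,u_{k-1})-D^{\lambda_k}(u^\dagger,u_k)=D^{\lambda_{k-1}}(u_k,u_{k-1})+(u^\dagger-u_k,\ \lambda_k-\lambda_{k-1}),
\]
obtained by expanding the definitions. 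Inserting $\lambda_k-\lambda_{k-1}=\frac1{\alpha_k}S^\ast(z-Su_k)$ and using the algebraic relation $(Su^\dagger-Su_k,\,z-Su_k)=H(u_k)-H(u^\dagger)+\tfrac12\|Su^\dagger-Su_k\|_Y^2$, the last inner product becomes $\frac1{\alpha_k}\big(H(u_k)-H(u^\dagger)+\tfrac12\|Su^\dagger-Su_k\|_Y^2\big)$. Since $u^\dagger$ solves \eqref{eq:main_problem} and $u_k\in\Uad$, we have $H(u_k)\ge H(u^\dagger)$, so the whole right-hand side is nonnegative, giving $D^{\lambda_k}(u^\dagger,u_k)\le D^{\lambda_{k-1}}(u^\dagger,u_{k-1})$.

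Summing the last displayed identity over $k=1,\dots,K$ telescopes to
\[
\sum_{k=1}^K D^{\lambda_{k-1}}(u_k,u_{k-1})+\sum_{k=1}^K\tfrac1{\alpha_k}\big(H(u_k)-H(u^\dagger)\big)\le D^{\lambda_0}(u^\dagger,u_0)-D^{\lambda_K}(u^\dagger,u_K)\le D^{\lambda_0}(u^\dagger,u_0)<\infty,
\]
where all summands are nonnegative; this yields the stated summability $\sum_{i\ge1}D^{\lambda_{i-1}}(u_i,u_{i-1})<\infty$. Finally, combining $\sum_{k=1}^K\frac1{\alpha_k}(H(u_k)-H(u^\dagger))\le D^{\lambda_0}(u^\dagger,u_0)$ with the monotonicity $H(u_K)\le H(u_k)$ for $k\le K$ gives $(H(u_K)-H(u^\dagger))\,\gamma_K\le D^{\lambda_0}(u^\dagger,u_0)$, hence $0\le H(u_K)-H(u^\dagger)=\mathcal{O}(\gamma_K^{-1})$. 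The only steps requiring real care are the passage from the optimality condition to the explicit subgradient update (justifying the sum rule and verifying the base case $0\in\partial J(u_0)$ via the projection characterization) and the bookkeeping in the telescoping identity — specifically recognizing that the cross term reorganizes into $H(u_k)-H(u^\dagger)$ plus a square, which is exactly what forces nonnegativity of the right-hand side; I do not expect a genuine obstacle beyond this.
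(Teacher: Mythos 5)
Your proposal is correct and follows essentially the same route as the proof the paper defers to (\cite{wachsmuth2016}, in the Burger--Osher style): the explicit subgradient update $\lambda_k=\lambda_{k-1}+\alpha_k^{-1}S^\ast(z-Su_k)$ from the first-order conditions, the telescoping Bregman identity whose cross term reorganizes into $\alpha_k^{-1}\bigl(H(u_k)-H(u^\dagger)+\tfrac12\|S(u^\dagger-u_k)\|_Y^2\bigr)$, and the resulting summability and $\mathcal{O}(\gamma_k^{-1})$ rate. All the individual steps (well-posedness via strong convexity, the base case $0\in\partial J(u_0)$ via the projection characterization, and the algebraic identities) check out.
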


We also established a general convergence result in terms of the controls.
\begin{theorem}
Weak limit points of the sequence $(u_k)_k$ generated by Algorithm \ref{alg:MinEx} are solutions to the problem \eqref{eq:main_problem}. Furthermore we obtain strong convergence of the states
$$Su_k \to y^\dagger,$$
where $y^\dagger$ is the uniquely determined optimal state of \eqref{eq:main_problem}. If in addition $u^\dagger$ is the unique solution of \eqref{eq:main_problem}, we obtain
$u_k \to u^\dagger.$
\end{theorem}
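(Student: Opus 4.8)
The plan is the usual direct-method argument together with a Fej\'er-type monotonicity of the Bregman distance, using the convergence statements recalled above as black boxes. First, since $u_a,u_b\in L^\infty(\Omega)$ and $\Omega$ is bounded, $\Uad$ is bounded (even in $L^\infty(\Omega)$), so $(u_k)$ and $(Su_k)$ are bounded in $L^2(\Omega)$ and $Y$ and possess weakly convergent subsequences; and because $(\alpha_k)$ is bounded we have $\gamma_k\to\infty$, so $|H(u_k)-H(u^\dagger)|=\mathcal{O}(\gamma_k^{-1})$ forces $H(u_k)\to H(u^\dagger)$ and in particular $\|Su_k-z\|\to\|y^\dagger-z\|$. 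If now $u_{k_j}\rightharpoonup\bar u$, then $\bar u\in\Uad$ (the set is convex and closed, hence weakly closed), and weak lower semicontinuity of the convex continuous functional $H$ gives $H(\bar u)\le\liminf_j H(u_{k_j})=H(u^\dagger)$; since $u^\dagger$ is a minimizer and $\bar u$ is feasible, $\bar u$ is a solution. This proves the first assertion.

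For the states, recall that the optimal state $y^\dagger$ is unique, so by the previous paragraph and the weak continuity of $S$ every weak limit point of the bounded sequence $(Su_k)\subset Y$ equals $y^\dagger$; hence $Su_k\rightharpoonup y^\dagger$. Combining this with $\|Su_k-z\|\to\|y^\dagger-z\|$ and expanding $\|Su_k-y^\dagger\|^2=\|Su_k-z\|^2-2(Su_k-z,\,y^\dagger-z)+\|y^\dagger-z\|^2$ yields $Su_k\to y^\dagger$ strongly. If moreover $u^\dagger$ is the unique solution, then $(u_k)$ has $u^\dagger$ as its only weak limit point, so $u_k\rightharpoonup u^\dagger$.

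It remains to upgrade this last weak convergence to a strong one. Here I would use that $J$ contains the quadratic term $\tfrac12\|\cdot\|^2$, so that $\lambda_k-u_k\in\partial I_\Uad(u_k)$ and consequently $\tfrac12\|u_k-u^\dagger\|^2\le D^{\lambda_k}(u^\dagger,u_k)$. The right-hand side is monotonically decreasing, hence convergent, and it suffices to show its limit is $0$. For that I would sum over $k$ the three-point identity
\[
D^{\lambda_k}(u^\dagger,u_k)-D^{\lambda_{k-1}}(u^\dagger,u_{k-1})+D^{\lambda_{k-1}}(u_k,u_{k-1})=\tfrac1{\alpha_k}\,(Su_k-y^\dagger,\,z-Su_k),
\]
which follows from the optimality relation $\lambda_k=\lambda_{k-1}+\tfrac1{\alpha_k}S^\ast(z-Su_k)$ for the subproblem, and use that, by Theorem \ref{thm:opt_udagger}, $(Su_k-y^\dagger,\,z-Su_k)=(u_k-u^\dagger,p^\dagger)-\|Su_k-y^\dagger\|^2\le0$. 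Together with $\sum_k D^{\lambda_{k-1}}(u_k,u_{k-1})<\infty$ and the rate $\|Su_k-y^\dagger\|^2\le 2\bigl(H(u_k)-H(u^\dagger)\bigr)=\mathcal{O}(\gamma_k^{-1})$, this should identify the value of the telescoped residual series and pin $D^{\lambda_k}(u^\dagger,u_k)\to0$, whence $u_k\to u^\dagger$ in $L^2(\Omega)$.

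The first two paragraphs are routine; the work is concentrated in the last step. The delicate point is precisely that $D^{\lambda_k}(u^\dagger,u_k)$, which we only know to be monotone, must actually tend to zero and not to a positive limit — this is where the structure of the iteration (the summed Bregman identity, the sign of the residual coming from the optimality system, the summability of $D^{\lambda_{k-1}}(u_k,u_{k-1})$, and the $\mathcal{O}(\gamma_k^{-1})$ decay of $H(u_k)-H(u^\dagger)$) has to be exploited, rather than soft functional-analytic arguments alone.
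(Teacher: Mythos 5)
Your first two paragraphs are correct and follow the standard route: boundedness of $\Uad$ and $\gamma_k\to\infty$ give $H(u_k)\to H(u^\dagger)$, weak closedness of $\Uad$ and weak lower semicontinuity of $H$ show that weak limit points solve \eqref{eq:main_problem}, uniqueness of the optimal state together with the norm convergence $\|Su_k-z\|_Y\to\|y^\dagger-z\|_Y$ upgrades $Su_k\rightharpoonup y^\dagger$ to strong convergence, and uniqueness of $u^\dagger$ yields $u_k\rightharpoonup u^\dagger$. The preparatory material of your last paragraph is also correct: $\lambda_k-u_k\in\partial I_\Uad(u_k)$ gives $\tfrac12\|u_k-u^\dagger\|^2\le D^{\lambda_k}(u^\dagger,u_k)$, the three-point identity holds for the choice $\lambda_k=\lambda_{k-1}+\alpha_k^{-1}S^\ast(z-Su_k)$, and its right-hand side is nonpositive by Theorem \ref{thm:opt_udagger}.

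The gap is the final step. Telescoping your identity gives
\[
D^{\lambda_N}(u^\dagger,u_N)=D^{\lambda_0}(u^\dagger,u_0)-\sum_{k=1}^N D^{\lambda_{k-1}}(u_k,u_{k-1})-\sum_{k=1}^N\frac{1}{\alpha_k}\Bigl(\|Su_k-y^\dagger\|_Y^2-(u_k-u^\dagger,p^\dagger)\Bigr),
\]
where both subtracted series have nonnegative terms. Since the left-hand side is nonnegative and monotone, this only shows that both series converge and that the limit $L:=\lim_N D^{\lambda_N}(u^\dagger,u_N)$ equals the initial Bregman distance minus their total mass; nothing in your ingredients forces that difference to be zero. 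Neither the summability of $D^{\lambda_{k-1}}(u_k,u_{k-1})$ nor the decay $\|Su_k-y^\dagger\|_Y^2=\mathcal O(\gamma_k^{-1})$ yields a \emph{lower} bound on the subtracted mass, so $L>0$ is fully consistent with everything you have derived — the identity cannot ``pin'' the limit to zero. This is the real obstruction, not a presentational one: to show $D^{\lambda_k}(u^\dagger,u_k)\to 0$ one must convert the pairing $(u^\dagger,u^\dagger-u_k)$ into state differences, which is exactly what Assumption \ref{ass:SC} (or \ref{ass:ActiveSet}) is for, and which is why strong $L^2$-convergence of the controls, with rates, appears only in Theorem \ref{thm:SC_strong_conv}. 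The convergence of the whole sequence asserted in the general theorem is the weak convergence $u_k\rightharpoonup u^\dagger$, which your second paragraph already establishes completely; expecting strong convergence without a regularity assumption runs against the same phenomenon as G\"uler's counterexample recalled in the introduction.
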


In order to establish convergence rates for the iterates of Algorithm \ref{alg:MinEx} we have to assume some regularity on the solution of \eqref{eq:main_problem}. A common assumption on a solution $u^\dagger$ is the following source condition, which is an abstract smoothness condition, see, e.g.,  \cite{burger2007,chaventkunisch94,itojin11,neubauer1988,wachsmuth2011,wachsmuth2011b}. We say $u^\dagger$ satisfies the source condition \ref{ass:SC} if the following assumption holds.

{
\renewcommand{\thetheorem}{\textbf{SC}}
\begin{assumption}[Source Condition]\label{ass:SC}
Let $u^\dagger$ be a solution of \eqref{eq:main_problem}.
Assume that there exists an element $w \in Y$ such that $u^\dagger = P_\Uad(S^\ast w)$ holds.
\end{assumption}
}

This assumption is too restrictive as in many cases the solution $u^\dagger$ is bang-bang, i.e. a linear combination of characteristic functions, hence discontinuous. But in many applications the range of $S^\ast$ contains $H^1(\Omega)$ or $C(\bar \Omega)$, hence the Assumption \ref{ass:SC} is not applicable in this case. To overcome this, we use the regularity of the adjoint state.  We say $u^\dagger$ satisfies the source condition \ref{ass:ActiveSet} if the following assumption holds. {In the following we define $\chi_A$ to be the indicator function of the set $A$.} Recall that the adjoint state is defined by $p^\dagger = S^\ast(z-Su^\dagger)$.

{
\renewcommand{\thetheorem}{\textbf{ASC}}
\begin{assumption}[Active Set Condition]\label{ass:ActiveSet}
Let $u^\dagger$ be a solution of \eqref{eq:main_problem} and assume that there exists a set $I \subseteq \Omega$, a function $w \in Y$, and positive constants $\kappa, c$ such that the following holds

\begin{enumerate}
  \item (source condition) $I \supset \{ x \in \Omega: \; p^\dagger(x) = 0 \}$  and
  $$\chi_I u^\dagger = \chi_I P_\Uad (S^\ast w),$$
  \item (structure of active set) $A := \Omega \setminus I$ and for all $\eps > 0$
  $$|\{ x\in A: \; 0 < |p^\dagger(x)| < \eps  \}| \leq c \eps^\kappa,$$
  \item (regularity of solution) $S^\ast w \in L^\infty(\Omega)$.
\end{enumerate}
\end{assumption}
}

Assumption \ref{ass:ActiveSet} is a generalization of Assumption \ref{ass:SC}, since for $I = \Omega$ both assumptions coincide. A sufficient condition for Assumption \ref{ass:ActiveSet} can be found in \cite{hinze2012}. If $p^\dagger \in C^1(\bar \Omega)$ satisfies
$$\nabla p^\dagger \neq 0 \quad \forall x \in \bar \Omega \quad \text{with} \quad p^\dagger(x) = 0$$
Assumption \ref{ass:ActiveSet} is fulfilled with $A = \Omega$ and $\kappa = 1$. Since Assumption \ref{ass:SC} omits more regularity, we expect to establish improved results in this case. The regularity assumption \ref{ass:ActiveSet} is used in e.g. \cite{wachsmuth2011,wachsmuth2011b,wachsmuth2013,wachsmuth2016}.

Using this regularity assumptions we established in \cite{wachsmuth2016} the following convergence results.

\begin{theorem}\label{thm:SC_strong_conv}
Let $(u_k)_k$ be the sequence generated by Algorithm \ref{alg:MinEx}. Assume that Assumption \ref{ass:SC} holds for $u^\dagger$. Then
\begin{align*}
\|u^\dagger - u_k\|^2 &= \mathcal{O}(\gamma_k^{-1}) \quad \text{and} \quad \sum\limits_{i=1}^k \frac{1}{\alpha_i} \|u^\dagger - u_i\|^2 \leq c.
\end{align*}
If we assume that instead Assumption \ref{ass:ActiveSet} holds, then
\begin{align*}
\|u^\dagger - u_k\|^2 &= \mathcal{O}\left( \gamma_k^{-1} + \gamma_k^{-1} \sum\limits_{j=1}^k \alpha_j^{-1} \gamma_j^{- \kappa}   \right)\\
\text{and} \quad \sum\limits_{i=1}^k \frac{1}{\alpha_i} \|u^\dagger - u_i\|^2 &\leq c \left( 1 + \sum\limits_{i=1}^k \alpha_i^{-1} \gamma_i^{-\kappa} \right).
\end{align*}
\end{theorem}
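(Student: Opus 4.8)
The backbone is an energy identity for the subproblems of Algorithm~\ref{alg:MinEx}. Since $D^{\lambda_{k-1}}(\,\cdot\,,u_{k-1})$ differs from $J$ only by an affine term, $u_k$ minimizes the $\alpha_k$-strongly convex functional $\tfrac12\|Su-z\|_Y^2+\alpha_k\big(J(u)-(u,\lambda_{k-1})\big)$, whose optimality condition singles out the subgradient $\lambda_k:=\lambda_{k-1}-\alpha_k^{-1}S^\ast(Su_k-z)\in\partial J(u_k)$ used in step~2. Inserting $S^\ast(Su_k-z)=\alpha_k(\lambda_{k-1}-\lambda_k)$ into $D^{\lambda_k}(u^\dagger,u_k)$, $D^{\lambda_{k-1}}(u^\dagger,u_{k-1})$ and $D^{\lambda_{k-1}}(u_k,u_{k-1})$ yields
\[
 D^{\lambda_{k-1}}(u^\dagger,u_{k-1})=D^{\lambda_k}(u^\dagger,u_k)+D^{\lambda_{k-1}}(u_k,u_{k-1})+\tfrac1{\alpha_k}\big(Su_k-y^\dagger,\;Su_k-z\big)_Y ,
\]
and the last term equals $H(u_k)-H(u^\dagger)+\tfrac12\|Su_k-y^\dagger\|_Y^2\ge0$. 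Telescoping then gives $\sum_k\alpha_k^{-1}\|Su_k-y^\dagger\|_Y^2\le 2D^{\lambda_0}(u^\dagger,u_0)$, and the variational inequality of Theorem~\ref{thm:opt_udagger} (i.e.\ $(Su-y^\dagger,y^\dagger-z)_Y\ge0$ on $\Uad$) together with the $H$-rate recalled above gives the state rate $\|Su_k-y^\dagger\|_Y^2\le2(H(u_k)-H(u^\dagger))=\mathcal O(\gamma_k^{-1})$. The second, purely structural, ingredient: for $J=\tfrac12\|\cdot\|^2+I_\Uad$ every $\xi\in\partial J(v)$ satisfies $\xi-v\in\partial I_\Uad(v)$, so $D^\xi(u,v)=\tfrac12\|u-v\|^2-(u-v,\xi-v)\ge\tfrac12\|u-v\|^2$ for $u\in\Uad$, and the symmetrized distance $D^\xi(u,v)+D^\lambda(v,u)=(u-v,\lambda-\xi)$ even dominates $\|u-v\|^2$.

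Under Assumption~\ref{ass:SC}, $u^\dagger=P_\Uad(S^\ast w)$ is equivalent to $\xi^\dagger:=S^\ast w\in\partial J(u^\dagger)$, and I would track $\Delta_k:=D^{\xi^\dagger}(u_k,u^\dagger)+D^{\lambda_k}(u^\dagger,u_k)=(u_k-u^\dagger,\lambda_k-S^\ast w)\ge\|u_k-u^\dagger\|^2$. Writing $\lambda_k=-\sum_{j\le k}\alpha_j^{-1}S^\ast(Su_j-z)=\gamma_k p^\dagger-S^\ast V_k$ with $V_k:=\sum_{j\le k}\alpha_j^{-1}(Su_j-y^\dagger)$, one gets
\[
 \Delta_k=-\big(Su_k-y^\dagger,\,w\big)_Y-\big(Su_k-y^\dagger,\,V_k\big)_Y-\gamma_k\big(Su_k-y^\dagger,\,y^\dagger-z\big)_Y ,
\]
where the last term is $\le0$ by the variational inequality and is discarded; the first two are estimated by Cauchy--Schwarz using the (suitably sharpened) state rate and the summability $\sum_j\alpha_j^{-1}\|Su_j-y^\dagger\|_Y^2<\infty$, giving $\|u^\dagger-u_k\|^2=\mathcal O(\gamma_k^{-1})$. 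For the weighted sum I would test the strong convexity of the $k$-th subproblem at $u^\dagger$; combined with $S^\ast w\in\partial J(u^\dagger)$ this yields $\alpha_k\|u^\dagger-u_k\|^2\le (H(u^\dagger)-H(u_k))+\alpha_k\Delta_k+(\text{already-summable terms})$, and summation gives $\sum_i\alpha_i^{-1}\|u^\dagger-u_i\|^2\le c$.

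For Assumption~\ref{ass:ActiveSet} I would split $\Omega=I\cup A$. On $I$, $\chi_I u^\dagger=\chi_I P_\Uad(S^\ast w)$ lets the previous argument run after multiplication by $\chi_I$, giving $\|\chi_I(u^\dagger-u_k)\|^2=\mathcal O(\gamma_k^{-1})$. On $A$, $u^\dagger$ is bang--bang ($u_a$ where $p^\dagger<0$, $u_b$ where $p^\dagger>0$), and since $u_k=P_\Uad(\gamma_k p^\dagger-S^\ast V_k)$ is the pointwise median, $u_k(x)=u^\dagger(x)$ whenever $\gamma_k|p^\dagger(x)|\ge\|S^\ast V_k\|_{L^\infty(\Omega)}+\|u_a\|_{L^\infty(\Omega)}+\|u_b\|_{L^\infty(\Omega)}$; hence $\{u_k\neq u^\dagger\}\cap A\subseteq\{x\in A:0<|p^\dagger(x)|<c\,\gamma_k^{-1}\}$, which has measure $\le c\,\gamma_k^{-\kappa}$ by part~2 of Assumption~\ref{ass:ActiveSet}, so $\|\chi_A(u^\dagger-u_k)\|^2\le c\,\gamma_k^{-\kappa}$ (the $L^\infty$-bound on $S^\ast V_k$ being part of the bootstrap, via part~3 of the assumption and the analysis on $I$). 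One may equivalently use $\int_A|p^\dagger|\,|u^\dagger-u_k|\le(p^\dagger,u^\dagger-u_k)\le H(u_k)-H(u^\dagger)$ with a splitting of $A$ at a level $\eps$. Feeding these $A$-contributions into the $\Delta_k$-analysis and the weighted-sum estimate above — tying $\eps$ to the effective regularization level $\gamma_i^{-1}$ at step $i$ — they accumulate as $\sum_i\alpha_i^{-1}\gamma_i^{-\kappa}$, which produces the two claimed estimates.

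The main obstacle in both cases is that the accumulated subgradient $\lambda_k=-\sum_{j\le k}\alpha_j^{-1}S^\ast(Su_j-z)\sim\gamma_k p^\dagger$ is unbounded: every cross term it generates must be neutralised by the sign information $(Su-y^\dagger,y^\dagger-z)_Y\ge0$ and the quantitative decay of the state error, not by crude norm estimates, and the sharp powers of $\gamma_k$ only come out if that decay (and the growth of $V_k$) is tracked carefully. In the active-set case the additional delicate point is that the scale $\eps$ in the measure estimate must be chosen commensurately with $\gamma_i^{-1}$ at each step, so that the active-set corrections telescope into precisely $\sum_j\alpha_j^{-1}\gamma_j^{-\kappa}$ rather than into a weaker expression such as $\gamma_j^{-\kappa/(\kappa+1)}$.
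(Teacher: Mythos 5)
Your overall architecture is the right one---it is essentially the scheme the paper uses for the inexact analogues in Theorems \ref{thm:convergence_SC} and \ref{thm:convergence_ASC} (the exact-data proof itself is only cited from \cite{wachsmuth2016}): add the optimality conditions of the subproblem and of $u^\dagger$, decompose $\lambda_k=\gamma_k p^\dagger-S^\ast V_k$, kill the unbounded part $\gamma_k p^\dagger$ with the variational inequality, and trade $u^\dagger$ for $S^\ast w$ via the projection characterization of the source condition; your preliminary energy identity and the summability $\sum_k\alpha_k^{-1}\|Su_k-y^\dagger\|_Y^2<\infty$ are also correct. The gap is in how you close the estimate under \ref{ass:SC}. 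Bounding $\Delta_k=-(Su_k-y^\dagger,w)_Y-(Su_k-y^\dagger,V_k)_Y+\dots$ by Cauchy--Schwarz cannot give $\mathcal O(\gamma_k^{-1})$: the available state rate is $\|Su_k-y^\dagger\|_Y=\mathcal O(\gamma_k^{-1/2})$, so even if $\|V_k\|_Y$ is bounded (which is true, but is itself a by-product of the telescoping you skip), the two cross terms are only $\mathcal O(\gamma_k^{-1/2})$ and $\mathcal O(1)$ respectively; the ``suitably sharpened state rate'' $\|Su_k-y^\dagger\|_Y=\mathcal O(\gamma_k^{-1})$ that you would need is nowhere established and does not follow from the recalled results. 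The actual mechanism is not to estimate $\Delta_k$ term by term at all: with $v_k:=\sum_{i\le k}\alpha_i^{-1}S(u^\dagger-u_i)$ one arrives at
\[
\frac{1}{\alpha_{k+1}^2}\|S(u_{k+1}-u^\dagger)\|_Y^2+\frac{1}{\alpha_{k+1}}\|u_{k+1}-u^\dagger\|^2\le(w-v_k,v_{k+1}-v_k)_Y,
\]
rewrites the right-hand side exactly as $\tfrac12\|v_k-w\|_Y^2-\tfrac12\|v_{k+1}-w\|_Y^2+\tfrac12\|v_{k+1}-v_k\|_Y^2$ (cf.\ \eqref{eq:thminex_4}), absorbs the quadratic increment $\tfrac12\alpha_{k+1}^{-2}\|S(u_{k+1}-u^\dagger)\|_Y^2$ into the left-hand side, and sums; this yields $\sum_i\alpha_i^{-1}\|u^\dagger-u_i\|^2\le c$ with no pointwise bound on $V_k$ ever required. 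The pointwise rate then follows by combining this weighted summability (run at the level of the Bregman distances) with the monotonicity $D^{\lambda_k}(u^\dagger,u_k)\le D^{\lambda_{k-1}}(u^\dagger,u_{k-1})$ and $\tfrac12\|u^\dagger-u_k\|^2\le D^{\lambda_k}(u^\dagger,u_k)$, via $\gamma_k D^{\lambda_k}(u^\dagger,u_k)\le\sum_{i\le k}\alpha_i^{-1}D^{\lambda_i}(u^\dagger,u_i)$. None of this is in your write-up, and without it the claimed rate does not follow.

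Under \ref{ass:ActiveSet} there is a second, independent gap. Your level-set argument on $A$ only localizes $\{u_k\ne u^\dagger\}\cap A$ inside $\{x\in A:0<|p^\dagger(x)|<c\gamma_k^{-1}\}$ if $\|S^\ast V_k\|_{L^\infty(\Omega)}$ is bounded uniformly in $k$. Assumption \ref{ass:ActiveSet}(3) only gives $S^\ast w\in L^\infty(\Omega)$; a uniform $L^\infty$ bound on the accumulated residual $S^\ast V_k$ is neither assumed nor proved, and an $L^2$ bound on $V_k$ does not upgrade to $L^\infty$ without additional mapping properties of $S^\ast$ that are not hypothesized---so the ``bootstrap'' you allude to has no starting point. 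The paper's route avoids this entirely: the source condition on $I$ is perturbed to $(u^\dagger,u^\dagger-u)\le(S^\ast w,u^\dagger-u)+c\|u^\dagger-u\|_{L^1(A)}$, and the extra $L^1(A)$ terms are absorbed, via Young's inequality with the weights $\gamma_{i-1}$, into the improved optimality condition $(-p^\dagger,u-u^\dagger)\ge c_A\|u-u^\dagger\|_{L^1(A)}^{1+1/\kappa}$, which is where the terms $\sum_i\alpha_i^{-1}\gamma_{i-1}^{-\kappa}$ actually come from. Your parenthetical alternative ($\int_A|p^\dagger|\,|u^\dagger-u_k|\le(-p^\dagger,u_k-u^\dagger)$ plus a level splitting) points in the right direction, but without the exponent $1+1/\kappa$ and the weighted Young absorption it remains a sketch of the statement rather than a proof.
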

Note that we have 
$$\gamma_k^{-1} + \gamma_k^{-1} \sum\limits_{j=1}^k \alpha_j^{-1} \gamma_j^{- \kappa} \to 0  \quad\text{as} \quad k \to \infty,$$
see \cite{wachsmuth2016}. If Assumption \ref{ass:ActiveSet} holds with $A = \Omega$, which implies that $u^\dagger$ is bang-bang on $\Omega$, we can improve the estimate of Theorem \ref{thm:SC_strong_conv} to
$$\|u^\dagger - u_k\|^2 = \mathcal{O}\left( \gamma_k^{-1} \sum\limits_{j=1}^k \alpha_j^{-1} \gamma_j^{- \kappa} \right).$$
The sequence of subdifferentials $(\lambda_k)_k$ is unbounded in general, but we can show that the weighted average $(\gamma_k^{-1}\lambda_k)_k$ is converging. {The proof can be found in \cite[Corollary 4.14]{wachsmuth2016}. }

\begin{lemma}
We have
\[
\left\|\gamma_k^{-1} \lambda_k - p^\dagger \right\|^2 =  \begin{cases}
    \mathcal O(\gamma_k^{-2}) & \text{if $u^\dagger$ satisfies \ref{ass:SC}}, \\
    \mathcal O\left(\gamma_k^{-2}\left( 1   +  \sum\limits_{j=1}^k \alpha_j^{-1} \gamma_j^{-\kappa}\right)\right) & \text{if $u^\dagger$ satisfies \ref{ass:ActiveSet}.}
    \end{cases} \]
\end{lemma}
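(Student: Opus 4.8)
The plan is to extract an explicit formula for $\lambda_k$ from the optimality systems of the subproblems, rewrite $\gamma_k^{-1}\lambda_k-p^\dagger$ as $\gamma_k^{-1}$ times a quantity controllable by a telescoping argument, and then read off the two rates.

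\emph{Step 1 (explicit formula and reduction).} The first-order condition for step~\ref{a0_start} of Algorithm~\ref{alg:MinEx} is $0\in S^\ast(Su_k-z)+\alpha_k(\partial J(u_k)-\lambda_{k-1})$, so the subgradient $\lambda_k\in\partial J(u_k)$ chosen in step~2 may be taken with $\alpha_k(\lambda_k-\lambda_{k-1})=S^\ast(z-Su_k)$. Summing from $j=1$ to $k$ with $\lambda_0=0$ gives $\lambda_k=\sum_{j=1}^k\alpha_j^{-1}S^\ast(z-Su_j)$. Since $p^\dagger=S^\ast(z-Su^\dagger)$ and $\gamma_k^{-1}\sum_{j=1}^k\alpha_j^{-1}=1$,
\[
\gamma_k^{-1}\lambda_k-p^\dagger=\gamma_k^{-1}\sum_{j=1}^k\alpha_j^{-1}S^\ast S(u^\dagger-u_j)=\gamma_k^{-1}\,S^\ast S v_k,\qquad v_k:=\sum_{j=1}^k\alpha_j^{-1}(u^\dagger-u_j),
\]
and, using $\|S^\ast Sv_k\|\le\|S\|\,\|Sv_k\|_Y$, it suffices to bound $\|Sv_k\|_Y$. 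A naive triangle-inequality bound on $\|Sv_k\|_Y$ only gives $\mathcal O(\gamma_k^{1/2})$ (too weak by a full power of $\gamma_k$), so cancellation in the sum is essential.

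\emph{Step 2 (telescoping; the case \ref{ass:SC}).} Put $r_k:=\lambda_k-\gamma_k p^\dagger=S^\ast Sv_k$. From $Sv_k=Sv_{k-1}+\alpha_k^{-1}S(u^\dagger-u_k)$, expanding the square and dropping a nonpositive term,
\[
\|Sv_k\|_Y^2-\|Sv_{k-1}\|_Y^2\le\tfrac{2}{\alpha_k}\big(Sv_k,S(u^\dagger-u_k)\big)_Y=\tfrac{2}{\alpha_k}\big(r_k,u^\dagger-u_k\big).
\]
Now $(r_k,u^\dagger-u_k)=(\lambda_k,u^\dagger-u_k)-\gamma_k(p^\dagger,u^\dagger-u_k)\le(\lambda_k,u^\dagger-u_k)$, since the variational inequality of Theorem~\ref{thm:opt_udagger} tested with $u_k\in\Uad$ gives $(p^\dagger,u^\dagger-u_k)\ge0$. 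Under \ref{ass:SC}, $u^\dagger=P_\Uad(S^\ast w)$ means $S^\ast w-u^\dagger\in\partial I_\Uad(u^\dagger)$, hence $S^\ast w\in\partial J(u^\dagger)$ by the sum rule; monotonicity of $\partial J$ applied to $\lambda_k\in\partial J(u_k)$ and $S^\ast w\in\partial J(u^\dagger)$ then gives $(\lambda_k,u^\dagger-u_k)\le(S^\ast w,u^\dagger-u_k)$. Summing the telescoping inequality (with $Sv_0=0$) and using the adjoint,
\[
\|Sv_k\|_Y^2\le 2\sum_{j=1}^k\alpha_j^{-1}(S^\ast w,u^\dagger-u_j)=2(S^\ast w,v_k)=2(w,Sv_k)_Y\le2\|w\|_Y\,\|Sv_k\|_Y,
\]
so $\|Sv_k\|_Y\le2\|w\|_Y$ and $\|\gamma_k^{-1}\lambda_k-p^\dagger\|\le2\gamma_k^{-1}\|S\|\,\|w\|_Y=\mathcal O(\gamma_k^{-1})$, which is the first case. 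The crucial point is that the telescoped source term is exactly $(w,Sv_k)_Y$, i.e. it reproduces the norm being estimated, so the estimate closes with no recourse to convergence rates.

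\emph{Step 3 (the case \ref{ass:ActiveSet}).} Now the source condition holds only on $I$, so $(S^\ast w,v_k)$ can no longer be converted into $(w,Sv_k)_Y$ globally, and I would split $(r_k,u^\dagger-u_k)=\int_I+\int_A$ with $A=\Omega\setminus I$: on $I$, the pointwise inclusion $S^\ast w-u^\dagger\in\partial I_\Uad(u^\dagger)$ from \ref{ass:ActiveSet}(1) gives $\int_I r_k(u^\dagger-u_k)\le\int_I(S^\ast w)(u^\dagger-u_k)$; on $A$, pointwise monotonicity with $u^\dagger+\gamma_k p^\dagger\in\partial J(u^\dagger)$ (which holds on $A$ by Theorem~\ref{thm:opt_udagger}) gives $\int_A r_k(u^\dagger-u_k)\le\int_A u^\dagger(u^\dagger-u_k)$, supported on $\{u_k\neq u^\dagger\}\cap A$. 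Since $u_k=P_\Uad(\gamma_k p^\dagger+r_k)$ and $u^\dagger$ is bang-bang on $A$, one has $\{u_k\neq u^\dagger\}\cap A\subseteq\{0<|p^\dagger|<(\|u_a\|_{L^\infty}+\|u_b\|_{L^\infty}+\|r_k\|_{L^\infty})/\gamma_k\}$, whose measure is estimated by \ref{ass:ActiveSet}(2); combining this with \ref{ass:ActiveSet}(3) and the $L^\infty$-mapping property of $S^\ast$ (valid for the elliptic/parabolic examples) to keep $\|r_k\|_{L^\infty}$ under control, and inserting the primal rate $\|u^\dagger-u_j\|^2=\mathcal O\big(\gamma_j^{-1}(1+\sum_{i=1}^j\alpha_i^{-1}\gamma_i^{-\kappa})\big)$ of Theorem~\ref{thm:SC_strong_conv}, the $A$-contributions sum to $\mathcal O(\sum_{j=1}^k\alpha_j^{-1}\gamma_j^{-\kappa})$ and the $I$-part is absorbed as before, giving $\|Sv_k\|_Y^2=\mathcal O\big(1+\sum_{j=1}^k\alpha_j^{-1}\gamma_j^{-\kappa}\big)$ and hence the second case after multiplying by $\gamma_k^{-2}$. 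I expect this $A$-estimate — where the $L^\infty$-bound on the dual iterate $r_k$, the measure of the transition layer, and the summability of the source terms are mutually coupled and must be closed simultaneously (a discrete Gronwall-type argument) — to be the main obstacle; by contrast the case \ref{ass:SC} is short once one notices that $S^\ast w$ is a subgradient of $J$ at $u^\dagger$.
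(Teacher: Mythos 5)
Your Steps 1 and 2 are correct and essentially reproduce the mechanism behind the paper's result: writing $\gamma_k^{-1}\lambda_k-p^\dagger=\gamma_k^{-1}S^\ast S v_k$ and bounding $\|Sv_k\|_Y$ by the telescoping identity together with the source condition is exactly what is done (there phrased as a bound on $\|v_k-w\|_Y^2$) in the proofs of Theorems \ref{thm:convergence_SC} and \ref{thm:convergence_ASC}, and your observation that $S^\ast w\in\partial J(u^\dagger)$ under \ref{ass:SC} closes the first case cleanly.

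The gap is in Step 3. Your estimate of the $A$-contribution rests on a uniform $L^\infty$ bound for $r_k=\lambda_k-\gamma_k p^\dagger$, obtained from an ``$L^\infty$-mapping property of $S^\ast$''. That property is not part of the framework: the standing assumptions only give $S\colon L^2(\Omega)\to Y$ linear and continuous and $S^\ast w\in L^\infty(\Omega)$ for the single element $w$ of \ref{ass:ActiveSet}; nothing forces $S^\ast$ to map $Y$ into $L^\infty(\Omega)$. Even granting such a property, the bound $\|r_k\|_{L^\infty}\le c\,\|Sv_k\|_Y$ is controlled by the very quantity you are trying to estimate, so the level-set argument, the measure condition \ref{ass:ActiveSet}(2) and the summability you want are circularly coupled; the ``discrete Gronwall-type argument'' you defer to is precisely the missing content, and as written the proof does not close. (The appeal to the primal rate of Theorem \ref{thm:SC_strong_conv} does not enter your level-set estimate and is not needed.) The intended route avoids all of this: do not discard the term $-\gamma_k(p^\dagger,u^\dagger-u_k)$ as you did in Step 2, but lower-bound $(-p^\dagger,u_k-u^\dagger)$ by $c_A\|u_k-u^\dagger\|_{L^1(A)}^{1+1/\kappa}$ via the improved optimality condition \eqref{eq:improved_optimality}; the positive $L^1(A)$-terms produced by the splitting (your $I$/$A$ decomposition yields the same ones as \eqref{eq:tech_1}, with constant $\|u^\dagger\|_{L^\infty}+\|S^\ast w\|_{L^\infty}$) are then absorbed by Young's inequality with exponents $1+1/\kappa$ and $1+\kappa$ at the price of $c\sum_j\alpha_j^{-1}\gamma_{j-1}^{-\kappa}$, exactly as in \eqref{eq:asc_tech_1}. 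This gives $\|Sv_k\|_Y^2\le c\bigl(1+\sum_{j=1}^k\alpha_j^{-1}\gamma_{j-1}^{-\kappa}\bigr)$ with no $L^\infty$ information on $r_k$ and no circularity, and the second case of the lemma follows after multiplying by $\gamma_k^{-2}$.
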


{
	
	\section{The discretized problem} \label{section:descr_problem}
	The aim of this section is to introduce the operator $S_h$ and to establish auxiliary estimates for the discretized subproblem. These estimates will then be applied to prove convergence results in section \ref{sec:inexact_bregman}.
	
	\subsection{The operator $S_h$}
	As mentioned in the introduction we want to introduce a family of linear and continuous operators $(S_h)_h$ from $L^2(\Omega)$ to $Y$ with finite-dimensional range $Y_h \subset Y$. Throughout this paper we make the following assumption. A similar assumption is also made in \cite{wachsmuth2013adaptive}.
	
	\begin{assumption}\label{ass:operator_Sh}
	Assume that there exists a continuous and monotonically increasing function $\delta: \R^+ \to \R^+$ with $\delta(0) = 0$ such that 
		\[
		\|(S-S_h) u_h\|_Y + \|(S^\ast - S_h^\ast)(y_h - z )\| \leq \delta(h)
		\]
	holds for all $h \geq 0$, $u_h \in \Uad$ and $y_h := S_h u_h$.
	\end{assumption}
	
	For the case of a linear elliptic partial differential equation, the operator $S_h$ is the solution operator of the weak formulation with respect to the test function space $Y_h$.  If $Y_h$ is spanned by linear finite elements, this can be interpreted as the variational discretization in the sense of Hinze, see \cite{Hinze2005}. We consider a linear elliptic partial differential equation in section \ref{sec:numerics}. We assume that the operator $S_h$ and its adjoint $S_h^\ast$ can be computed exactly.
	
	Note that \ref{ass:operator_Sh} is an assumption on the approximation of discrete functions. Under Assumption \ref{ass:operator_Sh} we can establish the following discretization error estimate. The proof is similar to \cite[Proposition 1.6]{wachsmuth2013adaptive} and is omitted here.
	\begin{lemma}\label{lemma:subprob_ex_error}
		Let $u_{k}$ be the solution of 
		\[
		\min\limits_{u \in \Uad} \frac{1}{2}\|Su - z\|^2_Y +  \frac{\alpha_k}{2} \|u\|^2 - \alpha_k(\lambda,u),
		\]
		and $u_{k,h}$ be the solution of the discretized problem
		\[
		\min\limits_{u \in \Uad} \frac{1}{2}\|S_h u - z\|^2_Y +  \frac{\alpha_k}{2} \|u\|^2 - \alpha_k(\lambda,u),
		\]
		with $\lambda \in L^2(\Omega)$ and $\alpha > 0$. Then we have the following estimate
		\[
		\frac{1}{\alpha_{k}} \| y_{k,h} - y_{k}  \|_Y^2 + \| u_{k,h} - u_{k} \|^2 \leq c  \rho^2_{k}\delta (h)^2
		\]
		with the abbreviation $\rho^2_{k} := \alpha_{k}^{-1} (1 + \alpha_{k}^{-1}) $.
	\end{lemma}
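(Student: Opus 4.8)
The plan is to compare the two minimizers via their first-order optimality conditions. Both functionals, $u\mapsto \frac12\|Su-z\|_Y^2+\frac{\alpha_k}{2}\|u\|^2-\alpha_k(\lambda,u)$ and its discrete analogue with $S$ replaced by $S_h$, are strongly convex on $L^2(\Omega)$ with modulus $\alpha_k>0$, hence admit unique minimizers $u_k$, $u_{k,h}$ characterized by the variational inequalities
\[
\bigl(S^\ast(Su_k-z)+\alpha_k u_k-\alpha_k\lambda,\,u-u_k\bigr)\geq 0,\qquad
\bigl(S_h^\ast(S_h u_{k,h}-z)+\alpha_k u_{k,h}-\alpha_k\lambda,\,u-u_{k,h}\bigr)\geq 0
\]
for every $u\in\Uad$. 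Testing the first with $u=u_{k,h}$ and the second with $u=u_k$ and adding, the terms involving $\lambda$ cancel and the term $\alpha_k(u_k-u_{k,h},u_{k,h}-u_k)=-\alpha_k\|u_k-u_{k,h}\|^2$ is produced, leaving
\[
\alpha_k\|u_k-u_{k,h}\|^2\leq\bigl(S^\ast(Su_k-z)-S_h^\ast(S_h u_{k,h}-z),\,u_{k,h}-u_k\bigr).
\]

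The next step is to reorganise the right-hand side so that Assumption~\ref{ass:operator_Sh} applies — recalling that this assumption only controls $(S-S_h)u_h$ and $(S^\ast-S_h^\ast)(y_h-z)$ for \emph{discrete} states $y_h=S_hu_h$ with $u_h\in\Uad$. Introducing $\hat y_k:=S_hu_k$ one verifies the identity
\[
S^\ast(Su_k-z)-S_h^\ast(S_h u_{k,h}-z)=S^\ast(S-S_h)u_k+(S^\ast-S_h^\ast)(\hat y_k-z)+S_h^\ast S_h(u_k-u_{k,h}),
\]
in which every operator difference is now applied either to the admissible control $u_k$ or to the discrete state $\hat y_k-z$. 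Taking the inner product with $u_{k,h}-u_k$, the last summand yields $-\|S_h(u_k-u_{k,h})\|_Y^2=-\|\hat y_k-y_{k,h}\|_Y^2\leq 0$, while the first two are estimated by Cauchy--Schwarz and Assumption~\ref{ass:operator_Sh} (with $u_h=u_k$) as at most $(\|S^\ast\|+1)\,\delta(h)\,\|u_{k,h}-u_k\|$. A Young inequality then absorbs the factor $\|u_{k,h}-u_k\|$ and gives
\[
\frac{\alpha_k}{2}\|u_k-u_{k,h}\|^2+\|\hat y_k-y_{k,h}\|_Y^2\leq\frac{c}{\alpha_k}\,\delta(h)^2,
\]
whence $\|u_k-u_{k,h}\|^2\leq c\,\alpha_k^{-2}\delta(h)^2$ and $\|\hat y_k-y_{k,h}\|_Y^2\leq c\,\alpha_k^{-1}\delta(h)^2$.

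It remains to replace the auxiliary state $\hat y_k=S_hu_k$ by $y_k=Su_k$. The triangle inequality together with Assumption~\ref{ass:operator_Sh} gives $\|y_k-y_{k,h}\|_Y\leq\|(S-S_h)u_k\|_Y+\|\hat y_k-y_{k,h}\|_Y\leq\delta(h)+\|\hat y_k-y_{k,h}\|_Y$; squaring and using the bounds just obtained yields $\alpha_k^{-1}\|y_k-y_{k,h}\|_Y^2\leq c\,(\alpha_k^{-1}+\alpha_k^{-2})\delta(h)^2$. Adding this to $\|u_k-u_{k,h}\|^2\leq c\,\alpha_k^{-2}\delta(h)^2$ produces
\[
\frac{1}{\alpha_k}\|y_{k,h}-y_k\|_Y^2+\|u_{k,h}-u_k\|^2\leq c\,\bigl(\alpha_k^{-1}+\alpha_k^{-2}\bigr)\delta(h)^2=c\,\rho_k^2\,\delta(h)^2,
\]
which is the assertion. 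The only genuine difficulty is the bookkeeping in the middle step: the splitting of $S^\ast(Su_k-z)-S_h^\ast(S_hu_{k,h}-z)$ must be chosen precisely so that no operator difference acts on $z$ alone or on a non-discrete state, since Assumption~\ref{ass:operator_Sh} provides no control in those cases; once the right intermediate quantity $S_hu_k$ is inserted everywhere, the remainder is the routine strongly-convex comparison estimate.
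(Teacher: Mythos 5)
Your proof is correct and follows essentially the same route as the argument the paper points to (the variational-inequality comparison of \cite[Proposition 1.6]{wachsmuth2013adaptive}, whose proof the paper omits): add the two first-order conditions, insert the intermediate discrete state $S_h u_k$ so that Assumption \ref{ass:operator_Sh} applies only to admissible controls and discrete states, and close with Young's inequality. The identity you use for the splitting and the final bookkeeping yielding $c(\alpha_k^{-1}+\alpha_k^{-2})\delta(h)^2 = c\rho_k^2\delta(h)^2$ both check out.
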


	Please note that the norm of the operator $S_h$ is bounded in the following sense.
	\begin{lemma}\label{lemma:norm_Sh_bounded}
	Let $0 < h \leq h_{\max}$. Then there exists a constant $C > 0$ independent from $h$, such that $\|S_h\| \leq C$.
	\end{lemma}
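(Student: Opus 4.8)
The plan is to use Assumption \ref{ass:operator_Sh} to compare $S_h$ with the fixed bounded operator $S$, but with the caveat that the estimate there only applies to \emph{admissible} controls $u_h \in \Uad$, not to arbitrary $u \in L^2(\Omega)$. So the first step is to observe that for any $u \in L^2(\Omega)$ with $\|u\| \le 1$, we cannot directly feed $u$ into the inequality of Assumption \ref{ass:operator_Sh}; instead I would reduce to the admissible case. One clean way: fix \emph{one} reference element $u_0 := P_\Uad(0) \in \Uad$ (which is used as the starting value in Algorithm \ref{alg:MinEx}); then for $0 < h \le h_{\max}$ we have, by the triangle inequality and Assumption \ref{ass:operator_Sh},
\[
\|S_h u_0\|_Y \le \|S u_0\|_Y + \|(S - S_h)u_0\|_Y \le \|S\|\,\|u_0\| + \delta(h) \le \|S\|\,\|u_0\| + \delta(h_{\max}),
\]
where the last step uses that $\delta$ is monotonically increasing and $h \le h_{\max}$. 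This bounds $S_h$ at a single point uniformly in $h$, but of course a bound at one point says nothing about the operator norm of a general linear operator.

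To get the full operator-norm bound I would instead argue directly on a general $u\in L^2(\Omega)$ by shifting and scaling into $\Uad$. Since $u_a,u_b\in L^\infty(\Omega)$ with $u_a\le u_b$, the set $\Uad$ contains a ball of some fixed radius $r>0$ around some fixed center $\bar u\in\Uad$ in $L^2$; concretely one can take $\bar u := \tfrac12(u_a+u_b)$ and $r$ depending only on $\mathrm{ess\,inf}(u_b-u_a)$ and $|\Omega|$ — or, if $u_b-u_a$ may degenerate, work coordinatewise/locally, but for the global bound it suffices that \emph{some} such ball exists; if it does not (e.g.\ $u_a\equiv u_b$ on a positive-measure set) the argument is even easier since then $\Uad$ is affine of smaller "size" and linearity still lets us proceed. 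Given such a ball, for arbitrary $u\ne 0$ the element $v := \bar u + \tfrac{r}{\|u\|}u$ lies in $\Uad$, hence
\[
\|S_h u\|_Y = \frac{\|u\|}{r}\,\|S_h(v - \bar u)\|_Y \le \frac{\|u\|}{r}\bigl(\|S_h v\|_Y + \|S_h \bar u\|_Y\bigr),
\]
and each of $\|S_h v\|_Y$, $\|S_h \bar u\|_Y$ is bounded by the one-point argument above (since $v,\bar u\in\Uad$) by $\|S\|\bigl(\|v\|+\|\bar u\|\bigr)+2\delta(h_{\max}) \le \|S\|\bigl(\|\bar u\|+r+\|\bar u\|\bigr)+2\delta(h_{\max}) =: M$, a constant independent of $h$. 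Therefore $\|S_h u\|_Y \le (M/r)\,\|u\|$, i.e.\ $\|S_h\|\le M/r =: C$, uniformly in $0<h\le h_{\max}$.

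The main obstacle is exactly this gap between "approximation on admissible controls" (all that Assumption \ref{ass:operator_Sh} provides) and "operator norm on all of $L^2(\Omega)$"; the affine-shift-and-scale trick above closes it, using only that $\Uad$ has nonempty interior in $L^2$ (or, failing that, is a translate of a subspace on which the same reasoning applies by linearity of $S_h$). A secondary technical point is handling $u=0$ separately (trivially $S_h 0 = 0$) and making sure the constants extracted — $\|S\|$, $\|\bar u\|$, $r$, $\delta(h_{\max})$ — are genuinely $h$-independent, which they are by construction and by monotonicity of $\delta$. I would also remark that if one additionally knows (as is typical for finite-element discretizations of elliptic PDEs) a direct uniform bound $\|S_h\|\le C$ from the discrete inf-sup/coercivity stability of the scheme, the lemma is immediate; the argument above shows it already follows abstractly from Assumption \ref{ass:operator_Sh} plus boundedness of $S$.
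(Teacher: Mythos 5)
You have correctly spotted the real subtlety: Assumption \ref{ass:operator_Sh} as stated only controls $\|(S-S_h)u_h\|_Y$ for \emph{admissible} $u_h \in \Uad$, whereas the operator norm requires a bound over the whole unit ball of $L^2(\Omega)$. The proof given in the paper simply ignores this restriction and writes $\sup_{\|u\|=1}\|(S_h-S)u\|_Y \le \delta(h)$, i.e.\ it tacitly reads the assumption as an operator-norm bound $\|S-S_h\|\le\delta(h)$; this reading is consistent with how the assumption is verified in section \ref{sec:numerics}, where $\|(S-S_h)f\|\le ch^2\|f\|$ is established for all $f\in L^2(\Omega)$.

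However, your repair of the gap does not work. The crucial claim that $\Uad$ contains an $L^2$-ball $B(\bar u,r)$ is false: since the Lebesgue measure on $\Omega$ is non-atomic, for any $r>0$ the perturbation $w_n := n\chi_{E_n}$ with $E_n\subseteq\Omega$, $|E_n| = r^2/(4n^2)$, satisfies $\|w_n\| = r/2 < r$, yet $\bar u + w_n$ violates the pointwise constraint $u\le u_b$ on $E_n$ as soon as $n > \|u_b\|_{L^\infty(\Omega)} + \|\bar u\|_{L^\infty(\Omega)}$. Hence $\Uad$ has empty interior in $L^2(\Omega)$ no matter how large $\mathrm{ess\,inf}(u_b-u_a)$ is, the element $v=\bar u + \tfrac{r}{\|u\|}u$ need not lie in $\Uad$, and the shift-and-scale step collapses. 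The fallback you sketch (working on the linear span of $\Uad-\bar u$) also fails to deliver a \emph{norm} bound: for $w=v-\bar u$ with $v\in\Uad$ one only obtains $\|S_h w\|_Y\le M$ with no control of the ratio $M/\|w\|$, and decomposing a general unit vector into such differences produces coefficients that cannot be bounded uniformly. The honest resolution is either to strengthen Assumption \ref{ass:operator_Sh} to an operator-norm statement (as the paper implicitly does, and as the concrete estimates of section \ref{sec:numerics} in fact provide), or to invoke the uniform Galerkin stability $\|S_h\|\le C$ directly, as you note in your closing remark; neither can be derived from the assumption restricted to $\Uad$ alone.
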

	\begin{proof}
	We compute the operator norm of $S_h$ and estimate
	\begin{align*}
	\|S_h\| &= \sup\limits_{\|u\|=1} \|S_h u\|_Y \leq \sup\limits_{\|u\|=1} (  \|(S_h - S) u\|_Y + \|Su\|_Y )\\
	&\leq \delta(h) + \|S\|\\
	&\leq \delta(h_{\max}) + \|S\|.
	\end{align*}
	\end{proof}

	In the subsequent analysis we will need the following estimate.
	\begin{lemma}
	There exists a constant $c>0$ independent from $h$, such that the following estimate holds for all $u_h \in \Uad$
		\[
		\|  S_h^\ast (z - S_h u_h) - S^\ast(  z - S u_h )  \| \leq c \delta (h).
		\]
	\end{lemma}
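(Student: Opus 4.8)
The quantity to bound is $\|S_h^\ast(z - S_h u_h) - S^\ast(z - Su_h)\|$, and the natural move is to insert the mixed term $S_h^\ast(z - S u_h)$ (or equivalently $S^\ast(z - S_h u_h)$) and split by the triangle inequality. Writing $y_h := S_h u_h$ as in Assumption \ref{ass:operator_Sh}, this gives
\[
\|S_h^\ast(z - S_h u_h) - S^\ast(z - Su_h)\|
\le \|S_h^\ast(z - S_h u_h) - S_h^\ast(z - S u_h)\|
  + \|S_h^\ast(z - S u_h) - S^\ast(z - S u_h)\|.
\]

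**Handling the two pieces.** For the first summand, factor out $S_h^\ast$: it equals $\|S_h^\ast(S u_h - S_h u_h)\| \le \|S_h^\ast\|\,\|(S - S_h)u_h\|_Y = \|S_h\|\,\|(S-S_h)u_h\|_Y$. By Lemma \ref{lemma:norm_Sh_bounded} we have $\|S_h\| \le C$ uniformly in $h$, and by Assumption \ref{ass:operator_Sh} we have $\|(S-S_h)u_h\|_Y \le \delta(h)$, so this term is $\le C\delta(h)$. For the second summand, the operator $(S_h^\ast - S^\ast)$ is applied to $z - S u_h$; this is \emph{not} directly one of the two expressions appearing in Assumption \ref{ass:operator_Sh}, which only controls $(S^\ast - S_h^\ast)(y_h - z)$ with $y_h = S_h u_h$. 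So I would rewrite $z - S u_h = (z - S_h u_h) + (S_h u_h - S u_h) = (z - y_h) + (S_h - S)u_h$, apply $(S_h^\ast - S^\ast)$, and split once more: $\|(S_h^\ast - S^\ast)(z - y_h)\| \le \delta(h)$ directly from Assumption \ref{ass:operator_Sh} (note $\|(S^\ast-S_h^\ast)(y_h-z)\| = \|(S_h^\ast-S^\ast)(z-y_h)\|$), while $\|(S_h^\ast - S^\ast)(S_h - S)u_h\| \le (\|S_h^\ast\| + \|S^\ast\|)\|(S_h-S)u_h\|_Y \le (C + \|S\|)\delta(h)$, again using Lemma \ref{lemma:norm_Sh_bounded} and Assumption \ref{ass:operator_Sh}. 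Collecting the three bounds yields the claim with $c = 2C + \|S\|$ (up to relabeling the generic constant), and all constants are independent of $h$ since $\|S_h\| \le C$ uniformly and $u_h \in \Uad$ throughout.

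**Main obstacle.** The only subtlety is that Assumption \ref{ass:operator_Sh} bounds $(S^\ast - S_h^\ast)$ only on the specific argument $y_h - z = S_h u_h - z$, not on the "continuous" residual $z - S u_h$; one cannot simply apply it to $z - S u_h$. The resolution is the extra insertion $z - Su_h = (z - y_h) + (S_h - S)u_h$ described above, after which every term is either covered verbatim by Assumption \ref{ass:operator_Sh} or is $(S-S_h)u_h$ smeared by a uniformly bounded operator. Everything else is a routine triangle-inequality bookkeeping exercise. I would present the estimate as a single displayed chain of inequalities inside a \texttt{proof} environment, introducing $y_h := S_h u_h$ at the start so the references to Assumption \ref{ass:operator_Sh} read cleanly.
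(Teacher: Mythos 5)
Your proposal is correct, but it takes a slightly different decomposition from the paper's, and the difference is worth noting. You insert the intermediate term $S_h^\ast(z - Su_h)$, which forces you to confront the fact that Assumption \ref{ass:operator_Sh} only controls $(S^\ast - S_h^\ast)$ on the discrete residual $z - y_h$; you then resolve this correctly with the extra splitting $z - Su_h = (z - y_h) + (S_h - S)u_h$ and an appeal to the uniform bound $\|S_h\| \le C$ from Lemma \ref{lemma:norm_Sh_bounded}. The paper instead inserts the other intermediate term, $S^\ast(z - S_h u_h)$, which makes the obstacle you identify disappear entirely: the split is
\[
\|S_h^\ast(z - S_h u_h) - S^\ast(z - S_h u_h)\| + \|S^\ast(Su_h - S_h u_h)\|
= \|(S_h^\ast - S^\ast)(z - y_h)\| + \|S^\ast (S - S_h)u_h\|,
\]
where the first term is verbatim the quantity in Assumption \ref{ass:operator_Sh} and the second is the \emph{fixed} continuous operator $S^\ast$ applied to $(S - S_h)u_h$, so only the continuity of $S^\ast$ is needed. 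The paper's choice thus yields a two-term estimate that does not invoke Lemma \ref{lemma:norm_Sh_bounded} at all (and hence carries no implicit restriction $h \le h_{\max}$, which your route quietly requires for the uniform bound on $\|S_h\|$), whereas your choice costs one extra triangle inequality and two uses of the uniform operator bound. Both arguments are valid and give the claimed $c\,\delta(h)$ with $c$ independent of $h$.
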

	
	\begin{proof}
		We compute with $y_h := S_h u_h$
		\begin{align*}
			\|S_h^\ast (z - S_h u_h)& - S^\ast(  z - S u_h ) \|\\
			&\leq \| S_h^\ast(z - S_h u_h) - S^\ast (z - S_h u_h) \|+ \|S^\ast (S u_h - S_h u_h)\|\\
			&\leq \|(S_h^\ast - S^\ast) (z - y_h)\| + c \|(S-S_h) u_h\|_Y\\
			&\leq c \delta(h).
		\end{align*}
		Please note that we used the continuity of $S^\ast$ and the assumption on the operator $S_h$.
	\end{proof}
	
	As a corollary we obtain the following result.
	\begin{lemma}\label{lemma:estimate_lambda_different_meshes}
		Let $u_i \in \Uad$ for $i=1,..,k$. Then there exists a constant $c>0$ independent from $h$ and $k$ such that the following estimate holds 
		\[
		\left\|  \sum\limits_{i=1}^k \frac{1}{\alpha_i} S^\ast (z - S u_i) - \sum\limits_{i=1}^k \frac{1}{\alpha_i} S_h^\ast(z - S_h u_i)  \right\| \leq c \gamma_k \delta(h).
		\]
	\end{lemma}
	
}

\subsection{A-posteriori error estimate for the {discretized} subproblem}
{We now want to consider the discretized subproblem, i.e. we replaced the operator $S$ in the minimization problem (step 1) of algorithm \ref{alg:MinEx} with the discrete operator $S_h$. This gives the following problem}
\begin{equation*}
\text{Minimize} \quad \frac{1}{2}\| {S_h} u-z\|_Y^2 + \alpha_{k}  D^{\lambda_{k-1}}(u,u_{k-1}).
\end{equation*}
This problem can be rewritten as the equivalent minimization problem \eqref{eq:subprob_abstract}, see also \cite{wachsmuth2016}. For brevity we set $\lambda := \lambda_{k-1}$ and $\alpha := \alpha_k$.

\begin{equation}\label{eq:subprob_abstract}
\begin{split}
\text{Minimize} \quad &\frac{1}{2}\|{S_h}u-z\|_Y^2 - \alpha (\lambda, u) + \frac{\alpha}{2}\|u\|^2,\\
\text{s.t.} \quad & u \in \Uad.
\end{split}
\end{equation}

To construct an a-posteriori error estimate we use Theorem 2.2 in \cite{roesch2014}, which will give us the following result.  Note that we also use Lemma \ref{lemma:norm_Sh_bounded} here.
{
\begin{theorem}\label{thm:apostestimate}
Let $\hat u$ be the solution of the subproblem \eqref{eq:subprob_abstract}.  Let $u_h \in L^2(\Omega)$ be given and define $y_h := S_h u_h$ and $p_h := S_h^\ast (z - y_h)$. Let $0 < h \leq h_{\max}$ with $h_{\max} > 0$. Then there exists a constant $c > 0$ independent from $h$ such that
\begin{align*}
\|u_h - \hat u\| \leq \; &  c \left( 1+ \frac{1}{\alpha} \right) \left\| u_h - P_\Uad \left( \frac{1}{\alpha}p_h  + \lambda \right) \right\|.
\end{align*}
\end{theorem}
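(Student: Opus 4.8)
The plan is to reduce the a-posteriori estimate to the known stability/perturbation result for the strongly convex subproblem \eqref{eq:subprob_abstract}, namely a variational inequality argument combined with the firm nonexpansiveness of the projection $P_\Uad$. First I would write down the first-order optimality condition for the exact solution $\hat u$ of \eqref{eq:subprob_abstract}: since the reduced objective is $\frac{\alpha}{2}\|u\|^2 - \alpha(\lambda,u) + \frac12\|S_h u - z\|_Y^2$, the gradient is $\alpha u - \alpha\lambda - S_h^\ast(z - S_h u) = \alpha u - \alpha\lambda - \hat p$, where $\hat p := S_h^\ast(z - S_h\hat u)$, and the projection formula gives the fixed-point characterization
\[
\hat u = P_\Uad\!\left( \frac{1}{\alpha}\hat p + \lambda \right).
\]
This is the standard equivalence between the variational inequality $(\alpha\hat u - \alpha\lambda - \hat p,\, u - \hat u) \ge 0$ for all $u\in\Uad$ and the projection identity.

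Next, using the $1$-Lipschitz (nonexpansive) property of $P_\Uad$ on $L^2(\Omega)$, I estimate
\[
\|u_h - \hat u\| \le \left\| u_h - P_\Uad\!\left(\frac1\alpha p_h + \lambda\right)\right\| + \left\| P_\Uad\!\left(\frac1\alpha p_h + \lambda\right) - P_\Uad\!\left(\frac1\alpha \hat p + \lambda\right)\right\| \le \left\| u_h - P_\Uad\!\left(\frac1\alpha p_h + \lambda\right)\right\| + \frac1\alpha \|p_h - \hat p\|.
\]
The remaining task is to bound $\|p_h - \hat p\| = \|S_h^\ast(z - S_h u_h) - S_h^\ast(z - S_h\hat u)\| \le \|S_h^\ast\|\,\|S_h\|\,\|u_h - \hat u\| \le c\,\|u_h - \hat u\|$, where the constant $c$ is uniform in $h$ by Lemma \ref{lemma:norm_Sh_bounded} (this is precisely where that lemma enters, as the hint notes). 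Substituting this back gives
\[
\|u_h - \hat u\| \le \left\| u_h - P_\Uad\!\left(\frac1\alpha p_h + \lambda\right)\right\| + \frac{c}{\alpha}\|u_h - \hat u\|,
\]
which is not yet of the claimed form because the constant $c/\alpha$ need not be less than one; a naive rearrangement would produce a factor $(1 - c/\alpha)^{-1}$ that blows up as $\alpha\to 0$. This is the main obstacle, and it is exactly what Theorem 2.2 of \cite{roesch2014} is designed to circumvent: rather than the crude Lipschitz bound above, one exploits the monotonicity of $u \mapsto \alpha u - \hat p(u)$ together with the structure of the projection to obtain a bound with the explicit prefactor $c(1 + 1/\alpha)$ on the residual $\|u_h - P_\Uad(\frac1\alpha p_h + \lambda)\|$ and no residual term on the left.

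Concretely, I would invoke \cite[Theorem 2.2]{roesch2014} with the role of the "state operator" played by $S_h$ (continuous, with norm bounded uniformly in $h$ by Lemma \ref{lemma:norm_Sh_bounded}), the Tikhonov parameter $\alpha = \alpha_k$, and the shift $\lambda = \lambda_{k-1}$ absorbed into the linear term; the cited theorem yields directly the perturbation estimate $\|u_h - \hat u\| \le c(1 + 1/\alpha)\|u_h - P_\Uad(\frac1\alpha p_h + \lambda)\|$ with $c$ depending only on $\|S_h\|$ (hence uniform for $0 < h \le h_{\max}$) and not on $u_h$, $\lambda$, or $\alpha$. The only points requiring care are checking that the hypotheses of \cite[Theorem 2.2]{roesch2014} apply verbatim to the functional in \eqref{eq:subprob_abstract} — in particular that the linear perturbation $-\alpha(\lambda, u)$ is covered — and tracking that the constant genuinely does not depend on $h$, which follows from the uniform bound $\|S_h\| \le C$. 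This completes the proof.
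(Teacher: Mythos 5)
Your proposal ultimately rests on exactly the same two ingredients the paper uses: the perturbation estimate of \cite[Theorem 2.2]{roesch2014} applied to the discretized subproblem \eqref{eq:subprob_abstract}, combined with the uniform bound $\|S_h\|\leq C$ from Lemma \ref{lemma:norm_Sh_bounded} to make the constant independent of $h$ for $0<h\leq h_{\max}$. The paper gives no further detail than this citation, so your argument (including the correct observation that the naive nonexpansiveness bound would only yield an unusable factor $(1-c/\alpha)^{-1}$) is consistent with, and slightly more explicit than, the paper's own proof.
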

}

This results allows us to estimate the distance to the exact solution of the subproblem. Note that the problem \eqref{eq:subprob_abstract} is uniquely solvable if $\alpha > 0$, see \cite{wachsmuth2016}.  

For abbreviation we set
{
\begin{align*}
\mathcal{B}(\alpha,\lambda,u_h)  :=&   \left(  1 + \frac{1}{\alpha} \right)  \left\| u_h - P_\Uad \left( \frac{1}{\alpha}S_h^\ast(z - S_h u_h )  + \lambda \right) \right\|.
\end{align*}

Let $u \in L^2(\Omega)$ be an approximate solution to the discretized subproblem  \eqref{eq:subprob_abstract}. The quantity $\mathcal{B}(\alpha, \lambda, u)$ then is an upper bound for the accuracy of $u$. This is part of the next result.  The proof follows directly with Lemma \ref{lemma:norm_Sh_bounded} and Theorem \ref{thm:apostestimate}.
\begin{lemma}\label{lemma:a_post_est_subprob}
Assume that $0 < h \leq h_{\max}$. Let $\hat u$ be the solution of the discretized subproblem \eqref{eq:subprob_abstract}. Then there exists a constant $c > 0$ independent from $h$ such that the following implication holds for all $u \in L^2(\Omega)$ and $\eps \geq 0$:
\[
 \mathcal{B}(\alpha, \lambda, u) \leq \eps \implies \|u - \hat u\| \leq c \eps.
\]
\end{lemma}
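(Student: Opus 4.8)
The plan is to derive this directly from Theorem \ref{thm:apostestimate} by unwinding the definition of $\mathcal{B}$. First I would set $u_h := u$ in Theorem \ref{thm:apostestimate}, so that $y_h = S_h u$ and $p_h = S_h^\ast(z - y_h) = S_h^\ast(z - S_h u)$. Then the conclusion of that theorem reads
\[
\|u - \hat u\| \leq c\left(1 + \frac{1}{\alpha}\right)\left\|u - P_\Uad\left(\frac{1}{\alpha} S_h^\ast(z - S_h u) + \lambda\right)\right\|,
\]
and the right-hand side is precisely $c\,\mathcal{B}(\alpha,\lambda,u)$ by definition. So under the hypothesis $\mathcal{B}(\alpha,\lambda,u) \leq \eps$ we immediately obtain $\|u - \hat u\| \leq c\eps$.

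The only genuine point to check is that the constant $c$ can be taken independent of $h$. This is where Lemma \ref{lemma:norm_Sh_bounded} enters: the constant produced by Theorem 2.2 of \cite{roesch2014} (and hence by Theorem \ref{thm:apostestimate}) depends on the problem data only through the operator norm $\|S_h\|$ (and on $\alpha$, $\lambda$, which are fixed for the subproblem). Since $\|S_h\| \leq \delta(h_{\max}) + \|S\|$ uniformly for $0 < h \leq h_{\max}$, the constant is bounded uniformly in $h$, and we may absorb everything — including the factor $(1 + 1/\alpha)$ if one wishes, although here it is natural to keep it inside $\mathcal{B}$ — into a single generic constant $c$.

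There is essentially no obstacle: the statement is a cosmetic repackaging of Theorem \ref{thm:apostestimate}. The only thing requiring a moment's care is making explicit that the hypotheses of Theorem \ref{thm:apostestimate} are met — namely that $u \in L^2(\Omega)$ is arbitrary (which is allowed) and that $0 < h \leq h_{\max}$ (which is assumed) — and confirming that the $\eps = 0$ case is covered (if $\mathcal{B}(\alpha,\lambda,u) = 0$ then $\|u - \hat u\| = 0$, consistent with the uniqueness of $\hat u$). I would therefore write the proof in two or three sentences: invoke Theorem \ref{thm:apostestimate} with $u_h = u$, identify the right-hand side with $c\,\mathcal{B}(\alpha,\lambda,u)$, cite Lemma \ref{lemma:norm_Sh_bounded} for $h$-independence of $c$, and conclude.
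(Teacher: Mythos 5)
Your proposal is correct and coincides with the paper's own argument: the lemma is stated there as following "directly with Lemma \ref{lemma:norm_Sh_bounded} and Theorem \ref{thm:apostestimate}", i.e.\ exactly your step of setting $u_h = u$, identifying the right-hand side of Theorem \ref{thm:apostestimate} with $c\,\mathcal{B}(\alpha,\lambda,u)$, and using the uniform bound on $\|S_h\|$ for the $h$-independence of the constant. Nothing further is needed.
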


Let us close this section with the following remark. As mentioned in \cite{Hinze2005} the solution of the discretized subproblem \eqref{eq:subprob_abstract} can be approximated with arbitrary accuracy. This will play a role in the analysis presented in the next section.

}

\section{Inexact Bregman iteration}
\label{sec:inexact_bregman}
Solving the subproblem
$$\text{Minimize} \quad \frac{1}{2}\|Su-z\|_Y^2 + \alpha_{k}  D^{\lambda_{k-1}}(u,u_{k-1})$$
exactly is very costly and in general not possible. We therefore suggest the following inexact Bregman iteration which can be interpreted as an inexact version of Algorithm \ref{alg:MinEx}.

Inexact Bregman iterations are analysed in the literature, see e.g. \cite{eckstein98, tichatschke2004,Kiwiel1997,Benfenati2013} for a finite dimensional approach, and for an abstract Banach space setting, see \cite{Sabach2010b}.

{
	\begin{myalg}\label{alg:MinInEx}
		Let $u_0^\tin = P_\Uad(0)  \in \Uad$, $\lambda_0^\tin=0  \in \partial J(u_0)$ and $k=1$.
		\begin{enumerate}
			\item  Find $u_k^\tin$ with $y_k^\tin = {S_h} u_k^\tin$ and $p_k^\tin = {S_h}^\ast(z - {S_h}u_k^\tin)$ such that 
			\[\mathcal{B}(\alpha_k, \lambda_{k-1}^\tin, u_k^\tin) \leq \eps_k\]
			\item Set
			\[\lambda_k^\tin = \sum\limits_{i=1}^k \frac{1}{\alpha_i} S_h^\ast (z-{S_h}u_i^\tin)\]
			\item Set $k:=k+1$, go back to 1.
		\end{enumerate}
	\end{myalg}
}

Here $\eps_k \geq 0$ is a given sequence of positive real numbers controlling the accuracy of the approximate solution $u_k^\tin$. For $\eps_k = 0$ {for all $k \in \N$ and $h=0$} Algorithm \ref{alg:MinEx} is obtained. \\

{The analysis of Algorithm \ref{alg:MinEx} presented in \cite{wachsmuth2016} is based on the fact that $\lambda_k \in \partial J(u_k)$. This is guaranteed by the construction of $\lambda_k$. However, since $S_h \neq S$ and $\eps_k > 0$ in general, we cannot expect that   $\lambda_k^\tin \not\in \partial J(u_k^\tin)$ holds.\\
	
Before we start to establish robustness results we want to give an overview over the different auxiliary problems we are going to use. Furthermore we want to introduce and clarify our notation.}

{
\subsection{Notation and auxiliary results}
The aim of this section is to summarize the most important notations and abbreviations. Our aim is to solve the \emph{unregularized problem}
\[
\min\limits_{u \in \Uad} \quad \frac{1}{2} \|Su - z\|_Y^2.
\]
This problem is solvable and we want to specify a solution $u^\dagger$. We assume, that this function satisfies one of the regularity assumptions \ref{ass:SC} or \ref{ass:ActiveSet}. In Algorithm \ref{alg:MinEx} we have to solve the following \emph{regularized problem}. We will refer to this as subproblem
\begin{equation}\label{eq:not_1}
\min\limits_{u \in \Uad} \quad \frac{1}{2} \|Su - z\|_Y^2 + \frac{\alpha_{k+1}}{2}\|u\|^2 - \alpha_{k+1}(\lambda, u),
\end{equation}
with some $\lambda \in L^2(\Omega)$ and $\alpha_{k+1} > 0$. Here the (exact) unique solution is denoted with $u_{k+1}^\tex$.  The superscript \emph{ex} stands for \emph{exact solution}.

However, since the operator $S$ is not computable in general, we introduced the operator $S_h$, which is an approximation of $S$. We now replace $S$ with $S_h$ in \eqref{eq:not_1} and obtain the \emph{discretized subproblem}
\begin{equation*}
\min\limits_{u \in \Uad} \quad \frac{1}{2} \|S_hu - z\|_Y^2 + \frac{\alpha_{k+1}}{2}\|u\|^2 - \alpha_{k+1}(\lambda, u).
\end{equation*}
Again this problem is unique solvable and its solution is denoted with $u_{k+1,h}^\tex$. The subscript $h$ indicates that it is a \emph{discrete} solution. Under suitable assumptions we can estimate the discretization error between $u_{k+1}^\tex$ and $u_{k+1,h}^\tex$. This is done in Theorem \ref{lemma:subprob_ex_error}.

 Please note that neither $u_{k+1}^\tex$ nor $u_{k+1,h}^\tex$ are computed during the algorithm. As mentioned above we can approximate $u_{k+1,h}^\tex$ with arbitrary precision. So we compute an \emph{inexact} solution of \eqref{eq:subprob_abstract}, which is denoted with $u_{k+1}^\tin$. We use the function $\mathcal{B}$ to measure the accuracy.

To control the accuracy during the algorithm we introduce a sequence $(\eps_k)_k$ of positive real values. In each iteration we now search for a function $u_{k+1}^\tin \in \Uad$ such that $\mathcal{B}(\alpha, \lambda, u_{k+1}^\tin) \leq \eps_{k+1}$.

In the end we want to estimate the error $\|u^\dagger - u_{k}^\tin\|$. This is done by triangular inequality
\begin{equation}\label{eq:triangular}
\|u_k^\tin - u^\dagger\| \leq \overbrace{\|u_k^\tin - u_{k,h}^\tex\|}^{(I)} + \overbrace{\|u_{k,h}^\tex - u_k^\tex\|}^{(II)} + \overbrace{\|u_k^\tex - u^\dagger\|}^{(III)}.
\end{equation}
Note that $(I)$ is controlled by the accuracy $\eps_k$ and $(II)$ is limited by the discretization error. It remains to estimate the regularization error $(III)$ with the help of the regularity assumptions.

We also want to recall the following definitions, as they will appear quite often.
\[
\gamma_k = \sum\limits_{i=1}^k \frac{1}{\alpha_i}, \quad \rho_k^2 = \alpha_k^{-1}(1 + \alpha_k^{-1}).
\]

}

\subsection{Convergence under Assumption \ref{ass:SC}}

We now start to analyse Algorithm \ref{alg:MinInEx} with $u^\dagger$ satisfying Assumption \ref{ass:SC}.

{

\begin{theorem}\label{thm:convergence_SC}
Let $u^\dagger$ satisfy Assumption \ref{ass:SC} and let $(\eps_k)_k$ be a sequence of positive real numbers. Furthermore let $h > 0$ be given and let $(u_k^\tin)_k$ be a sequence generated by Algorithm \ref{alg:MinInEx}. Then we have the estimate
\begin{align*}
\sum\limits_{i=1}^k  \frac{1}{\alpha_{i}}\|u_{i}^\tin - u^\dagger\|^2 \leq c\left( 1 +  \sum\limits_{i=1}^k R_i +  \sum\limits_{i=1}^k H_i \right)
\end{align*}

with the abbreviations
\begin{align*}
R_i &:= \frac{\eps_{i}}{\alpha_{i}} + \frac{\eps_{i}^2}{\alpha_{i}^2}+ \frac{\gamma_{i-1} \eps_{i}}{\alpha_{i}} + \frac{\eps_i^2}{\alpha_i},\\
H_i &:= \delta(h) \left[ \frac{\rho_{i}}{\alpha_i} + \frac{\gamma_{i-1}}{\alpha_i} + \frac{\gamma_{i-1} \rho_i}{\alpha_i} \right] + \delta(h)^2 \left[ \frac{\rho_i^2}{\alpha_i^2} +\frac{\rho_i^2}{\alpha_i} \right].
\end{align*}

\end{theorem}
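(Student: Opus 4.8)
The plan is to control the left-hand side via the triangular decomposition \eqref{eq:triangular}, bounding $(I)$ by the a-posteriori estimate, $(II)$ by the discretization error, and $(III)$ by re-running the convergence analysis of \cite{wachsmuth2016} with a perturbed subgradient. For $(I)$ we apply Lemma \ref{lemma:a_post_est_subprob}: since $\mathcal{B}(\alpha_i, \lambda_{i-1}^\tin, u_i^\tin) \leq \eps_i$, we get $\|u_i^\tin - u_{i,h}^\tex\| \leq c \eps_i$, so after multiplying by $\alpha_i^{-1}$ and summing, the contribution of $(I)^2$ is of order $\sum_i \alpha_i^{-1}\eps_i^2$, which appears inside $R_i$. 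For $(II)$, Lemma \ref{lemma:subprob_ex_error} (applied with $\lambda = \lambda_{i-1}^\tin$, which is in $L^2$) gives $\alpha_i^{-1}\|y_{i,h}^\tex - y_i^\tex\|_Y^2 + \|u_{i,h}^\tex - u_i^\tex\|^2 \leq c\rho_i^2 \delta(h)^2$; summing $\alpha_i^{-1}$ times the control part gives a term $\sum_i \alpha_i^{-1}\rho_i^2\delta(h)^2$, which lives inside $H_i$.

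The heart of the matter is term $(III)$, the regularization error between the \emph{exact} solution $u_i^\tex$ of the subproblem \eqref{eq:not_1} with data $\lambda = \lambda_{i-1}^\tin$ and the target $u^\dagger$. Here I would mimic the telescoping/Bregman-monotonicity argument of \cite{wachsmuth2016}, but with two discrepancies to track. First, the subgradient actually used, $\lambda_{i-1}^\tin = \sum_{j=1}^{i-1}\alpha_j^{-1}S_h^\ast(z - S_h u_j^\tin)$, differs from the ``ideal'' $\sum_{j=1}^{i-1}\alpha_j^{-1}S^\ast(z - S u_j^\dagger)$; the difference splits into a discretization part controlled by Lemma \ref{lemma:estimate_lambda_different_meshes} (contributing $\gamma_{i-1}\delta(h)$, hence the terms $\gamma_{i-1}\delta(h)/\alpha_i$ and $\gamma_{i-1}\rho_i\delta(h)/\alpha_i$ in $H_i$) and an inexactness part controlled by summing the $\eps_j$'s and using Lemma \ref{lemma:norm_Sh_bounded} for the operator bound (contributing $\gamma_{i-1}\eps_i/\alpha_i$ in $R_i$). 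Second, $\lambda_{i-1}^\tin$ need not lie in $\partial J(u_{i-1}^\tin)$, so the nonnegativity of the Bregman distance is no longer free; I would instead use the explicit form $J(u) = \tfrac12\|u\|^2 + I_{\Uad}(u)$, expand $D$, use the optimality system of \eqref{eq:not_1} (Theorem \ref{thm:opt_udagger}-type variational inequality with the perturbed data), and absorb the sign-indefinite cross terms using Young's inequality, with the $\alpha_i^{-1}\|y_{i,h}^\tex - y_i^\tex\|_Y^2$-type slack from Lemma \ref{lemma:subprob_ex_error} and the source representation $u^\dagger = P_\Uad(S^\ast w)$ from Assumption \ref{ass:SC}.

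Concretely, I would write the defining inequality for $u_i^\tex$ tested against $u^\dagger$, add the analogous inequality from the variational characterization of $u^\dagger$, and obtain a recursion of the form
\begin{equation*}
D_i + c\,\alpha_i^{-1}\|u_i^\tex - u^\dagger\|^2 \leq D_{i-1} + (\text{error terms}),
\end{equation*}
where $D_i$ is a suitable (nonnegative) Bregman-type quantity built from $\lambda_i^\tin$ and $u_i^\tin$ corrected by the discrepancies, and the error terms at step $i$ are bounded by $c(R_i + H_i)$ after repeated use of Young's inequality, Lemma \ref{lemma:a_post_est_subprob}, Lemma \ref{lemma:subprob_ex_error}, Lemma \ref{lemma:estimate_lambda_different_meshes} and the uniform bound on $\|S_h\|$. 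Telescoping from $1$ to $k$ and using $D_0 = 0$, $D_k \geq 0$ then yields $\sum_{i=1}^k \alpha_i^{-1}\|u_i^\tex - u^\dagger\|^2 \leq c(1 + \sum R_i + \sum H_i)$, and combining with the bounds on $(I)$ and $(II)$ via $\|a+b+c\|^2 \leq 3(\|a\|^2+\|b\|^2+\|c\|^2)$ gives the claim.

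The main obstacle I anticipate is the bookkeeping for term $(III)$: since $\lambda_{i-1}^\tin \notin \partial J(u_{i-1}^\tin)$ in general, the clean Bregman telescoping of \cite{wachsmuth2016} breaks, and one must carefully identify a modified monotone quantity $D_i$ whose ``defect'' from being a genuine Bregman distance is itself controlled by the already-budgeted error terms — getting the powers of $\gamma_{i-1}$, $\rho_i$, $\alpha_i$ and the mixed $\eps\delta(h)$ cross terms to land exactly inside $R_i + H_i$ (rather than producing, say, a stray $\gamma_{i-1}^2$) is where the delicate estimation lies, and is presumably why $R_i$ and $H_i$ have the specific shape stated.
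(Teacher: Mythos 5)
Your proposal is correct and follows essentially the same route as the paper: the triangular splitting \eqref{eq:triangular}, Lemma \ref{lemma:a_post_est_subprob} for $(I)$, Lemma \ref{lemma:subprob_ex_error} for $(II)$, and for $(III)$ the addition of the two variational inequalities with the perturbed subgradient, Lemma \ref{lemma:estimate_lambda_different_meshes} for the $S_h$-versus-$S$ discrepancy, and Young's inequality to absorb the cross terms. The only piece you leave unspecified is the concrete telescoping quantity, which in the paper is $\tfrac{1}{2}\|v_k^\tin - w\|_Y^2$ with $v_k^\tin := \sum_{j=1}^{k}\alpha_j^{-1}S(u^\dagger - u_j^\tin)$ (so the telescoping starts at $\tfrac12\|w\|_Y^2$, which is where the constant $1$ in the bound comes from, rather than from $D_0=0$); with that choice the identity \eqref{eq:thminex_4} makes the argument go through without any appeal to subdifferential membership of $\lambda_k^\tin$.
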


\begin{proof}
The proof is based on the splitting of the error $\|u_k^\tin - u^\dagger\|$  in three parts, see \eqref{eq:triangular}
\[
\|u_k^\tin - u^\dagger\| \leq \overbrace{\|u_k^\tin - u_{k,h}^\tex\|}^{(I)} + \overbrace{\|u_{k,h}^\tex - u_k^\tex\|}^{(II)} + \overbrace{\|u_k^\tex - u^\dagger\|}^{(III)}.
\]
Here $(I)$ is controlled by the given accuracy $\eps_k$ and $(II)$ can be estimated with the help of Lemma \ref{lemma:subprob_ex_error}:
\begin{align*}
(I) = \|u_k^\tin - u_{k,h}^\tex\| &\leq c\eps_k,\\
(II) = \|u_{k,h}^\tex - u_k^\tex\| &\leq c\rho_{k} \delta(h).
\end{align*}

It is left to estimate $(III)$. We start with adding the optimality conditions for $u_{k+1}^\tex$ and $u^\dagger$, see \cite[Lemma 3.1]{wachsmuth2016} and Theorem \ref{thm:opt_udagger},
\begin{align*}
\big( S^\ast(Su_{k+1}^\tex - z) + \alpha_{k+1}( u_{k+1}^\tex - \lambda_k^\tin ), v - u_{k+1}^\tex  \big) & \geq 0, \quad \forall v \in \Uad,\\
\big( S^\ast(Su^\dagger - z), v - u^\dagger \big) & \geq 0, \quad \forall v \in \Uad.
\end{align*}
Addition yields
\begin{equation}\label{eq:thminex_1}
\frac{1}{\alpha_{k+1}} \|S (u_{k+1}^\tex - u^\dagger)\|_Y^2 + \|u_{k+1}^\tex - u^\dagger\|^2 \leq \big( u^\dagger - \lambda_k^\tin , u^\dagger - u_{k+1}^\tex \big).
\end{equation}
For the term $(u^\dagger, u^\dagger - u_{k+1}^\tex)$ we estimate with help of the source condition \ref{ass:SC}
\begin{equation}\label{eq:thminex_3}
\begin{split}
(u^\dagger, u^\dagger - u_{k+1}^\tex) &= (u^\dagger, u^\dagger - u_{k+1}^\tin)\\
&\quad+ (u^\dagger, u_{k+1}^\tin - u_{k+1,h}^\tex) + (u^\dagger, u_{k+1,h}^\tex - u_{k+1}^\tex)\\
&\leq (S^\ast w, u^\dagger - u_{k+1}^\tin) + c( \eps_{k+1} + \rho_{k+1} \delta(h) ).
\end{split}
\end{equation}
To estimate the remaining term $(- \lambda_k^\tin , u^\dagger - u_{k+1}^\tex \big))$ we introduce the quantity
\[
v_k^\tin := \sum\limits_{i=1}^k \frac{1}{\alpha_i} S(u^\dagger - u_i^\tin).
\]
This quantity will be helpful in the subsequent analysis. Let us sketch the next steps. First we will replace the operator $S_h$ by $S$ in order to apply the first order conditions for $u^\dagger$. Second we eliminate the unknown exact solution $u_{k+1}^\tex$ by its approximation $u_{k+1}^\tin$. For the first part we make use of Lemma \ref{lemma:estimate_lambda_different_meshes} and estimate
\begin{equation}\label{eq:thminex_0}
\begin{split}
(-\lambda_k^\tin, u^\dagger &- u_{k+1}^\tex) = \left( \sum\limits_{i=1}^k  \frac{1}{\alpha_i}  S_h^\ast ( S_h u_i^\tin - z ), u^\dagger - u_{k+1}^\tex   \right)\\
&=  \left( \sum\limits_{i=1}^k  \frac{1}{\alpha_i}  S^\ast ( S u_i^\tin - z ), u^\dagger - u_{k+1}^\tex   \right)\\
&+\quad \left( \sum\limits_{i=1}^k  \frac{1}{\alpha_i}  S_h^\ast ( S_h u_i^\tin - z ) - \sum\limits_{i=1}^k  \frac{1}{\alpha_i}  S^\ast ( S u_i^\tin - z ), u^\dagger - u_{k+1}^\tex   \right)\\
&=  \left( \sum\limits_{i=1}^k  \frac{1}{\alpha_i}  S^\ast ( S u_i^\tin - z ), u^\dagger - u_{k+1}^\tex   \right) + c \gamma_k \delta(h).
\end{split}
\end{equation}
Now we eliminate the variable $z$ by using the first order conditions for $u^\dagger$ presented in Theorem \ref{thm:opt_udagger}
\begin{equation}\label{eq:eliminate_z}
\begin{split}
&\left(\sum\limits_{i=1}^k \frac{1}{\alpha_i} ( Su_i^\tin -z, S(u^\dagger - u_{k+1}^\tex) \right)\\
&= \sum\limits_{i=1}^k \frac{1}{\alpha_i} (Su_i^\tin - Su^\dagger, S(u^\dagger - u_{k+1}^\tex)) + \sum\limits_{i=1}^k \frac{1}{\alpha_i} \underbrace{(Su^\dagger - z, S(u^\dagger - u_{k+1}^\tex))}_{\leq 0}\\
&\leq \sum\limits_{i=1}^k \frac{1}{\alpha_i} \big( S(u_i^\tin - u^\dagger), S(u^\dagger - u_{k+1}^\tex) \big).
\end{split}
\end{equation}
Since the variable $u_{k+1}^\tex$ is unknown we replace it by its approximation $u_{k+1}^\tin$
\begin{equation}\label{eq:thminex_2}
\begin{split}
\sum\limits_{i=1}^k \frac{1}{\alpha_i} &\big( S(u_i^\tin - u^\dagger), S(u^\dagger - u_{k+1}^\tex) \big)\\
&= \sum\limits_{i=1}^k \frac{1}{\alpha_i} \big( S(u_i^\tin - u^\dagger), S(u^\dagger - u_{k+1}^\tin) \big)\\
&\quad+ \sum\limits_{i=1}^k \frac{1}{\alpha_i} \big( S(u_i^\tin - u^\dagger), S(u_{k+1}^\tin - u_{k+1,h}^\tex) \big)\\
&\quad+ \sum\limits_{i=1}^k \frac{1}{\alpha_i} \big( S(u_i^\tin - u^\dagger), S(u_{k+1,h}^\tex - u_{k+1}^\tex) \big)\\
& \leq \alpha_{k+1} ( -v_k^\tin , v_{k+1}^\tin - v_k^\tin ) + c \gamma_k(\eps_{k+1} + \rho_{k+1} \delta(h)) .
\end{split}
\end{equation}
Now we use \eqref{eq:eliminate_z} and \eqref{eq:thminex_2} in \eqref{eq:thminex_0} and obtain
\begin{equation}\label{eq:thminex_6}
\begin{split}
(- \lambda_k^\tin, u^\dagger - u_{k+1}^\tex) &\leq \alpha_{k+1} (-v_k^\tin, v_{k+1}^\tin - v_k^\tin)\\
&\quad+ c \big (\gamma_k \eps_{k+1} + \delta(h) \gamma_k \rho_{k+1} + \delta(h) \gamma_k  \big).
\end{split}
\end{equation}

In the next step we plug \eqref{eq:thminex_3} and \eqref{eq:thminex_6} in \eqref{eq:thminex_1}
\begin{align*}
\frac{1}{\alpha_{k+1}^2} &\|S(u_{k+1}^\tex - u^\dagger)\|_Y^2 + \frac{1}{\alpha_{k+1}}\|u_{k+1}^\tex - u^\dagger\|^2\\
&\leq \frac{1}{\alpha_{k+1}} (u^\dagger , u^\dagger - u_{k+1}^\tex) + \frac{1}{\alpha_{k+1}}(- \lambda_k^\tin, u^\dagger - u_{k+1}^\tex)\\
&\leq  \left( w, \frac{1}{\alpha_{k+1}}S( u^\dagger - u_{k+1}^\tin )  \right) +  (-v_k^\tin, v_{k+1}^\tin - v_k^\tin)\\
&\quad  + c \left( \frac{\eps_{k+1}}{\alpha_{k+1}}  + \frac{\gamma_k \eps_{k+1}}{\alpha_{k+1}} \right) +  c \delta(h) \left( \frac{\rho_{k+1}}{\alpha_{k+1}} + \frac{\gamma_k}{\alpha_{k+1}} + \frac{\gamma_k \rho_{k+1}}{\alpha_{k+1}} \right)\\
&\leq  (w-v_k^\tin, v_{k+1}^\tin - v_k^\tin)\\
&\quad  + c \left( \frac{\eps_{k+1}}{\alpha_{k+1}}  + \frac{\gamma_k \eps_{k+1}}{\alpha_{k+1}} \right) +  c \delta(h) \left( \frac{\rho_{k+1}}{\alpha_{k+1}} + \frac{\gamma_k}{\alpha_{k+1}} + \frac{\gamma_k \rho_{k+1}}{\alpha_{k+1}} \right).
\end{align*}

Before we proceed we need two additional results. A calculation reveals that
\begin{equation}\label{eq:thminex_4}
(w-v_k^\tin, v_{k+1}^\tin - v_k^\tin) = \frac{1}{2}\|v_k^\tin-w\|_Y^2 - \frac{1}{2}\|v_{k+1}^\tin-w\|_Y^2 + \frac{1}{2}\|v_{k+1}^\tin - v_k^\tin\|_Y^2
\end{equation}
holds.  Second we obtain
\begin{equation}\label{eq:tech_est}
\begin{split}
\|S(u_{k+1}^\tex - u_{k+1}^\tin)\|_Y &\leq \|S(u_{k+1}^\tin - u_{k+1,h}^\tex)\|_Y + \|S(u_{k+1,h}^\tex - u_{k+1}^\tex)\|_Y\\
&\leq c( \eps_{k+1} + \rho_{k+1} \delta(h) ).
\end{split}
\end{equation}

Furthermore we use Young's inequality and \eqref{eq:tech_est} to establish for $\tau > 1$:
\begin{equation}\label{eq:thminex_5}
\begin{split}
&\frac{1}{2}\|v_{k+1}^\tin - v_k^\tin\|^2_Y = \frac{1}{2 \alpha_{k+1}^2} \|S(u_{k+1}^\tin - u^\dagger)\|_Y^2 \\
&\leq \frac{1}{2 \alpha_{k+1}^2} \left(  \left(1 + \frac{1}{\tau}\right) \|S(u_{k+1}^\tex - u^\dagger)\|_Y^2 + \left(1 + \tau\right)\|S(u_{k+1}^\tex - u_{k+1}^\tin)\|_Y^2 \right)\\
& \leq \frac{1}{2\alpha_{k+1}^2} \left(1 + \frac{1}{\tau}\right) \|S (u_{k+1}^\tex - u^\dagger)\|_Y^2 + \frac{c }{\alpha_{k+1}^2} \left(\eps_{k+1}^2 + \rho_{k+1}^2 \delta(h)^2 \right).
\end{split}
\end{equation}
This now yields
\begin{align*}
\frac{c_\tau}{\alpha_{k+1}^2} &\|S(u_{k+1}^\tex - u^\dagger)\|_Y^2  + \frac{1}{\alpha_{k+1}}\|u_{k+1}^\tex - u^\dagger\|^2 \\
&\leq \frac{1}{2}\|v_k^\tin - w\|_Y^2 - \frac{1}{2}\|v_{k+1}^\tin - w\|_Y^2 +c \left(\tilde R_{k+1} + \delta(h) \tilde H^{(1)}_{k+1} + \delta(h)^2 \tilde H^{(2)}_{k+1} \right),
\end{align*}
with $c_\tau = 1 - \frac{1}{2}\left(1 + \frac{1}{\tau}\right) > 0$ and the abbreviations
\begin{align*}
\tilde R_i &:=   \frac{\eps_{i}}{\alpha_{i}} + \frac{\eps_{i}^2}{\alpha_{i}^2}+ \frac{\gamma_{i-1} \eps_{i}}{\alpha_{i}},\\
\tilde H_i^{(1)} &:= \frac{\rho_{i}}{\alpha_i} + \frac{\gamma_{i-1}}{\alpha_i} + \frac{\gamma_{i-1} \rho_i}{\alpha_i},\\
\tilde H_i^{(2)} &:=  \frac{\rho_i^2}{\alpha_i^2}.
\end{align*}

  Summation over $k$ finally reveals
\begin{align*}
c_\tau \sum\limits_{i=1}^k \frac{1}{\alpha_{i}^2} &\|S(u_{i}^\tex - u^\dagger)\|_Y^2  + \sum\limits_{i=1}^k  \frac{1}{\alpha_{i}}\|u_{i}^\tex - u^\dagger\|^2 \\
&\leq \frac{1}{2}\|w\|_Y^2  +c \sum\limits_{i=1}^k \tilde R_{i} + c \delta(h) \sum\limits_{i=1}^k \tilde H^{(1)}_{i} + c \delta(h)^2 \sum\limits_{i=1}^k  \tilde H^{(2)}_{i} ,
\end{align*}
where we used the convention $v_0^\tin = 0$. The result now follows by triangular inequality.

\end{proof}

Let us point out that the variables $R_i$ can be identified with the accuracy of the iterates and while the $H_i$ are only influenced by the discretization. This result above can now be interpreted in different ways.  First we start with the (theoretical) case that we can evaluate the operator $S$ and its dual $S^\ast$. This refers to the case where $h=0$.
\begin{corollary}\label{cor:SC_1}
Let $u^\dagger$ satisfy Assumption \ref{ass:SC} and let $(\eps_k)_k$ be a sequence of positive real numbers such that
\[
\sum\limits_{i=1}^\infty R_i < \infty.
\]
Furthermore assume that $S_h = S$ and let $(u_k^\tin)_k$ be a sequence generated by Algorithm \ref{alg:MinInEx}. Then we have $u_k^\tin \to u^\dagger$ in $L^2(\Omega)$.
\end{corollary}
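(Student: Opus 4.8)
The plan is to combine the main estimate of Theorem~\ref{thm:convergence_SC} in the special case $h=0$ (so that $\delta(h)=0$ and all the terms $H_i$ vanish) with a standard subsequence--diagonal argument, exactly as in the analysis of the exact algorithm in \cite{wachsmuth2016}. Setting $h=0$ in Theorem~\ref{thm:convergence_SC} gives
\[
\sum\limits_{i=1}^k \frac{1}{\alpha_i}\|u_i^\tin - u^\dagger\|^2 \leq c\Bigl(1 + \sum\limits_{i=1}^k R_i\Bigr),
\]
and since by hypothesis $\sum_{i=1}^\infty R_i < \infty$, the right-hand side is bounded uniformly in $k$. Hence the full series $\sum_{i=1}^\infty \alpha_i^{-1}\|u_i^\tin - u^\dagger\|^2$ converges. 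Because $(\alpha_k)_k$ is bounded, say $\alpha_k \le \bar\alpha$, this already forces $\gamma_k = \sum_{i=1}^k \alpha_i^{-1} \to \infty$, and the convergence of the weighted series together with the divergence of the weights $\alpha_k^{-1}$ yields $\liminf_{k\to\infty}\|u_k^\tin - u^\dagger\| = 0$ — in fact the weighted average of $\|u_i^\tin-u^\dagger\|^2$ tends to zero.

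To upgrade this to convergence of the whole sequence, I would establish a monotonicity (quasi-Fejér) property of $\|u_k^\tin - u^\dagger\|$ up to a summable perturbation. Revisiting the proof of Theorem~\ref{thm:convergence_SC} with $h=0$, the per-step inequality reads
\[
\frac{c_\tau}{\alpha_{k+1}^2}\|S(u_{k+1}^\tex - u^\dagger)\|_Y^2 + \frac{1}{\alpha_{k+1}}\|u_{k+1}^\tex - u^\dagger\|^2 \le \tfrac12\|v_k^\tin - w\|_Y^2 - \tfrac12\|v_{k+1}^\tin - w\|_Y^2 + c\,\tilde R_{k+1},
\]
so $\bigl(\tfrac12\|v_k^\tin - w\|_Y^2 + c\sum_{j>k}\tilde R_j\bigr)_k$ is nonincreasing and bounded below, hence convergent; combined with $\sum_k \alpha_{k+1}^{-1}\|u_{k+1}^\tex - u^\dagger\|^2 < \infty$ and the already-proven bound $\|u_{k+1}^\tin - u_{k+1}^\tex\| \le c(\eps_{k+1}+\rho_{k+1}\delta(h)) = c\,\eps_{k+1}$ with $\eps_k \to 0$ (which follows from $\sum R_i<\infty$, since $\eps_i/\alpha_i$ and hence a bounded multiple of $\eps_i$ is summable), one controls the oscillation of $\|u_k^\tin - u^\dagger\|$. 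More directly, I would argue: pick a subsequence along which $u_{k_j}^\tin \rightharpoonup \bar u$ weakly; by the general convergence theorem recalled in the excerpt (weak limit points of Algorithm~\ref{alg:MinEx}-type iterates are solutions of \eqref{eq:main_problem}, adapted to the inexact iteration via $\|u_k^\tin-u_k^\tex\|\le c\eps_k\to0$), $\bar u$ solves the unregularized problem; but under Assumption~\ref{ass:SC} one also needs the stronger conclusion that $\bar u = u^\dagger$. Here I would use the convergence of $\|v_k^\tin - w\|_Y$ (shown above) together with the identity $v_k^\tin = \sum_{i=1}^k \alpha_i^{-1} S(u^\dagger - u_i^\tin)$ and a Bregman/Fejér-type argument to pin down the limit, exactly mirroring \cite[Section~4]{wachsmuth2016}.

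The main obstacle, and the step requiring the most care, is extracting genuine (not merely subsequential or averaged) strong $L^2$-convergence of $(u_k^\tin)_k$ from the weighted-sum bound. The bound $\sum_i \alpha_i^{-1}\|u_i^\tin-u^\dagger\|^2 < \infty$ alone does not give $\|u_k^\tin-u^\dagger\|\to 0$ because the weights $\alpha_i^{-1}$ may be large; one genuinely needs a monotonicity structure. I expect the clean route is to prove that $\|u_k^\tex - u^\dagger\|$ is "almost monotone" — i.e. $\|u_{k+1}^\tex - u^\dagger\|^2 \le \|u_k^\tex-u^\dagger\|^2 + (\text{summable})$ — by exploiting the Bregman-distance monotonicity of the exact iteration perturbed by the inexactness terms $\tilde R_k$, then transfer this to $u_k^\tin$ via $\|u_k^\tin - u_k^\tex\|\le c\eps_k$. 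Since $\eps_k \to 0$ and a subsequence converges strongly to $u^\dagger$, almost-monotonicity forces the whole sequence to converge strongly to $u^\dagger$, which is the claim. All remaining manipulations (boundedness of $(\alpha_k)$, divergence of $\gamma_k$, summability bookkeeping for the $R_i$) are routine.
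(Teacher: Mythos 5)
Your derivation of the bound $\sum_{i=1}^k \alpha_i^{-1}\|u_i^\tin - u^\dagger\|^2 \le c\bigl(1+\sum_{i=1}^k R_i\bigr) \le C$ from Theorem \ref{thm:convergence_SC} with the $H_i$ terms removed is exactly the intended starting point, but the obstacle you then describe is illusory, and the workaround you sketch does not close the argument. You write that the weighted summability ``alone does not give $\|u_k^\tin-u^\dagger\|\to 0$ because the weights $\alpha_i^{-1}$ may be large'' --- this is backwards. The sequence $(\alpha_k)_k$ is by standing assumption bounded, say $\alpha_k \le M$, hence $\alpha_k^{-1} \ge M^{-1} > 0$, so
\[
\sum_{i=1}^\infty \|u_i^\tin - u^\dagger\|^2 \;\le\; M \sum_{i=1}^\infty \frac{1}{\alpha_i}\|u_i^\tin - u^\dagger\|^2 \;<\; \infty,
\]
and $\|u_k^\tin - u^\dagger\|\to 0$ follows because the general term of a convergent series tends to zero. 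Weights bounded away from zero make the weighted bound \emph{stronger} than the unweighted one; a difficulty would arise only if $\alpha_i \to \infty$, which is excluded. This one line is the entire proof.

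The remainder of your proposal --- the quasi-Fej\'er argument, the weak-subsequence extraction, and the claimed ``almost monotonicity'' $\|u_{k+1}^\tex - u^\dagger\|^2 \le \|u_k^\tex - u^\dagger\|^2 + (\text{summable})$ --- is therefore unnecessary, and as written it is also not a proof: the per-step inequality you quote yields monotonicity (up to summable perturbations) of $\|v_k^\tin - w\|_Y^2$, which controls the weighted residual averages $v_k^\tin$, not the distances $\|u_k^\tex - u^\dagger\|$. No such monotonicity of $\|u_k^\tex - u^\dagger\|$ is established anywhere in the paper, and obtaining it would require an additional Bregman-distance argument that your sketch only gestures at. So, taken as written, your proposal leaves its decisive step unproved, even though the assertion of the corollary is an immediate consequence of the estimate you had already obtained in your first display.
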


The other interesting case is, that we can solve the discretized subproblem exactly, i.e. $\eps_k = 0$ for all $k \in \N$. Here we obtain convergence in the following sense.

\begin{corollary}\label{cor:SC_2}
	Let $u^\dagger$ satisfy Assumption \ref{ass:SC}. Let $h_{\max} > 0$ be given and $\eps_k = 0$ for all $k \in \N$. Then there exists a constant C such that  for every $0 < h \leq h_{\max}$ there exists a stopping index $k(h)$ such that
	\[
	\sum\limits_{i=1}^{k(h)} H_i \leq C < \infty.
	\]
	and $k(h) \to \infty$ as $h \to 0$. Furthermore $u_{k(h)}^\tin \to u^\dagger$ as $h \to 0$.
\end{corollary}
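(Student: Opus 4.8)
The goal is to choose, for each mesh size $h \in (0, h_{\max}]$, a stopping index $k(h)$ so that the sum $\sum_{i=1}^{k(h)} H_i$ stays bounded while $k(h) \to \infty$ as $h \to 0$; convergence $u_{k(h)}^\tin \to u^\dagger$ then follows from Theorem \ref{thm:convergence_SC} together with Theorem \ref{thm:SC_strong_conv}. Since $\eps_k = 0$ for all $k$, the $R_i$ terms vanish, and Theorem \ref{thm:convergence_SC} gives
\[
\sum_{i=1}^k \frac{1}{\alpha_i}\|u_i^\tin - u^\dagger\|^2 \leq c\Big(1 + \sum_{i=1}^k H_i\Big),
\]
where $H_i = \delta(h)\,\tilde H_i^{(1)} + \delta(h)^2\big(\rho_i^2/\alpha_i^2 + \rho_i^2/\alpha_i\big)$ with $\tilde H_i^{(1)} = \rho_i/\alpha_i + \gamma_{i-1}/\alpha_i + \gamma_{i-1}\rho_i/\alpha_i$. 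The key observation is that each of the bracketed expressions multiplying $\delta(h)$ (resp. $\delta(h)^2$) is a sum of terms of the form $\alpha_i^{-1}$ times polynomials in $\gamma_{i-1}$, $\rho_i = \alpha_i^{-1/2}(1+\alpha_i^{-1})^{1/2}$; in particular, for a bounded sequence $(\alpha_k)_k$ bounded away from... well, here $\alpha_k$ may go to zero, but in any case $\sum_{i=1}^k H_i$ is, for each fixed $h$, a finite quantity, and more importantly it is bounded by $\delta(h)\cdot G(k)$ for an explicit nondecreasing function $G(k) := c\sum_{i=1}^k\big(\tilde H_i^{(1)} + \delta(h_{\max})(\rho_i^2/\alpha_i^2 + \rho_i^2/\alpha_i)\big)$ that does not depend on $h$ (using $\delta(h) \leq \delta(h_{\max})$ on the quadratic term). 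So $\sum_{i=1}^k H_i \leq \delta(h)\,G(k)$.

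Now define $k(h) := \max\{k \in \N : \delta(h)\,G(k) \leq 1\}$; this is well-defined and finite for each $h > 0$ because $G(k) \to \infty$ (each summand is positive and, one checks, the series diverges — alternatively, even if it converged one would simply take $k(h) = \infty$ eventually, but the honest statement needs $G$ unbounded, which holds because $\gamma_{i-1}/\alpha_i$ alone already has divergent sum whenever $\sum 1/\alpha_i = \infty$, the standing hypothesis implicit in $\gamma_k \to \infty$). By construction $\sum_{i=1}^{k(h)} H_i \leq 1 =: C$, giving the first claim. For $k(h) \to \infty$: as $h \to 0$ we have $\delta(h) \to 0$ (continuity of $\delta$ with $\delta(0) = 0$), so for any fixed $N$, once $\delta(h) \leq 1/G(N)$ we get $k(h) \geq N$; hence $k(h) \to \infty$.

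For the convergence $u_{k(h)}^\tin \to u^\dagger$: from Theorem \ref{thm:convergence_SC} with $\eps_k = 0$ and the bound just obtained,
\[
\sum_{i=1}^{k(h)} \frac{1}{\alpha_i}\|u_i^\tin - u^\dagger\|^2 \leq c(1 + C).
\]
I would then combine this with the triangular-inequality splitting \eqref{eq:triangular}: $\|u_{k(h)}^\tin - u^\dagger\| \leq \|u_{k(h)}^\tin - u_{k(h),h}^\tex\| + \|u_{k(h),h}^\tex - u_{k(h)}^\tex\| + \|u_{k(h)}^\tex - u^\dagger\|$. The first term is $\leq c\eps_{k(h)} = 0$. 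The second is $\leq c\rho_{k(h)}\delta(h)$ by Lemma \ref{lemma:subprob_ex_error}, which one needs to show tends to $0$ — this requires controlling $\rho_{k(h)}\delta(h)$, and here one uses that $\rho_{k(h)}^2 \leq \gamma_{k(h)}^2$-type bounds are dominated: indeed $\rho_k \leq \alpha_k^{-1}(1+\alpha_k^{-1})^{1/2}$ appears inside $G$, so $\delta(h)\rho_{k(h)}$ is bounded by a fixed multiple of $\delta(h)\sqrt{G(k(h))} \cdot (\text{const}) \leq \sqrt{\delta(h)}\cdot\sqrt{\delta(h)G(k(h))} \cdot(\text{const}) \leq \sqrt{\delta(h)}\cdot(\text{const}) \to 0$ after a Cauchy–Schwarz/rescaling argument. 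The third term, $\|u_{k(h)}^\tex - u^\dagger\| = \mathcal{O}(\gamma_{k(h)}^{-1/2})$ by Theorem \ref{thm:SC_strong_conv} (the exact-iterate rate), tends to $0$ because $k(h) \to \infty$ and hence $\gamma_{k(h)} \to \infty$.

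\textbf{Main obstacle.} The delicate point is making the second term $\rho_{k(h)}\delta(h) \to 0$ rigorous: $\rho_{k(h)}$ grows as $h \to 0$ (since $k(h) \to \infty$ and $\alpha_{k(h)} \to 0$ is allowed), so one needs the stopping rule to couple $k(h)$ to $h$ tightly enough that the growth of $\rho_{k(h)}$ is beaten by the decay of $\delta(h)$. The clean way is to build this directly into the definition of $k(h)$ — e.g. require not just $\delta(h)G(k) \leq 1$ but $\delta(h)\tilde G(k) \leq 1$ where $\tilde G(k) := \max(G(k), \rho_k^2, \gamma_k)$ — so that automatically $\delta(h)\rho_{k(h)}^2 \leq 1$, hence $\delta(h)\rho_{k(h)} \leq \sqrt{\delta(h)} \to 0$, and simultaneously $\delta(h)\gamma_{k(h)} \leq 1$ does not itself force $\gamma_{k(h)}^{-1} \to 0$ — but that last is guaranteed separately by $k(h)\to\infty$. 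With $\tilde G$ still nondecreasing and unbounded, all three arguments above go through verbatim. I expect the write-up to consist mostly of verifying that $\tilde G$ is unbounded and that the rescaled stopping index still satisfies $k(h) \to \infty$, both of which are short.
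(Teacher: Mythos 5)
Your construction of the stopping index is, up to cosmetics, exactly the paper's argument: the paper writes $H_i = \delta(h)A_i + \delta(h)^2 B_i$ with $A_i,B_i$ independent of $h$, picks $C$ large enough that the first partial sum is admissible for $h=h_{\max}$, defines $k(h)$ as the largest admissible index, and obtains $k(h)\to\infty$ by contradiction from $\delta(h)\to 0$. You fold $\delta(h)^2\le\delta(h)\delta(h_{\max})$ into a single nondecreasing function $G$ and argue directly; same mechanism. (Minor point: divergence of $G$ does not hinge on $\gamma_k\to\infty$ — already $A_i\ge\rho_i/\alpha_i\ge M^{-3/2}$ with $M=\sup_k\alpha_k$, so each summand is bounded below by a positive constant.)

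For the final claim the paper says only that convergence is ``a direct consequence of Theorem \ref{thm:convergence_SC}'', so your elaboration goes beyond what is written, and it is there that the problems sit. Your concern about $\rho_{k(h)}\delta(h)$ is legitimate and your fix (augmenting the stopping rule so that $\delta(h)\rho_{k(h)}^2$ stays bounded, whence $\delta(h)\rho_{k(h)}\le c\sqrt{\delta(h)}\to 0$) is sound; note also that the paper's own rule already gives $\delta(h)^2\rho_{k(h)}^2/\alpha_{k(h)}^2\le H_{k(h)}\le C$, hence boundedness of $\delta(h)\rho_{k(h)}$ for free, but not the limit $0$. The genuine flaw is your treatment of $(III)=\|u_{k(h)}^\tex-u^\dagger\|$: you invoke Theorem \ref{thm:SC_strong_conv}, but that theorem concerns the iterates of the exact Algorithm \ref{alg:MinEx}, whose subgradients satisfy $\lambda_k\in\partial J(u_k)$, whereas your $u_k^\tex$ solves the subproblem driven by the inexact, discrete subgradient $\lambda_{k-1}^\tin$, to which that theorem does not apply. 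The only available bound is the weighted-sum estimate from the proof of Theorem \ref{thm:convergence_SC}, $\sum_{i\le k}\alpha_i^{-1}\|u_i^\tex-u^\dagger\|^2\le c(1+\sum_{i\le k}H_i)$, which by itself yields only $\min_{i\le k(h)}\|u_i^\tex-u^\dagger\|\to 0$, not convergence of the last iterate. To get the stated last-iterate convergence one must return to the per-step telescoped inequality (bounding $\alpha_{k+1}^{-1}\|u_{k+1}^\tex-u^\dagger\|^2$ by $\tfrac12\|v_k^\tin-w\|_Y^2-\tfrac12\|v_{k+1}^\tin-w\|_Y^2$ plus controlled remainders, with $\|v_k^\tin-w\|_Y$ bounded); this step is left open both in your write-up and in the paper's one-line remark.
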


\begin{proof}
	We only have to show the existence of such a stopping index. The convergence result then is a direct consequence of Theorem \ref{thm:convergence_SC}. Let us define the following auxiliary variables
	\begin{align*}
	A_i &:=\frac{\rho_{i}}{\alpha_i} + \frac{\gamma_{i-1}}{\alpha_i} + \frac{\gamma_{i-1} \rho_i}{\alpha_i},\\
	B_i &:= \frac{\rho_i^2}{\alpha_i^2} +\frac{\rho_i^2}{\alpha_i} .
	\end{align*}
	It is clear that $A_k, B_k \to \infty$ as $k \to \infty$. Now choose $C>0$ sufficiently large such that
	\[
	\delta(h_{\max}) A_1 + \delta(h_{\max})^2 B_1 \leq C.
	\]
	Now pick $0< h \leq h_{\max}$. Since $\delta: (0, \infty) \to \R$ is a monotonically increasing function function we get the existence of $\tilde k \in \N$, $\tilde k \geq 1$ such that
	\[
	\sum\limits_{i=1}^{\tilde k} H_i \leq C.
	\]
  Hence, the following expression is well-defined
	\[
	k(h) := \max \left\{  k \in \N: \;  \sum\limits_{i=1}^{k} H_i \leq C \right\}.
	\]
	It is left to show that $k(h) \to \infty$ as $h \to 0$. Assume that this is wrong, hence there exists a $n \in \N$ such that $k(h) < n$ for all $h > 0$.  This yields
	\[
	\sum\limits_{i=1}^{ n } H_i = \delta(h)  \sum\limits_{i=1}^{ n } A_i +  \delta(h)^2  \sum\limits_{i=1}^{ n } B_i> C \quad \forall \; 0 < h \leq h_{\max}.
	\]
	However, since $A_i$ and $B_i$ are independent from $h$ this is a contradiction for $h$ small enough. This finishes the proof.
\end{proof}

If the disretized subproblem is only solved inexactly we can establish the following result. The proof is a combination of Corollary \ref{cor:SC_1} and Corollary \ref{cor:SC_2}.

\begin{corollary}
Let $u^\dagger$ satisfy Assumption \ref{ass:SC} and let $(\eps_k)_k$ be a sequence of positive real numbers such that
\[
\sum\limits_{i=1}^\infty R_i < \infty.
\]
Let $h> 0$ be given. Then there exists a constant C such that  for every $0 < h \leq h_{\max}$ there exists a stopping index $k(h)$ such that
\[
\sum\limits_{i=1}^{k(h)} H_i \leq C < \infty.
\]
and $k(h) \to \infty$ as $h \to 0$. Furthermore $u_{k(h)}^\tin \to u^\dagger$ as $h \to 0$.
\end{corollary}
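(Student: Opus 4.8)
The plan is to follow exactly the structure already established in Corollary \ref{cor:SC_1} and Corollary \ref{cor:SC_2}, since the main statement is a straightforward combination: it permits both a nontrivial accuracy sequence $(\eps_k)_k$ \emph{and} a nonzero discretization parameter $h$. The starting point is the master estimate of Theorem \ref{thm:convergence_SC}, namely
\[
\sum_{i=1}^k \frac{1}{\alpha_i}\|u_i^\tin - u^\dagger\|^2 \leq c\left(1 + \sum_{i=1}^k R_i + \sum_{i=1}^k H_i\right),
\]
which splits the error budget into a part controlled purely by the accuracy sequence ($R_i$) and a part controlled purely by the discretization ($H_i$). The assumption $\sum_{i=1}^\infty R_i < \infty$ takes care of the first sum once and for all, uniformly in $k$. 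So the entire task reduces to handling the second sum.

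First I would recall, as in the proof of Corollary \ref{cor:SC_2}, that $H_i = \delta(h)A_i + \delta(h)^2 B_i$ with $A_i := \rho_i/\alpha_i + \gamma_{i-1}/\alpha_i + \gamma_{i-1}\rho_i/\alpha_i$ and $B_i := \rho_i^2/\alpha_i^2 + \rho_i^2/\alpha_i$, where $A_i, B_i$ are independent of $h$ and the partial sums $\sum_{i=1}^k A_i$, $\sum_{i=1}^k B_i$ diverge as $k\to\infty$. Then I would fix $h_{\max}>0$, choose $C>0$ large enough that $\delta(h_{\max})A_1 + \delta(h_{\max})^2 B_1 \leq C$, and, using monotonicity of $\delta$ together with $\delta(0)=0$, define the stopping index
\[
k(h) := \max\left\{k \in \N : \sum_{i=1}^k H_i \leq C\right\},
\]
which is well-defined and finite for each $0 < h \leq h_{\max}$. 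The proof that $k(h)\to\infty$ as $h\to 0$ is verbatim the contradiction argument of Corollary \ref{cor:SC_2}: if $k(h) < n$ for all $h$, then $\delta(h)\sum_{i=1}^n A_i + \delta(h)^2\sum_{i=1}^n B_i > C$ for all such $h$, contradicting $\delta(h)\to 0$ since $\sum_{i=1}^n A_i$, $\sum_{i=1}^n B_i$ are $h$-independent constants.

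Combining the two bounds, for the stopped sequence we obtain $\sum_{i=1}^{k(h)} \frac{1}{\alpha_i}\|u_i^\tin - u^\dagger\|^2 \leq c(1 + \sum_{i=1}^\infty R_i + C)$, a bound uniform in $h$. To pass from summability against $\alpha_i^{-1}$ to actual convergence $u_{k(h)}^\tin \to u^\dagger$, I would argue as follows: since $\gamma_{k(h)} = \sum_{i=1}^{k(h)}\alpha_i^{-1}\to\infty$ (because $k(h)\to\infty$ and $\sum \alpha_i^{-1}$ diverges — this is implicit in the growth of the $A_i$, $B_i$), the weighted sum being bounded forces a subsequence of $\|u_i^\tin - u^\dagger\|^2$ along which it tends to zero; then the monotone-type control from Theorem \ref{thm:convergence_SC} (or rather, a direct re-derivation of the single-step inequality appearing in its proof, which shows the Lyapunov-like quantity $\frac12\|v_k^\tin - w\|_Y^2$ plus the cumulative error terms is essentially nonincreasing up to the summable remainders) upgrades this to convergence of the whole stopped sequence. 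I expect the main obstacle here to be exactly this last upgrade — extracting genuine convergence of $u_{k(h)}^\tin$ rather than mere boundedness of a weighted sum — and the cleanest route is probably to observe that the proof of Theorem \ref{thm:convergence_SC} actually delivers the stronger one-step recursion, from which $\|v_{k(h)}^\tin - w\|_Y$ is bounded; since $\|v_{k}^\tin - w\|_Y$ controls $\gamma_k^{-1}\lambda_k^\tin$-type averages and hence, via the source condition machinery of \cite{wachsmuth2016}, the control error, convergence follows. Everything else is a direct transcription of the two earlier corollaries, so this statement is essentially a corollary of a corollary, as the paper itself notes.
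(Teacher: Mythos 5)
Your proposal is correct and follows exactly the route the paper takes: the paper's own proof of this corollary is the single sentence that it is a combination of Corollaries \ref{cor:SC_1} and \ref{cor:SC_2}, and that is precisely what you carry out --- the summability hypothesis on $(R_i)$ disposes of the accuracy terms as in the first corollary, while the construction of the stopping index $k(h)$ and the contradiction argument showing $k(h)\to\infty$ are transcribed verbatim from the second. Your closing discussion of how to upgrade the uniformly bounded weighted sum to convergence of the stopped iterate $u_{k(h)}^\tin$ is in fact more careful than anything the paper offers, since the paper leaves that step entirely implicit.
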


}

\subsection{Convergence under Assumption \ref{ass:ActiveSet}}
Let us now consider the case when Assumption \ref{ass:ActiveSet} is satisfied. 

{

\begin{theorem}\label{thm:convergence_ASC}
Let $u^\dagger$ satisfy Assumption \ref{ass:ActiveSet} and let $(\eps_k)_k$ be a sequence of positive real numbers. Furthermore let $h > 0$ be given and let $(u_k^\tin)_k$ be a sequence generated by Algorithm \ref{alg:MinInEx}. Then we have the estimate
\begin{align*}
\sum\limits_{i=1}^k  \frac{1}{\alpha_{i}}\|u_{i}^\tin - u^\dagger\|^2 \leq c\left( 1+ \sum\limits_{i=1}^k \frac{\gamma_{i-1}^{-\kappa}}{\alpha_i} +  \sum\limits_{i=1}^k R_i +  \sum\limits_{i=1}^k H_i \right)
\end{align*}
with the abbreviations
\begin{align*}
R_i &:= \frac{\eps_i}{\alpha_i}+ \frac{\eps_i^2}{\alpha_i^2} + \frac{\gamma_{i-1} \eps_i}{\alpha_i}  + \frac{\eps_i^2}{\alpha_i},\\
H_i &:= \delta(h)\left( \frac{\rho_i}{\alpha_i}  + \frac{\gamma_{i-1}}{\alpha_i} + \frac{\gamma_{i-1} \rho_i}{\alpha_i}  \right) + \delta(h)^2 \left(  \frac{\rho_i^2}{\alpha_i^2} + \frac{\rho_i^2}{\alpha_i}  \right).
\end{align*}
\end{theorem}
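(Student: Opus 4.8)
The plan is to mirror the structure of the proof of Theorem \ref{thm:convergence_SC}, but to replace the source-condition estimate \eqref{eq:thminex_3} for the term $(u^\dagger, u^\dagger - u_{k+1}^\tex)$ by the sharper argument available under Assumption \ref{ass:ActiveSet}, which splits $\Omega$ into the inactive part $I$ (where the source condition $\chi_I u^\dagger = \chi_I P_\Uad(S^\ast w)$ holds) and the active part $A = \Omega \setminus I$ (where the measure estimate $|\{x\in A:\,0<|p^\dagger(x)|<\eps\}|\leq c\eps^\kappa$ is available). As before, I would begin from the triangular splitting \eqref{eq:triangular}, bound $(I)\leq c\eps_k$ and $(II)\leq c\rho_k\delta(h)$ via Lemma \ref{lemma:subprob_ex_error}, and then concentrate on the regularization error $(III)=\|u_k^\tex - u^\dagger\|$. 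Adding the optimality conditions for $u_{k+1}^\tex$ and $u^\dagger$ again yields \eqref{eq:thminex_1}, so that the whole task is to estimate $(u^\dagger - \lambda_k^\tin,\, u^\dagger - u_{k+1}^\tex)$.

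The term $(-\lambda_k^\tin, u^\dagger - u_{k+1}^\tex)$ is handled exactly as in \eqref{eq:thminex_0}--\eqref{eq:thminex_6}: pass from $S_h$ to $S$ using Lemma \ref{lemma:estimate_lambda_different_meshes} (cost $c\gamma_k\delta(h)$), eliminate $z$ via the variational inequality of Theorem \ref{thm:opt_udagger}, replace $u_{k+1}^\tex$ by $u_{k+1}^\tin$ at the cost $c\gamma_k(\eps_{k+1}+\rho_{k+1}\delta(h))$, and rewrite the main part in terms of $v_k^\tin := \sum_{i=1}^k \alpha_i^{-1} S(u^\dagger - u_i^\tin)$. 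None of this uses the source condition, so it carries over verbatim. For the term $(u^\dagger, u^\dagger - u_{k+1}^\tex)$ I would first pass from $u_{k+1}^\tex$ to $u_{k+1}^\tin$ (again paying $c(\eps_{k+1}+\rho_{k+1}\delta(h))$), and then split $(u^\dagger, u^\dagger - u_{k+1}^\tin) = (\chi_I u^\dagger, u^\dagger - u_{k+1}^\tin) + (\chi_A u^\dagger, u^\dagger - u_{k+1}^\tin)$. On $I$ I use $\chi_I u^\dagger = \chi_I P_\Uad(S^\ast w)$ together with the projection inequality $(S^\ast w - u^\dagger, v - u^\dagger)\leq 0$ restricted to $I$, producing a term $(w, \alpha_{k+1}^{-1}S(u^\dagger - u_{k+1}^\tin))$ plus the $v$-telescoping structure — just as in the SC case. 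On $A$ I use the pointwise optimality of Theorem \ref{thm:opt_udagger}: where $|p^\dagger|>0$ the control $u^\dagger$ is bang-bang, so $|u^\dagger(x) - u_{k+1}^\tin(x)|$ can be pointwise controlled by the deviation of the discrete adjoint-type quantity $\gamma_{k+1}^{-1}\lambda_{k+1}^\tin$ from $p^\dagger$, and combining the measure estimate with Young's inequality (splitting $A$ into $\{|p^\dagger|<\eps\}$ and its complement, optimizing over $\eps$) yields a contribution of the form $c\,\gamma_{k}^{-\kappa}$ — this is precisely the mechanism behind the extra term $\sum_i \alpha_i^{-1}\gamma_{i-1}^{-\kappa}$ in the statement; here I would invoke the corresponding estimate from \cite[Section 4]{wachsmuth2016} adapted to account for the $\eps_k$- and $\delta(h)$-errors already collected in $R_i$ and $H_i$.

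Having assembled all three pieces, I plug the bound for $(u^\dagger - \lambda_k^\tin, u^\dagger - u_{k+1}^\tex)$ into \eqref{eq:thminex_1}, use the identity \eqref{eq:thminex_4} for the telescoping $v$-terms and the Young-type estimate \eqref{eq:thminex_5}--\eqref{eq:tech_est} to absorb $\tfrac12\|v_{k+1}^\tin - v_k^\tin\|_Y^2$ into the $\|S(u_{k+1}^\tex - u^\dagger)\|_Y^2$ term on the left with a strictly positive remaining coefficient $c_\tau$, and finally sum over $k$. The telescoping sum $\tfrac12\|v_0^\tin - w\|_Y^2 = \tfrac12\|w\|_Y^2$ gives the constant, the $\gamma_{i-1}^{-\kappa}$ terms collect into $\sum_i \alpha_i^{-1}\gamma_{i-1}^{-\kappa}$, and the error terms collect into $\sum_i R_i$ and $\sum_i H_i$; the result then follows by the triangular inequality \eqref{eq:triangular}, exactly as in the proof of Theorem \ref{thm:convergence_SC}. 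The main obstacle is the estimate on the active set $A$: one must carefully propagate the already-accumulated inexactness ($\eps_k$) and discretization ($\delta(h)$) errors through the pointwise bang-bang argument and the measure estimate, making sure that the $\eps^\kappa$-optimization still produces a clean $\gamma_{k}^{-\kappa}$ factor rather than something contaminated by $h$- or $\eps$-dependent constants that would spoil the separation of $R_i$ and $H_i$ in the final bound.
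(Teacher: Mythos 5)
Your overall skeleton is right: the proof does follow Theorem \ref{thm:convergence_SC}, reuses \eqref{eq:thminex_0}--\eqref{eq:thminex_6} for the $\lambda_k^\tin$-term, and only changes how $(u^\dagger, u^\dagger - u_{k+1}^\tex)$ is treated. But the mechanism you propose on the active set $A$ is not the one that works here, and it is exactly the step you leave unresolved. The paper does \emph{not} run a pointwise bang-bang argument controlling $|u^\dagger(x)-u_{k+1}^\tin(x)|$ by the deviation of $\gamma_{k+1}^{-1}\lambda_{k+1}^\tin$ from $p^\dagger$. That route is circular in this setting: the rate for $\|\gamma_k^{-1}\lambda_k - p^\dagger\|$ is a \emph{consequence} of the control rates (see the lemma at the end of Section \ref{sec:assumptions}), not an input to them; moreover, for the inexact iterate $u_{k+1}^\tin$ you have no pointwise projection formula, only the residual bound $\mathcal{B}(\alpha_{k+1},\lambda_k^\tin,u_{k+1}^\tin)\leq \eps_{k+1}$, so the claimed pointwise control is not available.

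What the paper actually uses are two variational facts from \cite{wachsmuth2016}. First, \cite[Lemma 4.12]{wachsmuth2016} gives
$(u^\dagger, u^\dagger - u) \leq (S^\ast w, u^\dagger - u) + c\|u^\dagger - u\|_{L^1(A)}$ for all $u\in\Uad$, which packages your $I$/$A$ splitting into a single inequality with an $L^1(A)$ remainder (this is where $S^\ast w \in L^\infty(\Omega)$ enters). Second, and decisively, the improved optimality condition \cite[Lemma 4.11]{wachsmuth2016}, $(-p^\dagger, u - u^\dagger) \geq c_A\|u-u^\dagger\|_{L^1(A)}^{1+1/\kappa}$, is retained in the elimination-of-$z$ step \eqref{eq:eliminate_z} -- precisely where the SC proof merely discards a nonpositive term -- so that a coercive term $c_A\sum_i \frac{\gamma_{i-1}}{\alpha_i}\|u^\dagger - u_i^\tex\|_{L^1(A)}^{1+1/\kappa}$ appears on the left-hand side. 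The $L^1(A)$ remainder from Lemma 4.12 is then traded against this coercive term by Young's inequality with exponents $1+1/\kappa$ and $1+\kappa$, and it is this trade that produces the clean $\sum_i \gamma_{i-1}^{-\kappa}/\alpha_i$ term in the statement; no optimization over level sets $\{|p^\dagger|<\eps\}$ and no $\eps$- or $h$-contamination of the $\kappa$-exponent arises, because the whole argument stays at the level of the exact auxiliary solutions $u_i^\tex$ and the inexactness only enters through the already-established bounds $\|u_i^\tin - u_{i,h}^\tex\|\leq c\eps_i$ and $\|u_{i,h}^\tex - u_i^\tex\|\leq c\rho_i\delta(h)$. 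Without the improved variational inequality your $L^1(A)$ (or pointwise) contribution has nothing to be absorbed into, so the obstacle you flag at the end is a genuine gap in the proposal rather than a technicality.
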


\begin{proof}
The proof mainly follows the idea of Theorem \ref{thm:convergence_SC}. Again the main part is to establish estimates for the regularization error for $u_{k+1}^\tex$. First we want to estimate the term $(u^\dagger, u^\dagger - u)$ using Assumption \ref{ass:ActiveSet}. We use \cite[Lemma 4.12]{wachsmuth2016} and obtain
\begin{equation}\label{eq:tech_1}
(u^\dagger, u^\dagger - u) \leq (S^\ast w, u^\dagger - u) + c\|u^\dagger - u\|_{L^1(A)}, \quad \forall u \in \Uad.
\end{equation}
This inequality introduces an additional $L^1$-term. To compensate this term we use an improved optimality condition, which is valid under Assumption \ref{ass:ActiveSet}
\begin{equation}\label{eq:improved_optimality}
(-p^\dagger, u-u^\dagger) \geq c_A \|u-u^\dagger\|_{L^1(A)}^{1 + \frac{1}{\kappa}}, \quad \forall u \in \Uad,
\end{equation}
with $c_A > 0$. For a proof we refer to \cite[Lemma 4.11]{wachsmuth2016}. Similar to \eqref{eq:eliminate_z} we compute
\begin{align*}
(-\lambda_k^\tin &, u^\dagger - u_{k+1}^\tex) \\
&\leq \sum\limits_{i=1}^k \frac{1}{\alpha_i} (S(u_i^\tin - u^\dagger),S(u^\dagger - u_{k+1}^\tex) )\\
&\quad + \sum\limits_{i=1}^k \frac{1}{\alpha_i} \underbrace{ (Su^\dagger - z, S(u^\dagger - u_{k+1}^\tex))} _{\leq - c_A \|u^\dagger - u_{k+1}^\tex\|_{L^1(A)}^{1 + \frac{1}{\kappa}}} + c \gamma_k \delta(h)\\
&\leq \alpha_{k+1}(-v_k^\tin, v_{k+1}^\tin - v_k^\tin) - c_A \gamma_k \|u^\dagger - u_{k+1}^\tex\|_{L^1(A)}^{1 + \frac{1}{\kappa}}\\
&\quad + c \gamma_k( \eps_{k+1} + \rho_{k+1} \delta(h) ) + c \gamma_k \delta(h).
\end{align*}
Now we estimate the term $(u^\dagger, u^\dagger - u_{k+1}^\tex)$ using Young's inequality
\begin{equation}\label{eq:asc_tech_1}
\begin{split}
\sum\limits_{i=1}^k &\frac{1}{\alpha_i} (u^\dagger, u^\dagger - u_i^\tex) \leq \sum\limits_{i=1}^k \frac{1}{\alpha_i} \big[ (S^\ast w, u^\dagger - u_i^\tex) + c \|u^\dagger - u_i^\tex\|_{L^1(A)}  \big]\\
&= \left( w, \sum\limits_{i=1}^k \frac{1}{\alpha_i} S(u^\dagger - u_i^\tin) \right) + \left(w, \sum\limits_{i=1}^k \frac{1}{\alpha_i} S(u_i^\tin - u_{i,h}^\tex)  \right)\\
&\quad + \left( w, \sum\limits_{i=1}^k \frac{1}{\alpha_i}S(u_{i,h}^\tex - u_i^\tex) \right) + c \sum\limits_{i=1}^k \frac{1}{\alpha_i} \|u^\dagger - u_i^\tex\|_{L^1(A)}\\
&\leq \|w\|_Y^2 + \frac{1}{4}\|v_k^\tin\|_Y^2 + c \sum\limits_{i=1}^k \frac{\eps_i}{\alpha_i} + c \delta(h) \sum\limits_{i=1}^k \frac{\rho_i}{\alpha_i}\\
&\quad + \frac{c_A}{2} \sum\limits_{i=1}^k \frac{\gamma_{i-1}}{\alpha_i} \|u^\dagger - u_i^\tex\|_{L^1(A)}^{1 + \frac{1}{\kappa}} + c \sum\limits_{i=1}^k \frac{\gamma_{i-1}^{-\kappa}}{\alpha_i}.
\end{split}
\end{equation}
We now consider the following inequality, similar to \eqref{eq:thminex_1}
\begin{align*}
\frac{1}{\alpha_{k+1}^2} \|S(u_{k+1} &- u^\dagger)\|_Y^2 + \frac{1}{\alpha_{k+1}}\|u_{k+1}^\tex - u^\dagger\|^2 \leq \frac{1}{\alpha_{k+1}} (u^\dagger , u^\dagger - u_{k+1}^\tex)\\
&\quad + (-v_k^\tin, v_{k+1}^\tin - v_k^\tin) - \frac{c_A \gamma_k}{\alpha_{k+1}} \|u^\dagger - u_{k+1}^\tex\|_{L^1(A)}^{1 + \frac{1}{\kappa}}\\
&\quad + c \frac{\gamma_k \eps_{k+1}}{\alpha_{k+1}} + c \delta(h) \left[  \frac{\gamma_k \rho_{k+1}}{\alpha_{k+1}} + \frac{\gamma_k}{\alpha_{k+1}}  \right],
\end{align*}
and use again the equality
\[
(-v_k^\tin, v_{k+1}^\tin - v_k^\tin) = \frac{1}{2} \|v_k^\tin\|_Y^2 - \frac{1}{2} \|v_{k+1}^\tin\|_Y^2 + \frac{1}{2}\|v_{k+1}^\tin - v_k^\tin\|_Y^2.
\]
As done in Theorem \ref{thm:convergence_SC} we obtain with $\tau > 1$ that
\begin{align*}
\frac{1}{2}\|v_{k+1}^\tin - v_k^\tin\|_Y^2 &\leq \frac{1}{2 \alpha_{k+1}^2} \left( 1+ \frac{1}{\tau} \right) \|S(u_{k+1} \tex - u^\dagger)\|_Y^2\\
&\quad+ \frac{c}{\alpha_{k+1}^2} \left( \eps_{k+1}^2 - \rho_{k+1}^2 \delta(h) \right).
\end{align*}
Combining everything now reveals with some $c_\tau > 0$
\begin{align*}
c_\tau &\sum\limits_{i=1}^k \frac{1}{\alpha_i^2} \|S(u_{k+1}^\tex - u^\dagger)\| + \sum\limits_{i=1}^k \frac{1}{\alpha_i} \|u_{k+1}^\tex - u^\dagger\|^2\\
&\quad+ c_A \sum\limits_{i=1}^k  \frac{\gamma_{i-1}}{\alpha_i} \|u^\dagger - u_{k+1}^\tex\|_{L^1(A)}^{1 + \frac{1}{\kappa}} + \frac{1}{2}\|v_k^\tin\|_Y^2 \leq \sum\limits_{i=1}^k  \frac{1}{\alpha_i} (u^\dagger, u^\dagger - u_i^\tex)\\
&\quad + c \sum\limits_{i=1}^k \left[  \frac{\eps_i^2}{\alpha_i^2} + \frac{\gamma_{i-1} \eps_i}{\alpha_i} \right] + c \delta(h) \sum\limits_{i=1}^k  \left[  \frac{\gamma_{i-1} \rho_i}{\alpha_i} + \frac{\gamma_{i-1}}{\alpha_i} \right]\\
&\quad + c \delta(h)^2 \sum\limits_{i=1}^k  \frac{\rho_i^2}{\alpha_i^2}.
\end{align*}
Now we plug in our estimate \eqref{eq:asc_tech_1} and obtain
\begin{align*}
c_\tau &\sum\limits_{i=1}^k \frac{1}{\alpha_i^2} \|S(u_{k+1}^\tex - u^\dagger)\| + \sum\limits_{i=1}^k \frac{1}{\alpha_i} \|u_{k+1}^\tex - u^\dagger\|^2\\
&\quad+ \frac{c_A}{2} \sum\limits_{i=1}^k  \frac{\gamma_{i-1}}{\alpha_i} \|u^\dagger - u_{k+1}^\tex\|_{L^1(A)}^{1 + \frac{1}{\kappa}} + \frac{1}{4}\|v_k^\tin\|_Y^2 \leq \|w\|_Y^2 + c \sum\limits_{i=1}^k \frac{\gamma_{i-1}^{-\kappa}}{\alpha_i}\\
&\quad + c \sum\limits_{i=1}^k \left[  \frac{\eps_i^2}{\alpha_i^2} + \frac{\gamma_{i-1} \eps_i}{\alpha_i} + \frac{\eps_i}{\alpha_i}\right] + c \delta(h) \sum\limits_{i=1}^k  \left[ \frac{\rho_i}{\alpha_i} + \frac{\gamma_{i-1} \rho_i}{\alpha_i} + \frac{\gamma_{i-1}}{\alpha_i} \right]\\
&\quad + c \delta(h)^2 \sum\limits_{i=1}^k  \frac{\rho_i^2}{\alpha_i^2}.
\end{align*}
As in the proof of Theorem \ref{thm:convergence_SC} we apply triangular inequality to finish the proof.

\end{proof}

Let us now establish convergence results similar to Corollary \ref{cor:SC_1} and \ref{cor:SC_2}.

\begin{corollary}
	Let $u^\dagger$ satisfy Assumption \ref{ass:ActiveSet} and let $(\eps_k)_k$ be a sequence of positive real numbers such that $\gamma_{i-1} \eps_i \to 0$.	Furthermore assume that $S_h = S$ and let $(u_k^\tin)_k$ be a sequence generated by Algorithm \ref{alg:MinInEx}. Then we obtain 
	\[
	\min\limits_{i=1,..,k} \|u_i^\tin - u^\dagger\| \to 0
	\]
	as $k \to \infty$.
\end{corollary}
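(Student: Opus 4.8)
The plan is to invoke Theorem \ref{thm:convergence_ASC} with $S_h = S$, so that $\delta(h) = 0$ and hence $H_i = 0$ for all $i$. This reduces the estimate to
\[
\sum\limits_{i=1}^k \frac{1}{\alpha_i}\|u_i^\tin - u^\dagger\|^2 \leq c\left( 1 + \sum\limits_{i=1}^k \frac{\gamma_{i-1}^{-\kappa}}{\alpha_i} + \sum\limits_{i=1}^k R_i \right),
\]
with $R_i = \frac{\eps_i}{\alpha_i} + \frac{\eps_i^2}{\alpha_i^2} + \frac{\gamma_{i-1}\eps_i}{\alpha_i} + \frac{\eps_i^2}{\alpha_i}$. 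The strategy is then the standard argument: if the right-hand side grows at most like $\gamma_k$ (up to a constant), then since $\sum_{i=1}^k \frac{1}{\alpha_i} = \gamma_k$, a weighted average of $\|u_i^\tin - u^\dagger\|^2$ stays bounded, forcing the minimum over $i=1,\dots,k$ to tend to zero.

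First I would show that each of the three sums on the right is $o(\gamma_k)$, or at least $\mathcal{O}(\gamma_k)$. For the term $\sum_{i=1}^k \gamma_{i-1}^{-\kappa}/\alpha_i$, this is exactly the quantity appearing in Theorem \ref{thm:SC_strong_conv}, where it is already noted (citing \cite{wachsmuth2016}) that $\gamma_k^{-1}\sum_{j=1}^k \alpha_j^{-1}\gamma_j^{-\kappa} \to 0$; the same bound applies with the index shift $\gamma_{i-1}$ in place of $\gamma_i$ since $\gamma_{i-1} \geq \gamma_i - \alpha_i^{-1}$ and the $\alpha_i$ are bounded away from $0$ is not assumed, but one can compare $\gamma_{i-1}$ and $\gamma_i$ using $\gamma_i/\gamma_{i-1} \to 1$ — this needs a small lemma or the boundedness of $(\alpha_k)_k$. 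For the $R_i$ terms, under the hypothesis $\gamma_{i-1}\eps_i \to 0$ one has in particular $\eps_i/\alpha_i = \gamma_{i-1}\eps_i/(\gamma_{i-1}\alpha_i) \to 0$ provided $\gamma_{i-1}\alpha_i$ is bounded below, and $\eps_i^2/\alpha_i^2, \eps_i^2/\alpha_i$ are controlled similarly once $\eps_i \to 0$; the Cesàro-type lemma ``$a_i \to 0 \implies \gamma_k^{-1}\sum_{i=1}^k \alpha_i^{-1} a_i \to 0$'' then gives that each $\gamma_k^{-1}\sum_{i=1}^k R_i \to 0$. The dominant one to control is $\sum \gamma_{i-1}\eps_i/\alpha_i$, which is handled directly by the hypothesis together with the same Cesàro argument.

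With these in hand, dividing by $\gamma_k$ yields
\[
\frac{1}{\gamma_k}\sum_{i=1}^k \frac{1}{\alpha_i}\|u_i^\tin - u^\dagger\|^2 \to 0,
\]
since the right-hand side over $\gamma_k$ tends to $0$. Because $\min_{i=1,\dots,k}\|u_i^\tin - u^\dagger\|^2$ is no larger than the $\gamma_k^{-1}$-weighted average $\gamma_k^{-1}\sum_{i=1}^k \alpha_i^{-1}\|u_i^\tin - u^\dagger\|^2$, it follows that $\min_{i=1,\dots,k}\|u_i^\tin - u^\dagger\| \to 0$ as $k \to \infty$.

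The main obstacle I expect is the bookkeeping needed to pass from the hypothesis $\gamma_{i-1}\eps_i \to 0$ to the boundedness of $\gamma_k^{-1}\sum_{i=1}^k R_i$; this requires either an extra tacit assumption (such as $(\alpha_k)_k$ bounded and bounded away from zero, which would make $\gamma_k \sim k$ and $\gamma_{i-1}\alpha_i$ comparable to $i$) or a more careful comparison lemma. The remaining pieces — the Cesàro averaging lemma and the comparison of $\gamma_{i-1}$ with $\gamma_i$ — are routine and already essentially present in \cite{wachsmuth2016}.
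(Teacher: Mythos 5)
Your proposal is correct and follows essentially the same route as the paper: set $H_i=0$ since $S_h=S$, apply the Ces\`aro-type averaging lemma of \cite[Lemma 3.5]{wachsmuth2016} to $\sum_i \gamma_{i-1}^{-\kappa}/\alpha_i$ and to $\sum_i R_i$, and bound $\min_{i\le k}\|u_i^\tin-u^\dagger\|^2$ by the $\gamma_k^{-1}$-weighted average. The auxiliary condition you flag ($\gamma_{i-1}\alpha_i$ bounded below) is not needed: the paper uses only the standing assumption that $(\alpha_k)_k$ is bounded above by some $M$, which gives $\gamma_{i-1}\ge (i-1)/M\ge 1/M$ for $i\ge 2$, so the term $\eps_i/\alpha_i$ in $R_i$ is absorbed into $M\,\gamma_{i-1}\eps_i/\alpha_i$ and the averaging lemma then applies directly with $\gamma_{i-1}\eps_i\to 0$.
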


\begin{proof}
The sequence $(\alpha_k)_k$ is bounded by a constant $M$. Hence we have the following inequalities for $k$ large enough
\[
\sum\limits_{i=2}^k \frac{\gamma_{i-1}}{\alpha_i} \eps_i \geq \frac{1}{M} \sum\limits_{i=2}^k \frac{i-1}{\alpha_i} \eps_i \geq \frac{1}{M} \sum\limits_{i=2}^k \frac{\eps_i}{\alpha_i}.
\]
Furthermore we have by \cite[Lemma 3.5]{wachsmuth2016} that
\[
\gamma_k^{-1}  \sum\limits_{i=1}^k \frac{\gamma_{i-1}^{-\kappa}}{\alpha_i} +  \gamma_k^{-1} \sum\limits_{i=1}^k \frac{\gamma_{i-1}}{\alpha_i} \eps_i \to 0.
\]
We now obtain
\[
\min\limits_{i=1,..,k} \|u_i^\tin - u^\dagger\|  \leq c \left(  \gamma_k^{-1} + \gamma_k^{-1} \sum\limits_{i=1}^k \frac{\gamma_{i-1}^{-\kappa}}{\alpha_i} + \gamma_k^{-1} \sum\limits_{i=1}^k R_i  \right) \to 0,
\]
which finishes the proof.
\end{proof}

\begin{corollary}
	Let $u^\dagger$ satisfy Assumption \ref{ass:ActiveSet}. Let $h_{\max} > 0$ be given and $\eps_k = 0$ for all $k \in \N$. Then there exists a constant C such that  for every $0 < h \leq h_{\max}$ there exists a stopping index $k(h)$ such that
	\[
	\sum\limits_{i=1}^{k(h)} H_i \leq C < \infty.
	\]
	and $k(h) \to \infty$ as $h \to 0$. Furthermore 
	\[
	\min\limits_{i=1,..,k(h)} \|u_i^\tin - u^\dagger\| \to 0
	\]
	as $h \to 0$.
\end{corollary}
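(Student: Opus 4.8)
The plan is to follow the proof of Corollary~\ref{cor:SC_2} almost verbatim for the construction of the stopping index, and then to combine it with the a-priori estimate of Theorem~\ref{thm:convergence_ASC} for the convergence statement. Since $\eps_k = 0$ for all $k$, the quantities $R_i$ appearing in Theorem~\ref{thm:convergence_ASC} vanish identically, and the terms $H_i$ are exactly the same expressions as in Corollary~\ref{cor:SC_2}. Hence the construction of $C$ and of $k(h)$ carries over without change.

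Concretely, first I would set
\[
A_i := \frac{\rho_i}{\alpha_i} + \frac{\gamma_{i-1}}{\alpha_i} + \frac{\gamma_{i-1}\rho_i}{\alpha_i}, \qquad B_i := \frac{\rho_i^2}{\alpha_i^2} + \frac{\rho_i^2}{\alpha_i},
\]
so that $H_i = \delta(h)A_i + \delta(h)^2 B_i$ with $A_k, B_k \to \infty$. Choosing $C > 0$ so large that $\delta(h_{\max})A_1 + \delta(h_{\max})^2 B_1 \le C$, and using that $\delta$ is monotonically increasing with $\delta(0) = 0$, the index
\[
k(h) := \max\Big\{ k \in \N : \ \sum_{i=1}^{k} H_i \le C \Big\}
\]
is well-defined for every $0 < h \le h_{\max}$. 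That $k(h) \to \infty$ as $h \to 0$ follows by the same contradiction argument as in Corollary~\ref{cor:SC_2}: if $k(h) < n$ for a fixed $n$ and all $h$, then $\delta(h)\sum_{i=1}^{n} A_i + \delta(h)^2 \sum_{i=1}^{n} B_i > C$ for all $0 < h \le h_{\max}$, which is impossible once $h$ is small since the two sums are independent of $h$.

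For the convergence statement I would insert $\eps_k = 0$ and $\sum_{i=1}^{k(h)} H_i \le C$ into Theorem~\ref{thm:convergence_ASC}, obtaining
\[
\sum_{i=1}^{k(h)} \frac{1}{\alpha_i}\|u_i^\tin - u^\dagger\|^2 \le c\Big( 1 + C + \sum_{i=1}^{k(h)} \frac{\gamma_{i-1}^{-\kappa}}{\alpha_i} \Big).
\]
Bounding the minimum by the weighted average,
\[
\min_{i=1,\dots,k(h)} \|u_i^\tin - u^\dagger\|^2 \le \gamma_{k(h)}^{-1} \sum_{i=1}^{k(h)} \frac{1}{\alpha_i}\|u_i^\tin - u^\dagger\|^2,
\]
and invoking \cite[Lemma~3.5]{wachsmuth2016} (cf.\ the remark after Theorem~\ref{thm:SC_strong_conv}), which gives $\gamma_k^{-1}\big(1 + \sum_{i=1}^{k} \gamma_{i-1}^{-\kappa}/\alpha_i\big) \to 0$ as $k \to \infty$, I conclude that $\min_{i=1,\dots,k(h)} \|u_i^\tin - u^\dagger\| \to 0$, because $k(h) \to \infty$ as $h \to 0$ by the first part.

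I do not expect a serious obstacle here: the proof is essentially a transcription of Corollaries~\ref{cor:SC_1}--\ref{cor:SC_2}, all analytic ingredients being already in place (the a-priori bound of Theorem~\ref{thm:convergence_ASC}, the summability of $H_i$ along $k(h)$, and the decay of the weighted average under Assumption~\ref{ass:ActiveSet}). The only points that require care are formal ones: keeping the usual convention for the $i=1$ term $\gamma_{0}^{-\kappa}/\alpha_1$ as in \cite{wachsmuth2016}, and noting that under Assumption~\ref{ass:ActiveSet}, unlike under Assumption~\ref{ass:SC}, only the best iterate $\min_{i\le k(h)}\|u_i^\tin - u^\dagger\|$ is controlled rather than $u_{k(h)}^\tin$ itself, which is why the statement is phrased with a minimum.
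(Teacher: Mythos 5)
Your proposal is correct and follows exactly the route the paper intends: the paper's own proof consists only of the remark that it is ``very similar to the proof of Corollary \ref{cor:SC_2}'', and your write-up supplies precisely those details (the $A_i$, $B_i$ splitting of $H_i$, the definition of $k(h)$, the contradiction argument for $k(h)\to\infty$) together with the weighted-average/minimum bound and the decay $\gamma_k^{-1}\bigl(1+\sum_{i=1}^k \gamma_{i-1}^{-\kappa}/\alpha_i\bigr)\to 0$ already used in the preceding corollary. No gaps.
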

\begin{proof}
The proof is very similar to the proof of Corollary \ref{cor:SC_2}.
\end{proof}

A combination of both results yields the following corollary.
\begin{corollary}\label{cor:ASC_1}
	Let $u^\dagger$ satisfy Assumption \ref{ass:ActiveSet} and let $(\eps_k)_k$ be a sequence of positive real numbers such that $\gamma_{i-1} \eps_i \to 0$.	Let $h> 0$ be given.  Then there exists a constant C such that  for every $0 < h \leq h_{\max}$ there exists a stopping index $k(h)$ such that
	\[
	\sum\limits_{i=1}^{k(h)} H_i \leq C < \infty.
	\]
	and $k(h) \to \infty$ as $h \to 0$. Furthermore 
	\[
	\min\limits_{i=1,..,k(h)} \|u_i^\tin - u^\dagger\| \to 0
	\]
	as $h \to 0$.
\end{corollary}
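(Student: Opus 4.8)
The plan is to merge the two corollaries immediately preceding this one: the stopping-index construction originating in Corollary~\ref{cor:SC_2} (used there for $\eps_k=0$) is combined with the division-by-$\gamma_k$ argument that gave the convergence of $\min_{i\le k}\|u_i^\tin-u^\dagger\|$ in the first of the two. The common starting point is the a-priori bound of Theorem~\ref{thm:convergence_ASC},
\begin{equation*}
\sum_{i=1}^k \frac{1}{\alpha_i}\|u_i^\tin - u^\dagger\|^2 \le c\Big( 1 + \sum_{i=1}^k \frac{\gamma_{i-1}^{-\kappa}}{\alpha_i} + \sum_{i=1}^k R_i + \sum_{i=1}^k H_i \Big),
\end{equation*}
valid for every $h>0$ and every $k\in\N$.

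First I would fix the stopping index exactly as in the proof of Corollary~\ref{cor:SC_2}. Writing $H_i = \delta(h) A_i + \delta(h)^2 B_i$ with $A_i := \rho_i/\alpha_i + \gamma_{i-1}/\alpha_i + \gamma_{i-1}\rho_i/\alpha_i$ and $B_i := \rho_i^2/\alpha_i^2 + \rho_i^2/\alpha_i$, both independent of $h$, I would pick $C>0$ with $\delta(h_{\max}) A_1 + \delta(h_{\max})^2 B_1 \le C$ and set $k(h) := \max\{ k\in\N : \sum_{i=1}^k H_i \le C\}$. Since $\delta$ is continuous, monotonically increasing and $\delta(0)=0$, and since $A_i$ and $B_i$ do not depend on $h$, the contradiction argument of Corollary~\ref{cor:SC_2} carries over verbatim: $k(h)$ is well-defined, $\sum_{i=1}^{k(h)} H_i \le C$, and $k(h)\to\infty$ as $h\to0$.

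Next I would substitute $k=k(h)$ into the bound of Theorem~\ref{thm:convergence_ASC}, use $\sum_{i=1}^{k(h)} H_i \le C$, and divide by $\gamma_{k(h)}$, invoking $\min_{i\le k}\|u_i^\tin - u^\dagger\|^2 \le \gamma_k^{-1}\sum_{i=1}^k \alpha_i^{-1}\|u_i^\tin - u^\dagger\|^2$, to obtain
\begin{equation*}
\min_{i=1,\dots,k(h)}\|u_i^\tin - u^\dagger\|^2 \le c\,\gamma_{k(h)}^{-1}\Big( 1 + C + \sum_{i=1}^{k(h)} \frac{\gamma_{i-1}^{-\kappa}}{\alpha_i} + \sum_{i=1}^{k(h)} R_i \Big).
\end{equation*}
Then I would let $h\to0$. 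Since $k(h)\to\infty$ we also have $\gamma_{k(h)}\to\infty$, so that $\gamma_{k(h)}^{-1}(1+C)\to0$; the term $\gamma_k^{-1}\sum_{i=1}^k \gamma_{i-1}^{-\kappa}\alpha_i^{-1}$ tends to $0$ by \cite[Lemma~3.5]{wachsmuth2016}; and $\gamma_k^{-1}\sum_{i=1}^k R_i\to0$ follows exactly as in the proof of the preceding corollary, using that $(\alpha_k)_k$ is bounded by some $M$ and that $\gamma_{i-1}\eps_i\to0$ to estimate $R_i$ by a constant multiple of $\gamma_{i-1}\eps_i/\alpha_i$ and then applying \cite[Lemma~3.5]{wachsmuth2016} once more. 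Combining these three limits yields $\min_{i=1,\dots,k(h)}\|u_i^\tin - u^\dagger\|\to0$ as $h\to0$, which is the claim.

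The step I expect to be the main obstacle is the control of $\gamma_{k(h)}^{-1}\sum_{i=1}^{k(h)} R_i$: one has to combine the boundedness of $(\alpha_k)_k$ with the decay hypothesis $\gamma_{i-1}\eps_i\to0$ to reduce the terms of $R_i$ to the single averaged-summable quantity $\gamma_{i-1}\eps_i/\alpha_i$ that \cite[Lemma~3.5]{wachsmuth2016} handles. Everything else is a direct superposition of the stopping-index construction of Corollary~\ref{cor:SC_2} with the a-priori estimate of Theorem~\ref{thm:convergence_ASC}, so I do not anticipate further difficulties.
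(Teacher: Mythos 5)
Your proposal is correct and is exactly the argument the paper intends: the paper gives no separate proof here, stating only that the corollary follows by ``a combination of both results,'' i.e., the stopping-index construction of Corollary~\ref{cor:SC_2} together with the averaging argument (division by $\gamma_k$, reduction of $R_i$ to $\gamma_{i-1}\eps_i/\alpha_i$ via the boundedness of $(\alpha_k)_k$, and \cite[Lemma~3.5]{wachsmuth2016}) used for the first \ref{ass:ActiveSet} corollary. Your handling of the $R_i$ terms mirrors the paper's own treatment, so no further work is needed.
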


}

\section{{Numerical example}}\label{sec:numerics}

Now, let $Sy=u$ be defined as the (weak) solution of the linear partial differential equation {for a convex set $\Omega \subset \R^n$ ($n=2,3$)}
\begin{equation}\label{eq:PDE}
\begin{alignedat}{3}
 -\Delta y &= u \;\; &&\text{in } \Omega,\\
 y&=0 &&\text{on } \partial \Omega.
\end{alignedat}
\end{equation}
{Let us show that this example fit into our framework. Clearly, for $u \in L^2(\Omega)$ equation \eqref{eq:PDE} has a unique weak solution $y \in H_0^1(\Omega)$, and the associated solution operator $S$ is linear and continuous. For the choice $Y = L^2(\Omega) $ we obtain $S^\ast = S$.\\
	
Let us now report on the discretization and the operator $S_h$. We follow the argumentation and results presented in \cite[Section 3]{wachsmuth2013adaptive}. Let $\mathcal{T}_h$ be a regular mesh which consists of closed cells $T$. For $T \in \mathcal{T}_h$ we define $h_T := \text{diam } T$. Furthermore we set $h := \max_{T \in \mathcal{T}_h} h_T$. We assume that there exists a constant $R > 0$ such that $\frac{h_T}{R_T} \leq R$ for all $T \in \mathcal{T}$. Here  we define $R_T$ to be the diameter of the largest ball contained in $T$.\\
For this mesh $\mathcal{T}$ we define an associated finite dimensional space $Y_h \subset H_0^1(\Omega)$, such that the restriction of a function $v \in Y_h$ to a cell $T \in \mathcal{T}$ is a linear polynomial.\\
The operator $S_h$ is now defined in the sense of weak solutions. We set $y_h := S_h u$ if $y_h \in Y_h$ solves
\[
\int\limits_{\Omega} \nabla y_h \cdot \nabla v_h \; dx = \int\limits_{\Omega} u \cdot v_h \; dx \quad \forall v_h \in Y_h.
\]
We also obtain $S_h^\ast = S_h$ in the discrete case. Let us now mention that the operator $S_h$ satisfy Assumption \ref{ass:operator_Sh}. Following \cite{wachsmuth2013adaptive} and the references therein we obtain the following result.

\begin{lemma}
Assume that there exists a constant $C_M > 1$ such that $\max\limits_{T \in \mathcal{T}_h} h_T \leq C_M \min\limits_{T \in \mathcal{T}_h}  h_T$ holds. Then we have the estimates
\begin{align*}
\|(S-S_h) f\|_{L^2(\Omega)} &\leq c h^2 \|f\|_{L^2(\Omega)},\\
\|(S^\ast - S_h^\ast) f\|_{L^\infty(\Omega)} & \leq c h^{2 - n/2} \|f\|_{L^2(\Omega)},
\end{align*}
for $f \in L^2(\Omega)$ and a constant $c$ independent from $f$ and $h$.
\end{lemma}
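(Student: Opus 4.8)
The plan is to obtain both estimates from three classical finite element ingredients: $H^2$-regularity of the Dirichlet--Laplace operator on the convex domain $\Omega$, the interpolation error estimates for piecewise linear elements on the shape-regular mesh $\mathcal{T}_h$, and (only for the $L^\infty$-bound) a global inverse inequality, which is where the quasi-uniformity hypothesis $\max_{T} h_T \le C_M \min_T h_T$ enters. Write $y := Sf$ and $y_h := S_h f$, and let $I_h$ denote the nodal interpolation operator onto $Y_h$. Since $Y = L^2(\Omega)$ gives $S^\ast = S$ and $S_h^\ast = S_h$, it suffices to prove $\|(S - S_h)f\|_{L^2(\Omega)} \le c h^2 \|f\|_{L^2(\Omega)}$ and $\|(S - S_h)f\|_{L^\infty(\Omega)} \le c h^{2-n/2} \|f\|_{L^2(\Omega)}$.

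For the $L^2$-estimate I would first recall that convexity of $\Omega$ yields $y \in H^2(\Omega) \cap H_0^1(\Omega)$ with $\|y\|_{H^2(\Omega)} \le c\|f\|_{L^2(\Omega)}$. C\'ea's lemma combined with the cellwise interpolation estimate $\|v - I_h v\|_{H^1(T)} \le c h_T |v|_{H^2(T)}$, with a constant controlled only by the shape-regularity constant $R$, gives $\|y - y_h\|_{H^1(\Omega)} \le c\|y - I_h y\|_{H^1(\Omega)} \le c h\|y\|_{H^2(\Omega)} \le c h\|f\|_{L^2(\Omega)}$. Then I run the Aubin--Nitsche duality argument: let $w \in H^2(\Omega)\cap H_0^1(\Omega)$ solve $-\Delta w = y - y_h$, so that $\|w\|_{H^2(\Omega)} \le c\|y-y_h\|_{L^2(\Omega)}$, and use Galerkin orthogonality to obtain
\begin{align*}
\|y - y_h\|_{L^2(\Omega)}^2 &= \int_\Omega \nabla(y-y_h)\cdot\nabla(w - I_h w)\,dx\\
&\le \|y-y_h\|_{H^1(\Omega)}\,\|w - I_h w\|_{H^1(\Omega)}\\
&\le c h\,\|y-y_h\|_{H^1(\Omega)}\,\|w\|_{H^2(\Omega)}\\
&\le c h\,\|y-y_h\|_{H^1(\Omega)}\,\|y-y_h\|_{L^2(\Omega)}.
\end{align*}
Dividing by $\|y-y_h\|_{L^2(\Omega)}$ and inserting the $H^1$-bound yields $\|y-y_h\|_{L^2(\Omega)} \le c h^2 \|f\|_{L^2(\Omega)}$.

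For the $L^\infty$-estimate I split $(S-S_h)f = (y - I_h y) + (I_h y - y_h)$. Since $n \in \{2,3\}$ the Sobolev embedding $H^2(\Omega)\hookrightarrow C(\bar\Omega)$ is available together with the cellwise estimate $\|v - I_h v\|_{L^\infty(T)} \le c h_T^{2-n/2}|v|_{H^2(T)}$ (a scaling/Bramble--Hilbert argument on the reference cell); taking the maximum over $T\in\mathcal{T}_h$ gives $\|y - I_h y\|_{L^\infty(\Omega)} \le c h^{2-n/2}\|y\|_{H^2(\Omega)} \le c h^{2-n/2}\|f\|_{L^2(\Omega)}$. For the discrete part I use the global inverse inequality $\|v_h\|_{L^\infty(\Omega)} \le c h^{-n/2}\|v_h\|_{L^2(\Omega)}$ for $v_h\in Y_h$, which is valid precisely because the mesh is quasi-uniform under the hypothesis on $C_M$ (together with shape-regularity); applying it to $v_h = I_h y - y_h$ and bounding $\|I_h y - y_h\|_{L^2(\Omega)} \le \|I_h y - y\|_{L^2(\Omega)} + \|y - y_h\|_{L^2(\Omega)} \le c h^2 \|f\|_{L^2(\Omega)}$ (using the $L^2$-estimate just proved and the $L^2$-interpolation estimate), I get $\|I_h y - y_h\|_{L^\infty(\Omega)} \le c h^{-n/2}\cdot c h^2 \|f\|_{L^2(\Omega)} = c h^{2-n/2}\|f\|_{L^2(\Omega)}$. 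Adding the two contributions finishes the proof.

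Each of the three ingredients is classical (see the references in \cite{wachsmuth2013adaptive}), so the genuine work is the bookkeeping of hypotheses: $H^2$-regularity needs the convexity of $\Omega$, the interpolation constants must depend only on $R$ and not on individual cells, and the global inverse inequality may be used only under the stated quasi-uniformity condition on $C_M$. I expect the step most prone to subtlety to be the $L^\infty$-interpolation estimate, since it relies on $H^2(T)\hookrightarrow C(\bar T)$ — which is why the restriction $n\le 3$ appears — and the scaling exponent $2-n/2$ must be tracked carefully when passing to and from the reference cell.
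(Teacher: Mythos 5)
Your proof sketch is correct: the combination of $H^2$-regularity on the convex domain, C\'ea's lemma with Aubin--Nitsche duality for the $L^2$-bound, and the interpolation estimate plus the quasi-uniform inverse inequality for the $L^\infty$-bound is exactly the classical argument, and you correctly identify that the hypothesis on $C_M$ is needed only for the inverse inequality while shape-regularity alone controls the interpolation constants. The paper itself does not prove this lemma but merely cites \cite{wachsmuth2013adaptive} and the references therein, so your argument fills in precisely the standard proof those references contain.
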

Hence Assumption \ref{ass:operator_Sh} is satisfied with $\delta(h) = ch^2$. \\

Let us quickly resort on the computation of the solution of \eqref{eq:subprob_abstract}. In \cite[Section 4]{poerner2016b} we applied a variational discretization and a semi-smooth Newton solver to this problem. The space $Y_h$ was defined as the span of linear finite elements. This gives us approximate solutions $(u_h, y_h, p_h)$ such that $y_h := S_h u_h \in Y_h$, $p_h := S_h^\ast (z - y_h) \in Y_h$ and $u_h \in \Uad$. Here the control $u_h$ can be computed as the truncation of a finite element. For more details we refer to \cite{poerner2016b,hinze2012,beuchler2012} and the references therein.

We now consider the following optimal control problem. Note that due to the linearity of $S$ this is of  form \eqref{eq:main_problem}.}
\begin{equation}\label{eq:test_problem}
 \begin{split}
    \text{Minimize} &\quad \frac{1}{2}\|y - z\|^2  \\
    \text{such that} & \quad -\Delta y = u + e_\Omega \quad \text{in } \Omega,\\
    &\quad y=0 \quad \text{on } \partial \Omega,\\
    &\quad u_a \leq u \leq u_b \quad \text{a.e. in } \Omega.
\end{split}
\end{equation}

We use the {inexact Bregman method \ref{alg:MinInEx}} to solve \eqref{eq:test_problem}. With the choice of $\Omega = (0,1)^2$, $u_a = -1$, $u_b = 1$ and
\begin{align*}
p^\dagger(x) &= - \frac{1}{8 \pi^2} \sin(2\pi x) \sin(2\pi y)\\
u^\dagger(x) &= - \sign(p^\dagger(x))\\
y^\dagger(x) &= \sin(\pi x) \sin(\pi y)\\
e_\Omega(x) &= 2 \pi^2 \sin(\pi x) \sin(\pi y) - u^\dagger\\
z(x) &= \sin(\pi x) \sin(\pi y) + \sin(2\pi x) \sin(2\pi y)
\end{align*}
the functions $(u^\dagger, y^\dagger, p^\dagger)$ are a solution to \eqref{eq:test_problem}. Here the solution satisfies assumption \ref{ass:ActiveSet} with $A=\Omega$ and $\kappa = 1$. {We use different  mesh sizes for comparison and plot the error for the first $500$ iterations in Figure \ref{fig:error1}, \ref{fig:error2} and \ref{fig:error3}. Furthermore we set $\alpha_k := 0.1$ and $\eps_k :=  k^{-3/2}$ to satisfy the assumptions of Corollary \ref{cor:ASC_1}. As expected we see that for $h \to 0$ we obtain convergence for $k \to \infty$. The coarsest mesh has $10^2$ and the finest mesh has approximately $10^5$ degrees of freedom.}

\begin{figure}[htbp]
\makebox[\textwidth]{\input{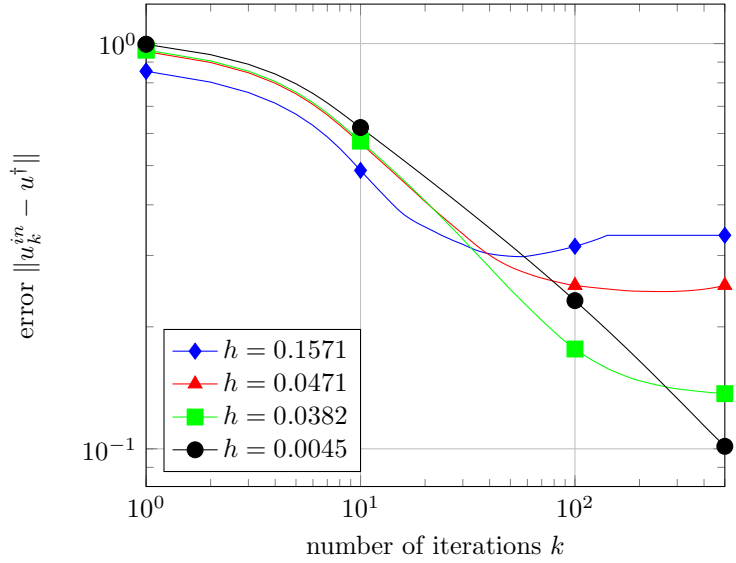} }
\caption{Error $\|u_k^\tin - u^\dagger\|$ for different mesh sizes $h$.}
\label{fig:error1}
\end{figure}

\begin{figure}[htbp]
	\makebox[\textwidth]{\input{bb_y.tikz} }
	\caption{Error $\|y_k^\tin - y^\dagger\|$ for different mesh sizes $h$.}
	\label{fig:error2}
\end{figure}

\begin{figure}[htbp]
	\makebox[\textwidth]{\input{bb_p.tikz} }
	\caption{Error $\|p_k^\tin - p^\dagger\|$ for different mesh sizes $h$.}
	\label{fig:error3}
\end{figure}

\section{Conclusion}\label{sec:conclusion}
We showed that our iterative method is robust against numerical errors. {
Furthermore we established error estimates and convergence result both for errors introduced by the accuracy of the computed iterates and by the discretization. We constructed an a-posteriori error estimator for the discretized subproblem and provided numerical results.\\}
Together with the exact a-priori regularization estimates \cite{wachsmuth2016} and the convergence results obtained for noisy data \cite{poerner2016b}, we conclude that the Bregman iterative method is a stable and robust method to compute solutions for our model problem \eqref{eq:main_problem}.

\bibliographystyle{plain}
\bibliography{literatur}

\begin{thebibliography}{10}

\bibitem{Benfenati2013}
A.~Benfenati and V.~Ruggiero.
\newblock Inexact {B}regman iteration with an application to {P}oisson data
  reconstruction.
\newblock {\em Inverse Problems}, 29(6):065016, 31, 2013.

\bibitem{beuchler2012}
S.~Beuchler, C.~Pechstein, and D.~Wachsmuth.
\newblock Boundary concentrated finite elements for optimal boundary control
  problems of elliptic {PDE}s.
\newblock {\em Comput. Optim. Appl.}, 51(2):883--908, 2012.

\bibitem{Bregman67}
L.~M. Bregman.
\newblock The relaxation method of finding the common point of convex sets and
  its application to the solution of problems in convex programming.
\newblock {\em Ussr Computational Mathematics and Mathematical Physics},
  7:200--217, 1967.

\bibitem{burger2007}
M.~Burger, E.~Resmerita, and L.~He.
\newblock Error estimation for {B}regman iterations and inverse scale space
  methods in image restoration.
\newblock {\em Computing}, 81(2-3):109--135, 2007.

\bibitem{chaventkunisch94}
G.~Chavent and K.~Kunisch.
\newblock Convergence of {T}ikhonov regularization for constrained ill-posed
  inverse problems.
\newblock {\em Inverse Problems}, 10(1):63--76, 1994.

\bibitem{hinze2012}
K.~Deckelnick and M.~Hinze.
\newblock A note on the approximation of elliptic control problems with
  bang-bang controls.
\newblock {\em Comput. Optim. Appl.}, 51(2):931--939, 2012.

\bibitem{eckstein98}
J.~Eckstein.
\newblock Approximate iterations in {B}regman-function-based proximal
  algorithms.
\newblock {\em Math. Programming}, 83(1, Ser. A):113--123, 1998.

\bibitem{engl1996}
H.~W. Engl, M.~Hanke, and A.~Neubauer.
\newblock {\em Regularization of inverse problems}, volume 375 of {\em
  Mathematics and its Applications}.
\newblock Kluwer Academic Publishers Group, Dordrecht, 1996.

\bibitem{gueler1991}
O.~G{\"u}ler.
\newblock On the convergence of the proximal point algorithm for convex
  minimization.
\newblock {\em SIAM J. Control Optim.}, 29(2):403--419, 1991.

\bibitem{hankegroetsch98}
M.~Hanke and C.~W. Groetsch.
\newblock Nonstationary iterated {T}ikhonov regularization.
\newblock {\em J. Optim. Theory Appl.}, 98(1):37--53, 1998.

\bibitem{Hinze2005}
M.~Hinze.
\newblock A variational discretization concept in control constrained
  optimization: The linear-quadratic case.
\newblock {\em Computational Optimization and Applications}, 30(1):45--61,
  2005.

\bibitem{itojin11}
K.~Ito and B.~Jin.
\newblock A new approach to nonlinear constrained {T}ikhonov regularization.
\newblock {\em Inverse Problems}, 27(10):105005, 23, 2011.

\bibitem{obradovic2015}
M.~Ja{\'c}imovi{\'c}, I.~Krni{\'c}, and O.~Obradovi{\'c}.
\newblock On the convergence of a class of the regularization methods for
  ill-posed quadratic minimization problems with constraint.
\newblock {\em Publ. Inst. Math. (Beograd) (N.S.)}, 97(111):89--102, 2015.

\bibitem{kaplan1994stable}
A.~Kaplan and R.~Tichatschke.
\newblock {\em Stable methods for ill-posed variational problems}, volume~3 of
  {\em Mathematical Topics}.
\newblock Akademie Verlag, Berlin, 1994.
\newblock Prox-regularization of elliptic variational inequalities and
  semi-infinite problems.

\bibitem{tichatschke1998b}
A.~Kaplan and R.~Tichatschke.
\newblock Multi-step-prox-regularization method for solving convex variational
  problems.
\newblock {\em Optimization}, 33(4):287--319, 1995.

\bibitem{tichatschke1998}
A.~Kaplan and R.~Tichatschke.
\newblock Proximal point methods and nonconvex optimization.
\newblock {\em J. Global Optim.}, 13(4):389--406, 1998.
\newblock Workshop on Global Optimization (Trier, 1997).

\bibitem{tichatschke2000}
A.~Kaplan and R.~Tichatschke.
\newblock Proximal point approach and approximation of variational
  inequalities.
\newblock {\em SIAM J. Control Optim.}, 39(4):1136--1159 (electronic), 2000.

\bibitem{tichatschke2004}
A.~Kaplan and R.~Tichatschke.
\newblock On inexact generalized proximal methods with a weakened error
  tolerance criterion.
\newblock {\em Optimization}, 53(1):3--17, 2004.

\bibitem{Kiwiel1997}
Krzysztof~C. Kiwiel.
\newblock Proximal minimization methods with generalized {B}regman functions.
\newblock {\em SIAM J. Control Optim.}, 35(4):1142--1168, 1997.

\bibitem{roesch2014}
K.~Kohls, A.~R{\"o}sch, and K.~G. Siebert.
\newblock A posteriori error analysis of optimal control problems with control
  constraints.
\newblock {\em SIAM J. Control Optim.}, 52(3):1832--1861, 2014.

\bibitem{Martinet1970}
B.~Martinet.
\newblock Br\`eve communication. r\'egularisation d'in\'equations
  variationnelles par approximations successives.
\newblock {\em ESAIM: Mathematical Modelling and Numerical Analysis -
  Mod\'elisation Math\'ematique et Analyse Num\'erique}, 4(R3):154--158, 1970.

\bibitem{neubauer1988}
A.~Neubauer.
\newblock Tikhonov-regularization of ill-posed linear operator equations on
  closed convex sets.
\newblock {\em J. Approx. Theory}, 53(3):304--320, 1988.

\bibitem{osher2005}
S.~Osher, M.~Burger, D.~Goldfarb, J.~Xu, and W.~Yin.
\newblock An iterative regularization method for total variation-based image
  restoration.
\newblock {\em Multiscale Model. Simul.}, 4(2):460--489 (electronic), 2005.

\bibitem{poerner2016b}
F.~P{\"o}rner.
\newblock A priori stopping rule for an iterative bregman method for optimal
  control problems.
\newblock {\em Optimization Methods and Software}, 2016.

\bibitem{wachsmuth2016}
F.~P{\"o}rner and D.~Wachsmuth.
\newblock An iterative {B}regman regularization method for optimal control
  problems with inequality constraints.
\newblock {\em Optimization}, 65(12):2195--2215, 2016.

\bibitem{Sabach2010b}
S.~Reich and S.~Sabach.
\newblock Two strong convergence theorems for {B}regman strongly nonexpansive
  operators in reflexive {B}anach spaces.
\newblock {\em Nonlinear Anal.}, 73(1):122--135, 2010.

\bibitem{Rockafellar1976}
R.~T. Rockafellar.
\newblock Monotone operators and the proximal point algorithm.
\newblock {\em SIAM J. Control Optimization}, 14(5):877--898, 1976.

\bibitem{Solodov00}
M.~V. Solodov and B.~F. Svaiter.
\newblock Forcing strong convergence of proximal point iterations in a
  {H}ilbert space.
\newblock {\em Math. Program.}, 87(1, Ser. A):189--202, 2000.

\bibitem{wachsmuth2013}
D.~Wachsmuth.
\newblock Adaptive regularization and discretization of bang-bang optimal
  control problems.
\newblock {\em Electron. Trans. Numer. Anal.}, 40:249--267, 2013.

\bibitem{wachsmuth2011}
D.~Wachsmuth and G.~Wachsmuth.
\newblock Regularization error estimates and discrepancy principle for optimal
  control problems with inequality constraints.
\newblock {\em Control Cybernet.}, 40(4):1125--1158, 2011.

\bibitem{wachsmuth2013adaptive}
Daniel Wachsmuth.
\newblock Adaptive regularization and discretization of bang-bang optimal
  control problems.
\newblock {\em Electron. Trans. Numer. Anal.}, 40:249--267, 2013.

\bibitem{wachsmuth2011b}
G.~Wachsmuth and D.~Wachsmuth.
\newblock Convergence and regularization results for optimal control problems
  with sparsity functional.
\newblock {\em ESAIM Control Optim. Calc. Var.}, 17(3):858--886, 2011.

\end{thebibliography}

\end{document}